\newtheorem{theorem}{Theorem}
\newtheorem{corollary}[theorem]{Corollary}
\newtheorem{definition}[theorem]{Definition}
\newtheorem{lemma}[theorem]{Lemma}
\newtheorem{proposition}[theorem]{Proposition}
\newtheorem{remark}[theorem]{Remark}
\newenvironment{proof}[1][Proof]{\textbf{#1.} }{\ \rule{0.5em}{0.5em}}
\newcommand{\catname}[1]{\mathcal{#1}}
\newcommand{\catA}{\catname{A}}
\newcommand{\catB}{\catname{B}}
\newcommand{\catC}{\catname{C}}
\newcommand{\baseS}{\catname{S}}             
\newcommand{\Fin}{\mathsf{Fin}}              
\newcommand{\Sk}{\mathsf{Sk}}                
\newcommand{\thT}{\mathbb{T}}                
\newcommand{\thU}{\mathbb{U}}                
\newcommand{\thone}{\mathrm{1\!\!1}}         
\newcommand{\thob}{\mathbb{O}}               
\newcommand{\skext}{\subset}                 
\newcommand{\skeqext}{\Subset}               
\newcommand{\skhom}{\lessdot}              
\newcommand{\skhommap}{\gtrdot}               
\newcommand{\skdiag}{\sim}                   
\newcommand{\Mod}[2]{{#1\text{-}\mathbf{Mod}(#2)}}  
\newcommand{\Mods}[2]{{#1\text{-}\mathbf{Mod}_{s}(#2)}}  
\newcommand{\cod}{\mathop{\mathsf{cod}}}
\newcommand{\dom}{\mathop{\mathsf{dom}}}
\newcommand{\Id}{\mathop{\mathsf{Id}}}
\newcommand{\id}{\mathsf{id}}                
\newcommand{\cons}{\mathop{\mathsf{cons}}}   
\newcommand{\AU}{\mathbf{AU}}               
\newcommand{\AUpres}[1]{\AU\langle #1 \rangle} 
\newcommand{\Con}{\mathfrak{Con}}           
\newcommand{\pt}{\mathrm{pt}}               
\newcommand{\Loc}{\mathbf{Loc}}             
\newcommand{\Top}{\mathbf{Top}}             
\newcommand{\Cat}{\mathbf{Cat}}             
\newcommand{\CAT}{\mathfrak{CAT}}           
\newcommand{\List}{\mathop{\mathsf{List}}}  
\newcommand{\qeqobj}{\mathrm{obj}}          
\newcommand{\qeqarr}{\mathrm{arr}}          
\newcommand{\qeqdom}{\mathsf{d}}            
\newcommand{\qeqcod}{\mathsf{c}}            
\newcommand{\qeqid}{\mathsf{id}}            
\newcommand{\qeqcomp}{\circ}                
\newcommand{\qeqt}{1}                 
\newcommand{\qeqtfill}{\mathop{!}\nolimits^{\qeqt}}                 
\newcommand{\qeqprodproj}{\mathsf{p}}       
\newcommand{\qeqproj}[2]{\qeqprodproj_{#1,#2}}
\newcommand{\qeqpb}[2]{\mathsf{P}_{#1,#2}}             
\newcommand{\qeqpbfill}[4]{\left\langle #1, #2 \right\rangle_{#3,#4}} 
\newcommand{\qeqprodfill}[2]{\left\langle #1, #2 \right\rangle} 
\newcommand{\qeqeq}[2]{\mathsf{eq}_{#1,#2}}  
\newcommand{\qeqeqdom}[2]{\mathsf{E}_{#1,#2}}
\newcommand{\qeqi}{0}              
\newcommand{\qeqifill}{\mathop{!}\nolimits^{\qeqi}}                 
\newcommand{\qeqinj}[2]{\mathsf{q}_{#1,#2}} 
\newcommand{\qeqpo}[2]{\mathsf{Q}_{#1,#2}}             
\newcommand{\qeqpofill}[4]{\left[ #1, #2 \right]_{#3,#4}} 
\newcommand{\qequc}{\mathsf{uc}}            
\newcommand{\qeqpostab}[3]{\mathsf{stab}_{#1,#2}(#3)} 
\newcommand{\qeqex}{\mathsf{ex}}            
\newcommand{\qeqle}{\varepsilon}        
\newcommand{\qeqlcons}{\cons}        
\newcommand{\qeql}{\mathsf{List}}           
\newcommand{\qeqlrec}[3]{\mathsf{rec}^{#1}(#2,#3)} 
\newcommand{\skn}{\mathrm{G}^{0}}    
\newcommand{\sknid}{\mathrm{s}}       
\newcommand{\ske}{\mathrm{G}^{1}}    
\newcommand{\skface}{\mathrm{d}}     
\newcommand{\skedom}{\skface_0}     
\newcommand{\skecod}{\skface_1}     
\newcommand{\sktri}{\mathrm{G}^{2}}  
\newcommand{\sktril}{\skface_0}   
\newcommand{\sktrir}{\skface_2}   
\newcommand{\sktric}{\skface_1}   
\newcommand{\skut}{\mathrm{U}^{\qeqt}} 
\newcommand{\skutn}{\mathrm{t}}      
\newcommand{\skupb}{\mathrm{U}^{\mathrm{pb}}} 
\newcommand{\skupbtri}{\Gamma}       
\newcommand{\skui}{\mathrm{U}^{\qeqi}} 
\newcommand{\skuin}{\mathrm{i}}      
\newcommand{\skupo}{\mathrm{U}^{\mathrm{po}}} 
\newcommand{\skupotri}{\Gamma'}       
\newcommand{\skul}{\mathrm{U}^{\mathrm{list}}} 
\newcommand{\skulpb}{\Lambda_2}      
\newcommand{\skult}{\Lambda_0}       
\newcommand{\skule}{\mathrm{e}}      
\newcommand{\skulcons}{\mathrm{c}}   
\newcommand{\extdat}{\mathsf{Dat}}               
\newcommand{\extdatse}{\extdat_{\mathrm{s}\skext}}    
\newcommand{\extdatsee}{\extdat_{\mathrm{s}\skeqext}}  
\newcommand{\seq}[3]{
\xymatrix{
  {#1} \ar@{|-}[r]^-{#2}
  & {#3}
}
 }                                           
\newcommand{\converges}[1]{{#1\!\downarrow}} 
\begin{document}

\title{Sketches for arithmetic universes}
\author{Steven Vickers
        \\School of Computer Science, University of Birmingham
        \\ \texttt{s.j.vickers@cs.bham.ac.uk}}

\maketitle
\begin{abstract}
  A theory of sketches for arithmetic universes (AUs) is developed.

  A restricted notion of sketch, called here \emph{context},
  is defined with the property that every non-strict model is uniquely isomorphic to a
  strict model.
  This allows us to reconcile the syntactic, dealt with strictly using universal algebra,
  with the semantic, in which non-strict models must be considered.

  For any context $\thT$, a concrete construction is given of the AU $\AUpres{\thT}$
  freely generated by it.

  A 2-category $\Con$ of contexts is defined, with a full and faithful 2-functor to the 2-category
  of AUs and strict AU-functors,
  given by $\thT \mapsto \AUpres{\thT}$.
  It has finite pie limits,
  and also all pullbacks of a certain class of ``extension'' maps.
  Every object, morphism or 2-cell of $\Con$ is a finite structure.
\end{abstract}

\section{Introduction}\label{sec:Intro}
This paper arises out of a programme~\cite{TopCat} to use arithmetic universes (AUs)
to provide a predicative and base-free surrogate for Grothendieck toposes as generalized spaces
(and covering also point-free ungeneralized spaces such as locales or formal topologies).

Briefly, a generalized space is presented by a geometric theory $\thT$ that describes
-- as its models -- the points of the space,
and then the classifying topos $\baseS[\thT]$ is a presentation-independent representation
of the space.
In the case of a theory for an ungeneralized space, the topos is the category of sheaves.
In general, it embodies (as its internal logic) the ``geometric mathematics''
generated by a generic model of $\thT$.
In other words, it is the Grothendieck topos presented by $\thT$ as a system
of generators and relations.

Continuous maps (geometric morphisms) can be expressed as models of one theory
in the classifying topos of another
-- this is the universal property of ``classifying topos'' --
and so this also provides a logical account of continuity.
A map from $\thT_1$ to $\thT_2$ is defined by declaring,
``Let $M$ be a model of $\thT_1$,''
and then defining, in that context (in other words, in $\baseS[\thT_1]$,
with $M$ the generic model),
and within the constraints of geometricity, a model of $\thT_2$.
From this point of view one might say that continuity \emph{is} logical geometricity.
See~\cite{Vickers:ContIsGeom} or~\cite{LocTopSp} for a more detailed account of the ideas.

A significant problem in the approach is that the notions of Grothendieck topos
and classifying topos are parametrized by the base topos $\baseS$,
whose objects supply the infinities needed for the infinite disjunctions
needed in geometric logic,
and for the infinite coproducts needed in the category of sheaves
-- for example, to supply a natural numbers object.
Technically, Grothendieck toposes (with respect to $\baseS$) are then
elementary toposes equipped with bounded geometric morphisms to $\baseS$.

The aim of the AU programme is to develop a framework in which spaces, maps
and other constructions can be described in a way that does not depend on any
choice of base topos.
In this ``arithmetic'' logic, disjunctions would all be finite,
but some countable disjunctions could be dealt with by existential
quantification over infinite objects (such as $\mathbb{N}$) defined using the
list objects of AUs.
Thus those infinite disjunctions become an intrinsic part of the logic
-- albeit a logic with aspects of a type theory --
rather than being extrinsically defined by reference to a natural numbers
object in a base topos.

Now suppose a geometric theory $\thT$ can be expressed in this arithmetic way.
We write $\AUpres{\thT}$ for its classifying AU, which stands in for the base-dependent
classifying topos $\baseS[\thT]$.
An AU-functor%
\footnote{For the moment we ignore issues of strictness.}
$h\colon \AUpres{\thT_1} \to \AUpres{\thT_0}$
will, by composition, transform models of $\thT_0$ in any AU into models of $\thT_1$,
and is fruitfully thought of a point-free map between ``spaces of models'' of the two theories.
In particular, for any base topos $\baseS$ with nno,
$h$ will transform the generic model of $\thT_0$ in $\baseS[\thT_0]$ into a model of $\thT_1$
and so induce a geometric morphism from $\baseS[\thT_0]$ to $\baseS[\thT_1]$.
Thus a result expressed using AUs would provide a single statement of
a topos result valid over any base topos with nno.

It is already known that a range of results proved using geometric logic
can in fact be expressed in the setting of AUs.
\cite{ArithInd} develops some techniques for dealing with the fact that AUs
are not cartesian closed in general, nor even Heyting pretoposes.

This would be fully predicative, in that it does not at any point rely on
the impredicative theory of elementary toposes (with their power objects).
Instead of a predicative geometric theory of Grothendieck toposes,
parametrized by an impredicative base elementary topos,
we have a predicative arithmetic logic of AUs that is itself
internalizable in AUs,
and so depends on a predicative ambient logic.
(This internalizability aspect will be seen in, e.g., Section~\ref{sec:ConcreteAUT},
where we give a concrete construction of the AU presented by a context.)

In the present paper we propose a definition of arithmetic theory $\thT$
(our \emph{contexts}) and define a 2-category $\Con$ (Section~\ref{sec:Con})
that deals with the classifying AUs $\AUpres{\thT}$
in an entirely finitary way using presentations.

\begin{itemize}
\item
  The objects are (certain) finite presentations for AUs.
\item
  The collection of objects is rich enough to encompass practical mathematics
  including the real numbers.
\item
  The morphisms and 2-cells are such as to give a full and faithful 2-functor to AUs
  when the presentations are interpreted as the AUs that they present.
\end{itemize}

\emph{Presentations:}
In principle, the quasiequational theories of~\cite{PHLCC} provide a means of
presenting AUs by generators and relations.
However, for various reasons we find it more convenient to use a technique
based on sketches (Section~\ref{sec:AUSk}).
Our ``contexts'' (Section~\ref{sec:Extensions}) are then a restricted form of sketches,
built up by finitely many steps of adjoining objects, morphisms, commutativities,
and ``universals'' (for limit cones, colimit cocones, and list objects).

The main difference from quasiequational presentations is that the contexts do not allow
the possibility of expressing equality between objects,
except when they are either declared
as the same node or constructed by identical universal constructions from equal data.

This restriction is also relevant when it comes to \emph{Strictness:}
The technology of universal algebra relies on the universal constructions such as pullbacks
being interpreted strictly,
since in the algebra they appear as expressions.
As part of this, when one considers AU-functors between the AUs presented by presentations,
it is only the strict AU-functors that can be described exactly in terms of the presentations.

On the other hand, non-strict AU-functors will be important,
particularly in topos applications.
Although every elementary topos with nno is an AU,
and every inverse image functor part of a geometric morphism is an AU-functor,
it is highly unlikely to be strict.

Models of an AU sketch can be interpreted in the non-strict way that is usual for sketches,
but can also be interpreted strictly.
Then an advantage of our \emph{contexts} is that each non-strict model is uniquely isomorphic
to a strict model.
(The restrictions on our ability to express equalities between pairs of nodes are important here.)
Hence it is straightforward to apply the strict theory to non-strict models.

\emph{Full faithfulness:}
A principal goal (Theorem~\ref{thm:AUpres}) is that arbitrary strict AU-functors
between presented AUs should be expressible up to equality in terms of the presenting contexts.
Our initial notion of morphism between contexts is that of sketch homomorphism,
but this is entirely syntax-bound and insufficiently general.
It maps nodes to nodes, edges to edges, commutativities to commutativities, etc.
We need two technical ingredients to get beyond this.

\emph{Object equalities} (Section~\ref{sec:ObjEq})
deal with the fact that,
although our contexts do not allow us to express arbitrary equalities between objects,
implied equalities can arise when identical constructions are applied to equal data.
An object equality between objects is a fillin morphism that arises in that kind of way.
Note that this is much stronger than simply having an isomorphism.
We extend the phrase ``object equality'' to apply more generally to
homomorphisms of models in which every carrier morphism is an object equality.

\emph{Equivalence extensions} (Section~\ref{sec:EquivExt})
accommodate our need to map elements of one context not just to elements explicitly
in another (which is what a context homomorphism does),
but also to derived elements.
An equivalence extension of a context adjoins elements that are uniquely determined by
elements of the original,
so that the presented AUs are isomorphic.
This is essentially the idea of ``schema entailment'' as set out in~\cite{GeoZ}.

Our category $\Con$ (Section~\ref{sec:Con}),
which maps fully and faithfully to AUs and strict AU-functors,
is then made by turning object equalities to equalities
and making equivalence extensions invertible.

\emph{Note on notation:} Our default order of composition of morphisms is \emph{diagrammatic}.
For applicational order we shall always use ``$\circ$''.
For diagrammatic order we shall occasionally show this explicitly using ``;''.

\section{Arithmetic universes}\label{sec:AUs}
We follow~\cite{Maietti:AritJ,ArithInd} in defining Joyal's arithmetic universes (AUs)
to be \emph{list arithmetic pretoposes}.

More explicitly,
as a pretopos an AU $\catA$ is a category equipped with finite limits,
stable finite disjoint coproducts and stable effective quotients of equivalence relations.
(For more detailed discussion, see, e.g., \cite[A1.4.8]{Elephant1}.)

In addition, it has, for each object $A$,
a \emph{parametrized list object} $\qeql(A)$.
It is equipped with morphisms
\[
  \xymatrix{
    {1}
      \ar@{->}[r]^-{\varepsilon}
    & {\qeql(A)}
      \ar@{<-}[r]^-{\cons}
    & {A \times \qeql(A)}
  }
\]
(where $\cons(a,x)=a:x$ is the list $x$ with $a$ appended at the front)
and whenever we have the solid part of the following diagram, there is a unique
fillin of the dotted parts to make a commutative diagram.
\begin{equation}\label{eq:listunichar}
  \xymatrix@C=2cm{
    & {\qeql(A)\times B}
      \ar@{.>}[dd]^{\qeqlrec{A}{y}{g}}
    & {(A\times \qeql(A)) \times B}
      \ar@{->}[l]_{\cons \times B}
      \ar@{->}[d]^{\cong}
    \\
    & & {A\times (\qeql(A)\times B)}
      \ar@{.>}[d]^{A\times\qeqlrec{A}{y}{g}}
    \\
    {B}
      \ar@{->}[uur]^{\langle \varepsilon,B \rangle}
      \ar@{->}[r]_{y}
    & {Y}
      \ar@{<-}[r]_{g}
    & {A\times Y}
  }
\end{equation}
In other words, this recursively defines $r = \qeqlrec{A}{y}{g}$ by
  \begin{gather*}
    r([],b) = y(b) \\
    r(a:x,b) = g(a,r(x,b))
  \end{gather*}


Note that the use of $B$ rather than $1$ corresponds to this being a \emph{parameterized}
list object -- that is to say, it makes $\qeql(A)\times B$ a list object in the slice over $B$.

\begin{remark}\label{rem:listf}
  For future reference, we note the functoriality of $\qeql$:
  If $f\colon A_1 \to A_2$, then there is a unique
  $\qeql(f)\colon\qeql(A_1)\to\qeql(A_2)$ making the following diagram commute.
  \[
    \xymatrix@C=2cm{
      & {\qeql(A_1)}
        \ar@{<-}[r]^{\cons_1}
        \ar@{.>}[d]^{\qeql(f)}
      & {A_1 \times \qeql(A_1)}
        \ar@{.>}[d]^{f\times\qeql(f)}
      \\
      {1}
        \ar@{->}[ur]^{\varepsilon_1}
        \ar@{->}[r]_{\varepsilon_2}
      & {\qeql(A_2)}
        \ar@{<-}[r]_{\cons_2}
      & {A_2\times\qeql(A_2)}
    }
  \]
  To see this, consider the action of $A_1$ on $\qeql(A_2)$ by
  \[
    \xymatrix@1@C=2cm{
      {\qeql(A_2)}
      & {A_2\times\qeql(A_2)}
        \ar@{->}[l]_{\cons_2}
      & {A_1\times \qeql(A_2)}
        \ar@{->}[l]_{f\times \qeql(A_2)}
    }
    \text{.}
  \]
\end{remark}

We assume the AU structure specifies canonical choices
of those colimits, limits and list objects.
This enables an approach using the universal algebra of cartesian theories,
with (partial) algebraic operators for the canonical choices.

We shall use the quasiequational form of cartesian theories~\cite{PHLCC}.
Our cartesian theory of AUs will use primitive operators
as suggested by the following proposition,
although that particular choice of primitives is not critical.
Doubtless there are more efficient characterizations,
and the techniques in the remainder of the present paper
are intended to be equally applicable for other choices.

\begin{proposition}
  A category $\catA$ is an arithmetic universe iff the following hold.
  \begin{enumerate}
  \item
    $\catA$ has a terminal object and pullbacks (hence all finite limits).
  \item
    $\catA$ has an initial object and pushouts (hence all finite colimits),
    and they are stable under pullback.
  \item
    Balance (unique choice):
    if a morphism is both mono and epi, then it is iso.
  \item
    Exactness: any equivalence relation is effective
    (it is the kernel pair of its own coequalizer).
  \item
    $\catA$ has parameterized list objects.
  \end{enumerate}
\end{proposition}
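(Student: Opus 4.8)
The plan is to prove the two directions separately, and in each case to reconcile the slightly non-standard bundle of primitives in the statement with the official definition of an AU as a list arithmetic pretopos. The forward direction ($\catA$ an AU $\Rightarrow$ the five conditions) is essentially a restatement: an AU has finite limits, so (1) holds; it has finite colimits that are stable and disjoint, which certainly gives (2); being a pretopos it is balanced and exact, giving (3) and (4); and (5) is part of the definition. The only point needing a remark is that (1) says ``terminal object and pullbacks'' rather than ``all finite limits'' — but these are equivalent by the usual construction of finite products from a terminal object and pullbacks, and equalizers as pullbacks of diagonals, so nothing is lost.

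For the converse, assume (1)--(5) and show $\catA$ is a list arithmetic pretopos. From (1) we get all finite limits as above. The substantive work is to extract from (2)--(4) the pretopos structure: stable finite \emph{disjoint} coproducts and stable effective quotients of equivalence relations. Condition (4) gives effectiveness of equivalence relations directly, and the quotients (coequalizers) exist by (2), which also asserts their pullback-stability. So the main thing to verify is that the finite coproducts supplied by (2) are \emph{disjoint} — i.e. that the coprojections into $A+B$ are monic and that their pullback is the initial object. First I would note that stability of the initial object under pullback (part of (2)) forces $0$ to be strict: any map $X\to 0$ makes $X\cong X\times_0 0\cong 0$. Then disjointness of binary coproducts is derived in the standard way: one shows each coprojection $A\to A+B$ is mono by exhibiting a retraction-like pullback square using stability, and shows $A\times_{A+B}B\cong 0$ again by stability of pushouts/coproducts along the relevant maps. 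Here I would lean on (3) (balance) where needed to upgrade a mono-epi to an iso. Finally, list objects are (5), and ``parametrized'' in the sense of~\eqref{eq:listunichar} is exactly what is assumed, so the slice-over-$B$ list-object property is already built in.

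The main obstacle I expect is the derivation of disjointness and of the strict initial object purely from ``initial object and pushouts, stable under pullback'' — the phrase ``stable under pullback'' must be read as applying to the whole finite-colimit structure, and one has to be a little careful that stability of pushouts (not merely of coproducts) is what makes the disjointness argument go through, since $A+B$ is a pushout over $0$ and disjointness is a statement about the pullback of that pushout cocone. Once strictness of $0$ is in hand, though, the rest is the textbook route from ``extensive + exact + balanced + finite limits + list objects'' to ``list arithmetic pretopos'', and I would simply cite~\cite[A1.4.8]{Elephant1} for the pretopos part rather than reproduce it. A secondary, purely expository obstacle is making precise which canonical choices of limits, colimits and list objects are being fixed, since the surrounding text insists on canonical choices for the universal-algebra treatment; but that is a matter of phrasing the proposition's hypotheses as ``$\catA$ \emph{can be equipped with} such structure'' rather than a mathematical difficulty.
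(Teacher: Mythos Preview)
Your forward direction has a real gap. A pretopos, as defined here (finite limits, stable disjoint finite coproducts, stable effective quotients of equivalence relations), is \emph{not} assumed to have coequalizers of arbitrary parallel pairs, hence not all pushouts. So the claim ``it has finite colimits that are stable and disjoint, which certainly gives (2)'' is exactly what needs to be proved, not a restatement. The paper's argument for $\Rightarrow$ is almost entirely devoted to this: given $X\rightrightarrows Y$, one takes the image in $Y\times Y$ to get a relation, closes it up to an equivalence relation, and then uses the pretopos quotient. The crucial step --- transitive closure --- genuinely requires the list-object structure (via the free-category-over-a-graph construction). Without invoking list objects you cannot get from ``pretopos'' to ``all finite colimits,'' and your sketch never mentions this. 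This is the main content of the forward direction and you have treated it as trivial.

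Your backward direction is closer to the mark on disjointness (the paper's argument is essentially the one you outline, pulling back the coproduct cocone along an injection and using stability and balance), but you are missing the second ingredient the paper isolates: showing that every epi is the coequalizer of its kernel pair, equivalently that the comparison map $e''$ in the factorization $e = e'e''$ (with $e'$ the coequalizer of the kernel pair) is monic. This is what establishes regularity, and the paper gives a careful direct argument using stability of coequalizers under pullback together with a split-fork trick. Your appeal to \cite[A1.4.8]{Elephant1} does not cover this --- that reference records the definition, it does not derive regularity from (1)--(4). You may be able to recover this step by the standard ``stable coequalizers of kernel pairs $\Rightarrow$ regular'' argument, but you should at least flag it rather than fold it into ``the textbook route.''
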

\begin{proof}
  $\Rightarrow$:
  (1), (3) and (4) are properties of any pretopos,
  as is the existence of stable finite coproducts.
  (5) is a postulate for AUs.

  Hence it remains to show the existence of stable coequalizers for all pairs $X\rightrightarrows Y$.
  First, because, as pretopos, $\catA$ is regular, we can take the image $R$ in $Y\times Y$,
  a relation on $Y$.
  Next, in a pretopos we can find the reflexive-symmetric closure of $R$.
  Next, in an AU we can find the free category over any directed graph,
  and in particular we can find the transitive closure of any relation.
  We end up with the equivalence relation generated by $R$,
  and at each step, we keep the same set of morphisms from $Y$ that compose equally with
  the two morphisms from $X$ or $R$.
  Thus the coequalizer of the equivalence relation,
  existing because of exactness of $\catA$ as pretopos,
  also serves as a coequalizer for $X\rightrightarrows Y$.

  Stability follows from the stability, in a pretopos, of image factorization
  and of coequalizers of equivalence relations.

  $\Leftarrow$:
  Two properties of pretoposes remain to be proved.
  First, for binary coproduct, the injections are monic and disjoint.
  Second, any epi is the coequalizer of its kernel pair.

  Consider a coproduct cocone (bottom row here) pulled back along one of the injections.
  The two squares are pullbacks, $\Delta$ is diagonal.
  \[
    \xymatrix{
      {K}
        \ar@{->}[r]^{p_2}
        \ar@{->}[d]_{p_1}
      & {X}
        \ar@{->}@/^1pc/[l]^{\Delta}
        \ar@{->}[d]^{i_1}
      & {L}
        \ar@{->}[l]_{q_2}
        \ar@{->}[d]^{q_1}
      \\
      {X}
        \ar@{->}[r]_{i_1}
      & {X+Y}
      & {Y}
        \ar@{->}[l]^{i_2}
    }
  \]
  By stability the top row is a coproduct cocone,
  and so we can define a copairing morphism $f = [\Id_{K}, q_2 \Delta] \colon X \to K$
  and calculate that $f = p_2^{-1}$.
  Since the kernel pair projection $p_2$ is an isomorphism,
  it follows that $i_1$ is monic.

  We can now replace $K$ and the projections by $X$ and identity morphisms,
  and the coproduct property of the top row can be rephrased as follows:
  every triangle
  $ \xymatrix{
    {L}
      \ar@{->}[r]_{q_2}
      \ar@{->}@/^1pc/[rr]^{}
    & {X}
      \ar@{->}[r]
    & {}
  }$
  commutes.

  Now consider $! \colon 0 \to L$.
  By stability of the initial object, we see that $!$ is mono.
  It is also epi.
  For suppose we have two morphisms $f_1,f_2 \colon L \to Z$.
  Consider the following diagram, where $j_1$ and $j_2$ are coproduct injections.
  \[
    \xymatrix{
      {L}
        \ar@{->}[r]^{q_2}
        \ar@{->}@<1ex>[d]^{f_2}
        \ar@{->}@<-1ex>[d]_{f_1}
      & {X}
        \ar@{->}[d]^{j_2}
      \\
      {Z}
        \ar@{->}[r]^{j_1}
      & {Z+X}
    }
  \]
  Both squares must commute, and we already know that $j_1$ is monic,
  so $f_1=f_2$.
  By balance, it follows that $L\cong 0$.

  It remains to show that any epi $e\colon X \to Y$ is the coequalizer of its kernel pair.
  In fact we show something slightly more general, without assuming $e$ is epi.
  Let $K_2$ be its kernel pair, with projections $p_1$ and $p_2$,
  and let $e'\colon X \to Y'$ be their coequalizer,
  with factorization $e = e' e''$.
  Then we show that $e''$ is mono.
  (If $e$ is epi then so too is $e''$, so $e''$ is an isomorphism by balance.)

  In the following diagram, where the bottom row is pulled back along $e$,
  we see that the top row is a split fork and hence a coequalizer.
  \[
    \xymatrix{
      {K_3}
        \ar@{->}@<1ex>[r]^{p_{13}}
        \ar@{->}[r]_{p_{23}}
        \ar@{->}[d]_{p_{12}}
      & {K_2}
        \ar@{->}@/^1pc/[l]^{\Delta_{122}}
        \ar@{->}[r]^{p_2}
        \ar@{->}[d]^{p_1}
      & {X}
        \ar@{->}@/^1pc/[l]^{\Delta}
        \ar@{->}[d]^{e}
      \\
      {K_2}
        \ar@{->}@<0.5ex>[r]^{p_{1}}
        \ar@{->}@<-0.5ex>[r]_{p_{2}}
      & {X}
        \ar@{->}[r]^{e}
      & {Y}
    }
  \]
  Now consider pulling back the factorization $e' e''$:
  \[
    \xymatrix{
      {K_2}
        \ar@{->}[r]^{f'}
        \ar@{->}[d]_{p_1}
      & {X'}
        \ar@{->}[r]^{f''}
        \ar@{->}[d]^{g}
      & {X}
        \ar@{->}@/^1pc/[ll]^(0.7){\Delta}
        \ar@{->}[d]^{e}
      \\
      {X}
        \ar@{->}[r]_{e'}
      & {Y'}
        \ar@{->}[r]_{e''}
      & {Y}
    }
  \]
  By stability, we see that $f'$ is a coequalizer of $p_{13}$ and $p_{23}$,
  and by comparing with the split fork we find $\Delta f' = f''^{-1}$.
  We deduce that we can replace $X'$ by $X$, and have a pullback square
  \[
    \xymatrix{
      {X}
        \ar@{=}[r]
        \ar@{->}[d]_{e'}
      & {X}
        \ar@{->}[d]^{e}
      \\
      {Y'}
        \ar@{->}[r]_{e''}
      & {Y}
    }
  \]
  Now we can use the above pullback square, turned on its side,
  to pull back the factorization $e' e''$ along $e''$.
  \[
    \xymatrix{
      & {L}
        \ar@{->}[dr]^{p_2}
        \ar@{->}[dd]^(0.7){p_1}
      \\
      {X}
        \ar@{->}[ur]^{\langle e',e' \rangle}
        \ar@{->}[rr]_(0.7){e'}
        \ar@{=}[dd]
      && {Y'}
        \ar@{->}[dd]^{e''}
      \\
      & {Y'}
        \ar@{->}[dr]^{e''}
      \\
      {X}
        \ar@{->}[ur]^{e'}
        \ar@{->}[rr]_{e}
      && {Y}
    }
  \]
  By stability (for $e'$ as coequalizer) we see that $p_2$ is an isomorphism
  and so $e''$ is monic.
\end{proof}

It follows that the theory of AUs is quasiequational as in~\cite{PHLCC}.

\begin{definition}\label{def:AUQuasieq}
  We present the quasiequational theory of AUs as follows.
  Some of the operators and axioms are already set out explicitly in~\cite{PHLCC,ArithInd},
  and we refer back to them for some of the details.
  \begin{itemize}
  \item
    (See~\cite[Example~4]{PHLCC}.)
    The ingredients of the theory of categories:
    sorts $\qeqobj,\qeqarr$,
    total operators $\qeqdom,\qeqcod\colon\qeqarr\to\qeqobj$ (domain and codomain) and
    $\qeqid \colon \qeqobj \to \qeqarr$ (identity morphisms),
    and partial $\qeqcomp\colon\qeqarr^2 \to \qeqarr$
    (composition, as binary operator, in applicational order).
  \item
    (See~\cite[Section~6]{PHLCC}.)
    Ingredients for terminal objects:
    a constant $\qeqt\colon\qeqobj$
    and operator $\qeqtfill_{\cdot}\colon\qeqobj\to\qeqarr$ (unique morphism to terminal) with axioms
    \begin{gather*}
      \seq{\top}{X}{\qeqdom(\qeqtfill_{X})=X \wedge \qeqcod(\qeqtfill_{X})=\qeqt}\\
      \seq{\qeqcod(u)=\qeqt}{u}{u = \qeqtfill_{\qeqdom(u)}}
      \text{ (for uniqueness).}
    \end{gather*}
  \item
    (See~\cite[Section~6.1]{PHLCC}.)
    Ingredients for pullbacks:

    First, operators
    $\qeqproj{\cdot}{\cdot}^1, \qeqproj{\cdot}{\cdot}^2 \colon \qeqarr^2 \to \qeqarr$
    for pullback projections.
    If $u_1$ and $u_2$ have a common codomain,
    then $\qeqproj{u_1}{u_2}^1$ and $\qeqproj{u_1}{u_2}^2$ are the two projections from the pullback.
    We also write $\qeqproj{u_1}{u_2}$ for $u_1 \qeqcomp \qeqproj{u_1}{u_2}^{1}$,
    the diagonal of the pullback square,
    and $\qeqpb{u_1}{u_2}$ for $\qeqdom(\qeqproj{u_1}{u_2})$,
    the pullback object itself.

    Next, a pairing operator
    $\qeqpbfill{\cdot}{\cdot}{\cdot}{\cdot} \colon \qeqarr^4 \to \qeqarr$,
    with $\qeqpbfill{v_1}{v_2}{u_1}{u_2}$
    the fillin to the pullback of $u_1$ and $u_2$ for a cone $(v_1, v_2)$.
    It is defined iff the four arrows make a commutative square in the obvious way,
    and it has the expected domain and codomain and commutativities.

    For uniqueness of fillins,
    \[
      \seq{\qeqcod(w) = \qeqpb{u_1}{u_2}}{w,u_1,u_2}
          {w = \qeqpbfill
                  {\qeqproj{u_1}{u_2}^1 \qeqcomp w}
                  {\qeqproj{u_1}{u_2}^2 \qeqcomp w}
                  {u_1}{u_2}
          }
      \text{.}
    \]
  \item
    We shall also use some derived notation in a self-explanatory way for products
    $X\times Y = \qeqpb{\qeqtfill_X}{\qeqtfill_Y}$.
    For example, the projections are
    $\qeqproj{X}{Y}^i = \qeqproj{\qeqtfill_X}{\qeqtfill_Y}^i$,
    and the fillins require no subscripts.

    Also, we shall write $\qeqeq{u_1}{u_2}\colon\qeqeqdom{u_1}{u_2}\to X$
    for the equalizer of $u_1,u_2\colon X \to Y$, defined in a canonical way.
    Specifically,
    \[
      \qeqeq{u_1}{u_2} \triangleq
          \qeqproj{\qeqprodfill{\qeqid(X)}{u_1}}{\qeqprodfill{\qeqid(X)}{u_2}}^1
    \]
    (The two projections are equal.)
  \item
    Ingredients for initial objects and pushouts.
    They are dual to those for terminal objects and pullbacks.
    (We can also express coproducts and coequalizer,
    by dualizing the treatment for products and equalizers.)

    For initial objects we have a constant $\qeqi\colon\qeqobj$,
    an operator $\qeqifill_{\cdot}\colon\qeqobj\to\qeqarr$,
    and a conditional equation that if $\qeqdom(u)=\qeqi$,
    then $u = \qeqifill_{\qeqcod(u)}$.

    Operators
    $\qeqinj{\cdot}{\cdot}^1, \qeqinj{\cdot}{\cdot}^2 \colon \qeqarr^2 \to \qeqarr$
    are for pushout injections.
    If $u_1$ and $u_2$ have a common domain,
    then $\qeqinj{u_1}{u_2}^1$ and $\qeqinj{u_1}{u_2}^2$ are the two injections to the pushout.
    We also write $\qeqinj{u_1}{u_2}$ for $\qeqinj{u_1}{u_2}^{1} \qeqcomp u_1$,
    the diagonal of the pushout square,
    and $\qeqpo{u_1}{u_2}$ for $\qeqcod(\qeqinj{u_1}{u_2})$,
    the pushout object itself.

    Next, a copairing operator
    $\qeqpofill{\cdot}{\cdot}{\cdot}{\cdot} \colon \qeqarr^4 \to \qeqarr$,
    with $\qeqpofill{v_1}{v_2}{u_1}{u_2}$
    the fillin from the pushout of $u_1$ and $u_2$ for a cocone $(v_1, v_2)$.

    For uniqueness of fillins,
    \[
      \seq{\qeqdom(w)=\qeqpo{u_1}{u_2}}
        {w,u_1,u_2}
        {w = \qeqpofill
          {w \qeqcomp \qeqinj{u_1}{u_2}^1}
          {w \qeqcomp \qeqinj{u_1}{u_2}^2}
          {u_1}{u_2}
        }
      \text{.}
    \]
  \item
    Ingredients for stability of colimits under pullback.

    For stability of the initial object,
    it suffices to say that any morphism with $\qeqi$ for codomain is an isomorphism:
    \[
      \seq{\qeqcod(u) = \qeqi}
        {u}
        {\qeqifill_{\qeqdom(u)}\qeqcomp u = \qeqid(\qeqdom(u))}
      \text{.}
    \]

    For stability of pushouts,
    we have an operator
    $\qeqpostab{\cdot}{\cdot}{\cdot}\colon \qeqarr^3 \to \qeqarr$,
    with $\qeqpostab{u_1}{u_2}{w}$ defined iff $\qeqcod(w) = \qeqpo{u_1}{u_2}$.
    To express its equations, we define notation as shown in this diagram.
    Here the base diamond is a pushout, and it is pulled back along $w$.
    The inner top diamond is also a pushout, with fillin $e$,
    and the equations for the operator, when it is defined,
    are those required to say that
    $\qeqpostab{u_1}{u_2}{w} = e^{-1}$.
    \begin{equation}\label{eq:poStab}
      \xymatrix{
        {\qeqpb{w}{v}}
          \ar@{.>}[rr]^{u'_2}
          \ar@{.>}[dr]^{u'_1}
          \ar@{.>}[dd]
        && {\qeqpb{w}{v_2}}
          \ar@{.>}[drr]^{v'_2}
          \ar@{.>}[dr]
          \ar@{.>}[dd]
        \\
        & {\qeqpb{w}{v_1}}
          \ar@{.>}[rr]
          \ar@{.>}@/_1pc/[rrr]_{v'_1}
          \ar@{.>}[dd]
        && {\qeqpo{u'_1}{u'_2}}
          \ar@{.>}[r]^{e}
        & {}
          \ar@{->}[dd]^{w}
        \\
        {}
          \ar@{->}[rr]^(.3){u_2}
          \ar@{.>}[drrrr]_{v}
          \ar@{->}[dr]_{u_1}
        && {}
          \ar@{.>}[drr]^{v_2}
        \\
        & {}
          \ar@{.>}[rrr]_{v_1}
        &&& {\qeqpo{u_1}{u_2}}
      }
    \end{equation}
    \[
      \begin{array}[t]{ll}
        v      & = \qeqinj{u_1}{u_2} \\
        v_{i}  & = \qeqinj{u_1}{u_2}^{i} \\
        v'_{i} & = \qeqproj{w}{v_{i}}^{1} \\
        u'_{i} & = \qeqpbfill{\qeqproj{w}{v}^1}{u_{i} \qeqcomp \qeqproj{w}{v}^2}{w}{v_{i}} \\
        e      & = \qeqpofill{v'_1}{v'_2}{u'_1}{u'_2}
      \end{array}
    \]
\item
    Ingredients for balance (unique choice).

    We have an operator
    $\qequc \colon \qeqarr \to \qeqarr$,
    with $\qequc(u)$ defined if
    $\qeqproj{u}{u}^1 = \qeqproj{u}{u}^2$
    and $\qeqinj{u}{u}^1 = \qeqinj{u}{u}^2$
    (i.e. $u$ is monic and epi).
    When it is defined we have
    $\qequc(u) = u^{-1}$.
  \item
    Ingredients for exactness.

    We have an operator $\qeqex \colon \qeqarr^5 \to \qeqarr$,
    with $\qeqex(\pi_1,\pi_2,r,s,t)$ defined if $\langle\pi_1,\pi_2\rangle$ describes
    a binary relation,
    with $r,s,t$ expressing reflexivity, symmetry and transitivity.

    \begin{equation}\label{eq:exactness}
      \xymatrix@C=2cm{
        & & {K}
          \ar@{->}[d]^{\qeqproj{\gamma}{\gamma}^{i} \, (i=1,2)}
        \\
        {X_2}
          \ar@{->}@<0.5ex>[r]^{\qeqproj{\pi_2}{\pi_1}^{i} \, (i=1,2)}
          \ar@{->}@<-0.5ex>[r]_{t}
        & {X_1}
          \ar@{->}@<0.5ex>[r]^{\pi_i \, (i=1,2)}
          \ar@{->}[ur]^{e}
          \ar@{->}[dr]_{\pi = \qeqprodfill{\pi_1}{\pi_2}}
          \ar@{->}@(ul,ur)[]^{s}
        & {X_0}
          \ar@{->}@<0.5ex>[l]^{r}
          \ar@{->}[r]_{\gamma}
        & {X}
        \\
        & & {X_0 \times X_0}
          \ar@{->}[u]_{\qeqproj{X_0}{X_0}^{i} \, (i=1,2)}
      }
    \end{equation}
    We require that $\pi$ in monic; that $X_2 = \qeqpb{\pi_2}{\pi_1}$;
    that $r,s,t$ compose correctly with $\pi_1$ and $\pi_2$;
    that $\gamma$ is the canonical coequalizer of $\pi_1$ and $\pi_2$;
    that $K$ is the kernel pair of $\gamma$;
    and that $e$ is the fillin.
    Our characterizing equations for $\qeqex$ are to say
    \[
      \qeqex(\pi_1,\pi_2,r,s,t) = e^{-1}\text{.}
    \]
  \item
    Ingredients for list objects.

    We have total operators
    $\qeqle,\qeqlcons \colon \qeqobj \to \qeqarr$
    for the principal structure,
    and we also write $\qeql(A)$ for $\qeqcod(\qeqle(A))$.

    For the fillins we have a partial operator
    $\qeqlrec{\cdot}{\cdot}{\cdot}\colon \qeqobj\times\qeqarr^2 \to \qeqarr$.

    Let us write, temporarily, the following. (See diagram~\eqref{eq:listunichar}.)
    \[
      \begin{split}
        \phi^{A}(y,g) & \triangleq
          \qeqcod(y)=\qeqcod(g)
          \wedge \qeqdom(g) = A \times \qeqcod(g) \\
        \psi^{A}_{y,g}(r) & \triangleq
          y = r \qeqcomp \qeqprodfill{B}{\varepsilon} \\
          & \wedge g \qeqcomp (r\times A) \qeqcomp \mathop{\cong}
                 = r \qeqcomp (B\times\qeqlcons(A)) \\
        \text{where} \\
        B & \triangleq \qeqdom(y) \\
        \qeqprodfill{B}{\varepsilon} & \triangleq
          \qeqprodfill{\qeqid(B)}{\qeqle(A) \qeqcomp \qeqtfill_B} \\
        \mathop{\cong} & \triangleq
          \qeqprodfill{\qeqproj{A}{\qeql A}^1 \qeqcomp \qeqproj{A\times\qeql A}{B}^1}
            {\qeqprodfill{\qeqproj{A}{\qeql A}^2 \qeqcomp \qeqproj{A\times\qeql A}{B}^1}
              {\qeqproj{A\times\qeql A}{B}^2}}
      \end{split}
    \]
    Here $\phi$ expresses the domain of definition of the fillin $\qeqlrec{A}{y}{g}$,
    and $\psi$ is the condition (on $r$) that it needs to satisfy. The axioms are now --
    \[
      \begin{split}
        \seq{\top}{A}{\phi^{A}(\qeqle(A), \qeqlcons(A))
            \wedge \qeqdom(\qeqle(A)) = 1} \\
        \seq{\phi^{A}(y,g)}{A,y,g}{\psi^{A}_{y,g}(\qeqlrec{A}{y}{g})} \\
        \seq{\converges{\qeqlrec{A}{y}{g}}}{A,y,g}{\phi^{A}(y,g)}    \\
        \seq{\psi^{A}_{y,g}(r)}{A,y,g,r}{r = \qeqlrec{A}{y}{g}}
      \end{split}
    \]
  \end{itemize}
\end{definition}

\begin{definition}\label{def:AUfunctor}
  A \emph{strict AU-functor} from one AU to another is a homomorphism for the quasiequational
  theory of AUs.
  In other words, it is a functor that preserves terminals, pullbacks, intials, pushouts
  and list objects \emph{strictly.}

  An \emph{AU-functor} is a functor that preserves those constructions
  (and hence also all finite limits and finite colimits) up to isomorphism.
\end{definition}

In AUs we have a general ability to construct free algebras.
For theories given by finite product (FP) sketches
this is described in some detail in~\cite{Maietti:lexsk}.
That paper also alludes to the ability to generalize to finite limit (FL) sketches,
in other words to cartesian theories.
\cite{PHLCC} gives a general account of the cartesian construction, and it is valid in AUs.

\section{AU-sketches}\label{sec:AUSk}
We shall be interested in generators and relations for AUs,
but we shall generally not express them directly using the quasiequational algebra.
Instead, we borrow the ideas of sketches.

In their most general form (in this section), they are equivalent in expressive power to the
quasiequational algebra.
In one direction we make this explicit by giving the equations that correspond to
ingredients of a sketch.
The other direction is less clear, but comes down to the question of how to express the operators
in the quasiequational theory of AUs.
The operators for pullbacks and their projections,
and analogous operators for other universal constructions,
can be captured using the ``universals'' in a sketch.
The operators for fillins, being the unique solutions to certain equational constraints on edges,
can be captured with edges constrained by suitable commutativities.

Our main reason for using the sketches is that they give us better control of the important
issue of strictness of models (Section~\ref{sec:Models}).
In Section~\ref{sec:Extensions} we shall restrict our attentions from general sketches
to ``contexts'', finite sketches for which we have good coherence properties for strictness.

\begin{definition}\label{def:AUSk}
  An \emph{AU-sketch} (or just \emph{sketch}) is a structure with sorts
  and operations as shown in this diagram.
  \[
    \xymatrix{
      {\skupb}
        \ar@{->}@<0.5ex>[d]^{\skupbtri_2}
        \ar@{->}@<-0.5ex>[d]_{\skupbtri_1}
      && {\skul}
        \ar@{->}[ll]_{\skulpb}
        \ar@{->}[rr]^{\skult}
        \ar@{->}@<0.5ex>[d]^{\skule}
        \ar@{->}@<-0.5ex>[d]_{\skulcons}
      && {\skut}
        \ar@{->}[d]^{\skutn}
      \\
      {\sktri}
        \ar@{->}@<0.5ex>[rr]^{\skface_{i}\,(i=0,1,2)}
      && {\ske}
        \ar@{->}@<0.5ex>[rr]^{\skface_{i}\,(i=0,1)}
      && {\skn}
        \ar@{->}@<0.5ex>[ll]^{\sknid}
      \\
      {\skupo}
        \ar@{->}@<0.5ex>[u]^{\skupotri_1}
        \ar@{->}@<-0.5ex>[u]_{\skupotri_2}
      &&&& {\skui}
        \ar@{->}[u]_{\skuin}
    }
  \]
  They are required to satisfy the following equations:
  \begin{align*}
    & \sknid\skedom = \sknid\skecod = \Id  \\
    & \sktril\skecod = \sktrir\skedom
       \quad \sktril\skedom = \sktric\skedom
       \quad \sktrir\skecod = \sktric\skecod  \\
    & \skupbtri_1\sktric =\skupbtri_2\sktric  \\
    & \skupotri_1\sktric =\skupotri_2\sktric  \\
    & \skulpb\skupbtri_1\sktric\skecod =\skult\skutn \\
    & \skule\skedom = \skult\skutn \quad \skulcons\skedom = \skulpb\skupbtri_1\sktril\skedom \\
    & \skule\skecod = \skulcons\skecod = \skulpb\skupbtri_1\sktril\skecod
  \end{align*}

  If $\thT_1$ and $\thT_2$ are sketches,
  then a \emph{homomorphism} of sketches from $\thT_1$ to $\thT_2$,
  written $f\colon\thT_1\skhom\thT_2$,
  is defined in the obvious way --
  a family of carrier functions, one for each sort, preserving the operators.

  However, we shall consider two sketch homomorphisms to be \emph{equal}
  if they agree merely on $\skn$ and $\ske$.

  We write $\mathbf{Sk}_{\skhom}$ for the category of sketches and sketch homomorphisms.
\end{definition}

The structures are a formalization of the sketches well known from e.g. \cite{BarrWells:TTT},
but adapted for AUs.
We shall describe the parts in more detail below,
but as a preliminary let us introduce some language that indicates the connection.
The elements of $\skn$, $\ske$ and $\sktri$ are referred to as
\emph{nodes, edges} and \emph{commutativities}.

The elements of the other sorts are \emph{universals,}
and specify universal properties of various kinds for their \emph{subjects}.
For example, an element of $\skupb$ is a \emph{pullback universal}
and corresponds to a cone in a finite limit sketch.
Its subjects are the pullback node and the three projection edges of the pullback cone.
Similarly, an element of $\skul$ is a \emph{list universal}.
Its subjects are the list object and the two structure maps, for $\qeqle$ and $\qeqlcons$.
It will also have indirect subjects,
since it needs terminal and pullback universals to express the domains of the structure maps.

Any sketch can be used as a system of generators
(the nodes and edges)
and relations to present an AU.
We shall list these implied relations in the general description below.
Note that in each case the equations constraining sketches ensure that
all the terms used in the relations are defined.

$\skn,\sknid, \ske,\skedom,\skecod$ form the graph
(which we take to be reflexive) of nodes and edges,
declaring some objects and arrows and specifying their identities, domains and codomains.
The elements of $\skn$ and $\ske$ are taken as generators of sorts $\qeqobj$ and $\qeqarr$.
The implied relations are --
\[
  \qeqid(X) = \sknid(X) \quad \qeqdom(u) = \skedom(u) \quad \qeqcod(u) = \skecod(u)
\]

$\sktri$, with $\sktril$, $\sktrir$ and $\sktric$, comprises the commutativities, stipulating commutative triangles
$\xymatrix@1{
  {}
    \ar@{->}[r]_{\sktril}
    \ar@{->}@/^1pc/[rr]_{\bullet}^{\sktric}
  & {}
    \ar@{->}[r]_{\sktrir}
  & {}
}$.
Given a triangle of edges
$\xymatrix@1{
  {X}
    \ar@{->}[r]_{u}
    \ar@{->}@/^1pc/[rr]^{w}
  & {Y}
    \ar@{->}[r]_{v}
  & {Z}
}$,
we shall write $uv \skdiag_{XYZ} w$ for the existence of a commutativity with that triangle.
(Note the \emph{diagrammatic order}.)
We shall also write $u\skdiag_{XY}u'$ for a \emph{unary commutativity},
meaning a commutativity $\sknid(X) u\skdiag_{XXY}u'$.
We shall omit the node subscripts where convenient.

Equationally, each commutativity $\omega$ corresponds to a relation
\[
  \sktrir(\omega)\qeqcomp\sktril(\omega) = \sktric(\omega)
  \text{.}
\]

$\skut$ and $\skupb$, using $\skutn,\skupbtri^1,\skupbtri^2$,
are universals for finite limits, here terminal objects or pullbacks.
For each pullback universal (in $\skupb$) we describe the cone by two commutative triangles
($\skupbtri^1,\skupbtri^2$),
the two halves of the pullback square.
For universals $\omega\in\skut$ or $\omega\in\skupb$, the implied relations are --
\[
  \skutn(\omega) = \qeqt
\]
\[
  \sktril(\skupbtri^{\lambda}(\omega)) =
    \qeqproj{\sktrir(\skupbtri^1(\omega))}{\sktrir(\skupbtri^2(\omega))}^{\lambda}
  \quad
  (\lambda = 1,2)
\]

$\skui,\skuin,\skupo,\skupotri^1,\skupotri^2$ are similar, and dual, for finite colimits.
\[
  \skuin(\omega) = \qeqi
\]
\[
  \sktrir(\skupotri^{\lambda}(\omega)) =
    \qeqinj{\sktril(\skupotri^1(\omega))}{\sktril(\skupotri^2(\omega))}^{\lambda}
  \quad
  (\lambda = 1,2)
\]

$\skul$, for \emph{list universals},
is novel, but works on similar principles.
For a list universal $\omega\in\skul$,
$\skule(\omega)$ and $\skulcons(\omega)$ supply the primary structure morphisms
$\qeqle$ and $\qeqlcons$
for $\qeql(A(\omega))$, where $A(\omega)=\skecod(\sktril(\skupbtri^1(\skulpb(\omega))))$.
The domains of the structure morphisms
($1$ and $A(\omega)\times\qeql(A(\omega))$) are limits,
and $\skult,\skulpb$ supply universals to stipulate them.
Note that, since we need a terminal anyway,
we might as well reuse it as the terminal needed for a product as special case of pullback.
The implied relations, which are in addition to those already implied
for $\skult(\omega)$ and $\skulpb(\omega)$, are --
\[
  \qeqle(A(\omega)) = \skule(\omega) \quad \qeqlcons(A(\omega)) = \skulcons(\omega)
\]

\subsection{Models}\label{sec:Models}
\begin{definition}\label{def:model}
  Let $\thT$ be a sketch and $\catA$ an AU.

  A \emph{strict model} of $\thT$ in $\catA$
  is an interpretation of nodes and edges in $\thT$
  as objects (carriers) and morphisms (operations) in $\catA$,
  in a way that respects all the implied relations of the sketch strictly,
  i.e. \emph{up to equality}.

  A \emph{model} of $\thT$ in $\catA$
  is an interpretation of nodes and edges in $\thT$ as objects and morphisms in $\catA$,
  in a way that respects up to equality all the domains, codomains, identities and
  commutativities of the sketch, and up to isomorphism all the universals.
  In other words, the subjects of each universal have to have the appropriate
  universal property, but do not have to be the canonical construction.

  A homomorphism between models of $\thT$ in an AU $\catA$
  comprises a carrier morphism for each node,
  together commuting with the operations in the appropriate way.
  This can be conveniently expressed
  as a model of $\thT$ in the comma category $\catA\downarrow\catA$, also an AU.
  (See~\cite{ArithInd} for results concerning these comma categories and their AU structure,
  and also for the related pseudopullback $\catA\downarrow_{\cong}\catA$.
  )

  We write $\Mod{\thT}{\catA}$ for the category of models of $\thT$ in $\catA$,
  and $\Mods{\thT}{\catA}$ for the full subcategory of strict models.

  If $h\colon\catA\to\catB$ is an AU-functor, then we obtain a functor
  \[
    \Mod{\thT}{h} \colon \Mod{\thT}{\catA} \to \Mod{\thT}{\catB}
    \text{.}
  \]
  If $h$ is a strict AU-functor, then $\Mod{\thT}{h}$ preserves strictness of models.
\end{definition}

As we remarked earlier, any sketch $\thT$ can be treated
as generators and relations for presenting an arithmetic universe,
using the fact that the theory of AUs is cartesian (see~\cite{PHLCC}).
We shall write this as $\AUpres{\thT}$.
It is the AU version of the notion of classifying category,
and we shall call it the \emph{classifying AU} for $\thT$.
It is the analogue of the classifying topos when geometric logic is replaced by an arithmetic form.

The injection of generators provides a strict \emph{generic model} $M_G$ of $\thT$
in $\AUpres{\thT}$,
and then the universal property is that any strict model $M$ of $\thT$ in an AU $\catA$
extends uniquely to a strict AU-functor $h\colon\AUpres{\thT} \to \catA$
for which $\Mod{\thT}{h}$ transforms $M_G$ to $M$ -- up to equality.
(This is analogous to the universal property for classifying toposes,
with strict AU-functors corresponding to the inverse image parts of geometric morphisms,
but note that the AU property is stricter.)

Thus strict models of $\thT$ are in bijection with strict AU-functors out of $\AUpres{\thT}$.
We have already seen that a non-strict AU functor out of $\AUpres{\thT}$
will also give rise to a non-strict model of $\thT$,
the non-strict image of the generic model.
However, the universal property does not allow us to recover the non-strict AU-functor from the model.
Hence the universal algebra is less precise for non-strict models and AU-functors.
In Section~\ref{sec:Extensions} we restrict the notion of sketch in a way that gives better control
over the non-strict models.

\begin{definition}\label{def:reduct}
  Let $f\colon\thT_1\skhom\thT_0$ be a homomorphism of sketches,%
  \footnote{
    Why this order of 1 and 0?
    Because in Section~\ref{sec:ConMaps} we shall think of $f$ as a \emph{map}
    from the space of models of $\thT_0$ to that of $\thT_1$,
    acting by model reduction.
  }
  and $M$ a model of $\thT_0$ in $\catA$.
  Then the \emph{$f$-reduct} of $M$, written $M|f$,
  is the model of $\thT_1$ whose carriers and operations are got by taking those
  for $M$ corresponding by $f$.

  It is a model because the sketch homomorphism transforms all the implied relations of
  $\thT_1$ into implied relations of $\thT_0$.
\end{definition}

Model reduction is functorial with respect to model homomorphisms,
and so the assignment $\thT\mapsto \Mod{\thT}{\catA}$
is the object part of a contravariant category-valued functor
$\Mod{(-)}{\catA}$ on $\mathbf{Sk}_{\skhom}$,
with sketch homomorphisms assigned to model reduction.

Model reduction preserves strictness.

By taking the $f$-reduct of the generic model in $\AUpres{\thT_0}$,
we get a strict model of $\thT_1$ in $\AUpres{\thT_0}$
and hence a strict AU-functor
$\AUpres{f} \colon \AUpres{\thT_1} \to \AUpres{\thT_0}$.

\subsection{Examples of sketches}\label{sec:SkExx}
Here are some examples of sketches.
Again, the notation is adapted to thinking of the sketch as prescribing a class of models
in each AU.

\begin{enumerate}
\item
  The empty sketch $\thone$ has a unique model in any AU.
\item
  The sketch $\thob$ has a single node and its identity edge and nothing else.
  Its models in $\catA$ are the objects of $\catA$.
\item
  Let $\thT$ and $\thU$ be two sketches.
  Their \emph{disjoint union} is called the \emph{product} sketch $\thT\times\thU$.
  Its models are pairs of models of $\thT$ and $\thU$.
  We also use notation such as $\thT^2$ for $\thT\times\thT$.
\item
  Let $\thT$ be a sketch.
  The \emph{hom sketch} $\thT^\to$ is made as follows.
  First, take two disjoint copies of $\thT$ as in $\thT^2$,
  distinguished by subscripts 0 and 1.
  These give two sketch homomorphisms $i_0,i_1 \colon \thT \to \thT^{\to}$.
  Next, for each node $X$ of $\thT$, adjoin an edge $\theta_X\colon X_0 \to X_1$;
  and, for each edge $u\colon X\to Y$ of $\thT$,
  adjoin an edge $\theta_u \colon X_0 \to Y_1$
  together with two commutativities to make a commutative diagram
  \[
    \xymatrix{
      {X_0}
        \ar@{->}[r]^{\theta_X}_{\bullet}
        \ar@{->}[dr]^{\theta_u}
        \ar@{->}[d]_{u_0}
      & {X_1}
        \ar@{->}[d]^{u_1}
      \\
      {Y_0}
        \ar@{->}[r]_{\theta_Y}^{\bullet}
      & {Y_1}
    }
  \]
  Then a model of $\thT^\to$ comprises a pair $M_0,M_1$ of models of $\thT$,
  together with a homomorphism $\theta\colon M_0 \to M_1$.

  The assignment $\thT \mapsto \thT^{\to}$ extends functorially to sketch homomorphisms,
  and then $i_0$ and $i_1$ become natural transformations.
\item
  We shall also write $\thT^{\to\to}$ for the theory of composable pairs of homomorphisms of $\thT$-models,
  and analogously for greater numbers of arrows.
  In fact, for any finite%
  \footnote{
    Actually, finiteness is not important here, as we have not set any finiteness
    conditions on the sketch $\thT$.
    But it will be important for contexts.
  }
  category $\catC$ we can write $\thT^\catC$
  for the theory of $\catC$-diagrams of models of $\thT$.
\end{enumerate}

The existence of $\thT^{\to}$ enables us to define \emph{2-cells} in $\mathbf{Sk}_{\skhom}$.
If $f_0,f_1\colon\thT_1\skhom\thT_0$,
then a 2-cell from $f_0$ to $f_1$ is a sketch homomorphism
$\alpha\colon\thT_1^\to \skhom\thT_0$
such that $i_{\lambda}\alpha = f_{\lambda}$ ($\lambda = 0,1$).
We also say that $\alpha$ is \emph{between} $\thT_0$ and $\thT_1$.

2-cells cannot yet be composed, either vertically or horizontally,
because edges cannot be composed in sketches.
However, we do have \emph{whiskering} on both sides,
using either $\alpha f$ or $f^{\to}\alpha$,
and it has all relevant associativities.

We can also take reducts along 2-cells.
If $M$ is a model of $\thT_0$ in $\catA$,
then the homomorphism $M|\gamma \colon M|f_0 \to M|f_1$
uses the carrier functions of $\thT_1^\to$ as interpreted in $\thT_0$.

\section{Extensions, contexts}\label{sec:Extensions}
In this section we define a class of sketches, the \emph{contexts},
for which every non-strict model can be made strict in a unique way.

What makes this non-trivial is that in general,
strictness has the ability to assert \emph{equalities} between sorts
by making a single node $X$ the subject of two different universals,
for example making it both $A \times B$ and $\List C$.
In a non-strict model this just requires $A \times B \cong \List C$,
whereas strictness would require equality;
and in an AU it can easily happen that the first holds but not the second.
Such equalities are not really the concern of category theory,
so better would be to have universals specifying two nodes $X_1$ and $X_2$
as $A \times B$ and $\List C$ respectively,
and then to specify an isomorphism $X_1\cong X_2$.
Strict models of that are unproblematic.

To enforce the latter kind we shall use each universal with a simple definitional effect,
defining its subjects fresh from some other ingredients (nodes and edges)
defined previously.
This leads to our notion of \emph{extension} of sketches.
To prepare for this, we introduce a notion of \emph{protoextension},
in which the syntactic notion of freshness is represented using categorical coproducts.

We say that a set is \emph{strongly finite} if it is
isomorphic to a finite cardinal $\{1,\ldots,n\}$ for some $n\in\mathbb{N}$.
Equivalently, it is Kuratowski finite, has decidable equality,
and can be equipped with a decidable total order.

\begin{definition}\label{protoext}
  A sketch homomorphism $i'\colon\thU\skhom\thU'$ is a \emph{protoextension} if
  for each sketch sort $\Xi$, we have that $\thU'_{\Xi}$ can be expressed as a coproduct
  $\thU_{\Xi} + \delta\Xi$,
  with $i'_{\Xi}$ a coproduct injection and $\delta\Xi$ strongly finite.
\end{definition}

\begin{proposition}\label{prop:protoext}
  Let $i'\colon\thU\skhom\thU'$ be a sketch homomorphism.
  Then the following are equivalent.
  \begin{enumerate}
  \item
    $i'$ is a protoextension.
  \item
    $i'$ is a pushout of some \emph{strongly finite sketch inclusion},
    by which we mean a sketch monomorphism $i\colon \thT\skhom\thT'$ in which
    $\thT$ and $\thT'$ are strongly finite (i.e. their carriers are).
  \end{enumerate}
\end{proposition}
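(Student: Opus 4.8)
The plan is to prove the two implications separately, exploiting the fact that the category $\mathbf{Sk}_{\skhom}$ of sketches is (up to the identification of homomorphisms agreeing on $\skn,\ske$) a category of models of a cartesian/finite-limit theory, hence it has all the colimits we need, computed componentwise on the underlying many-sorted sets wherever the sketch equations permit.

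\textbf{From (2) to (1).} Suppose $i'$ is a pushout of a strongly finite sketch inclusion $i\colon\thT\skhom\thT'$ along some $f\colon\thT\skhom\thU$. The key point is that pushouts of monomorphisms along arbitrary maps are computed sortwise in each of the free sorts. More precisely, since $\thT$ and $\thT'$ are strongly finite, each $\thT_\Xi\hookrightarrow\thT'_\Xi$ is a decidable inclusion of finite sets, so we may write $\thT'_\Xi = \thT_\Xi + \delta_0\Xi$ with $\delta_0\Xi$ strongly finite. First I would observe that the pushout $\thU'$ is obtained by freely adjoining to $\thU$ the ``new'' data of $\thT'$ — the elements of $\delta_0\Xi$ for each $\Xi$ — subject to the sketch equations relating them to the image of $\thT$ under $f$. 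Because all the sketch equations (the displayed list in Definition~\ref{def:AUSk}) have the shape of a single operator applied to a variable being equal to another such term, adjoining the finitely many new elements never forces identifications among old elements of $\thU$ (that is what monicity of $i$ buys us) nor creates infinitely many derived new elements: each new element of a sort $\Xi$ is named by one of the finitely many generators in $\delta_0\Xi$, possibly with some of its faces/structure maps redirected into $\thU$ via $f$. Hence $\thU'_\Xi\cong\thU_\Xi+\delta\Xi$ with $\delta\Xi$ a quotient-free image of $\delta_0\Xi$, so strongly finite, and $i'_\Xi$ is the coproduct injection. That gives (1).

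\textbf{From (1) to (2).} Conversely, suppose $i'\colon\thU\skhom\thU'$ is a protoextension, so $\thU'_\Xi = \thU_\Xi + \delta\Xi$ with each $\delta\Xi$ strongly finite. I would build the strongly finite sketch $\thT'$ as a ``finite sub-sketch of $\thU'$ large enough to contain all the new data together with everything those new data point at,'' and let $\thT$ be the part of it coming from $\thU$. Concretely: take $\thT'_\Xi$ to be the sub-set of $\thU'_\Xi$ generated by $\delta\Xi$ under all the (finitely many) structure operations of a sketch — faces $\skface_i$, identities $\sknid$, the triangle components, and the universal-to-subject maps $\skupbtri_i,\skupotri_i,\skulpb,\skult,\skule,\skulcons,\skutn,\skuin$ — applied finitely often. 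Since each operation has finite arity and there are finitely many of them, and since $\delta\Xi$ is strongly finite, this closure is strongly finite; call the resulting sub-sketch $\thT'$, and set $\thT = \thT'\cap i'(\thU)$, which is again strongly finite, with $\thT\hookrightarrow\thT'$ a sketch monomorphism $i$. The inclusion $\thT\skhom\thU$ furnishes the map $f$. Then I claim the square
\[
  \xymatrix{
    {\thT}
      \ar@{->}[r]^{i}
      \ar@{->}[d]_{f}
    & {\thT'}
      \ar@{->}[d]
    \\
    {\thU}
      \ar@{->}[r]_{i'}
    & {\thU'}
  }
\]
is a pushout. One checks the universal property directly: a cocone consists of maps out of $\thU$ and out of $\thT'$ agreeing on $\thT$; since $\thU'_\Xi = \thU_\Xi + \delta\Xi$ and $\delta\Xi\subseteq\thT'_\Xi$, the two given maps assemble to a well-defined function on each sort, and it is a sketch homomorphism precisely because every sketch equation instance involving a new element is already an equation in $\thT'$ (by our closure construction) and every one involving only old elements lives in $\thU$. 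Uniqueness is clear since $\thU'$ is generated by $\thU$ together with $\delta\Xi$.

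\textbf{Main obstacle.} The delicate point, and the one I would spend the most care on, is the \emph{finiteness of the closure} in the $(1)\Rightarrow(2)$ direction: I must be sure that closing $\delta\Xi$ under all sketch structure maps stays within a strongly finite set and does not loop back and drag in unboundedly much of $\thU$. This works because the sketch operations only ever go ``from universals to their subjects'' and ``from edges/triangles to their faces,'' never the other way, so the closure is bounded by iterating a fixed finite collection of total functions finitely many times starting from a finite set — there is no recursion that generates new universals from nodes. A secondary subtlety is decidability: strong finiteness requires decidable equality and a decidable order, which I get because finite coproducts of strongly finite sets are strongly finite and sub-sets cut out by decidable predicates (membership in the image of $i'$, or in the generated closure) inherit decidable equality; this is where the hypothesis ``strongly finite'' rather than merely ``Kuratowski finite'' is doing real work, and it is worth a sentence to say so explicitly. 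Everything else is the routine verification that pushouts in $\mathbf{Sk}_{\skhom}$ of monos are computed sortwise, which follows from the cartesian (finite-limit) nature of the theory of sketches.
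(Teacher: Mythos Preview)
Your proof is correct, and for (2)$\Rightarrow$(1) it matches the paper's approach: both explicitly construct the pushout sortwise as $\thU_\Xi + \delta\Xi$, with the structure on the $\delta$-part inherited from $\thT'$ and redirected through $f$ where it lands in $\thT$.

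For (1)$\Rightarrow$(2) you take a genuinely different route. The paper builds $\thT'$ as the sketch \emph{freely generated} by the elements of the $\delta\Xi$'s, imposing only those relations that record when an operation applied to a $\delta$-element stays in $\delta$; it then obtains $\thT$ as the pullback of $i'$ along the induced $f'\colon\thT'\skhom\thU'$, and verifies the pushout property by re-running the (2)$\Rightarrow$(1) construction. Your $\thT'$ is instead the sub-sketch of $\thU'$ obtained by \emph{closing} the $\delta\Xi$'s under the operations, with $\thT = \thT'\cap i'(\thU)$, and you check the pushout property directly. Your approach is more elementary and concrete (no free sketches, no pullbacks), and your $\thT'$ actually embeds in $\thU'$, which the paper's need not. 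The paper's approach is more categorical and makes the reduction to (2)$\Rightarrow$(1) do the verification work.

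One small correction to your ``main obstacle'' paragraph: it is not quite true that the sketch operations never go upward --- $\sknid\colon\skn\to\ske$ does. This does not break your finiteness argument, because the equations $\sknid\,\skedom = \sknid\,\skecod = \Id$ ensure that the faces of $\sknid(X)$ are $X$ itself, so applying $\sknid$ to the accumulated nodes produces finitely many identity edges and no further nodes. But you should say this rather than claim the operations are purely downward.
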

\begin{proof}
  (2) $\Rightarrow$ (1):
  Let $i\colon \thT \skhom \thT'$ be a strongly finite sketch inclusion.
  For each sketch sort $\Xi$, we can write $\thT'_{\Xi}$ as a coproduct
  $\thT'_{\Xi} = \thT_{\Xi} + \delta\Xi$.
  (Informally in such a situation, we shall often write $\thT'$ as $\thT+\delta\thT$,
  although this is not a coproduct of sketches.
  $\delta\thT$ is not a sketch in its own right, as some of its structure may lie in $\thT$.)

  Now let $f\colon \thT \skhom \thU$ be an arbitrary sketch homomorphism.
  Then the pushout $i'\colon\thU\skhom\thU'$ of $i$ along $f$ can be constructed as follows.

  For each sketch sort $\Xi$, we let $\thU'_{\Xi} = \thU_{\Xi}+\delta\Xi$.
  For elements of $\thU_{\Xi}$, their structure is determined as in $\thU$.
  Now suppose $\omega\in\delta\Xi$.
  In $\thT+\delta\thT$, each structural element of $\omega$
  (i.e. the result of applying a sketch operator)
  is in either $\thT$ or $\delta\thT$.
  If the latter, then we keep it there in $\thU'$.
  If the former, then we apply $f$ to get it in $\thU$.
  We obtain a commutative diagram of sketches that is readily verified to be a pushout:
  \[
    \xymatrix{
      {\thU+\delta\thT}
      & {\thT+\delta\thT}
        \ar@{->}[l]_{f+\delta\thT}
      \\
      {\thU}
        \ar@{->}[u]^{i'}
      & {\thT}
        \ar@{->}[l]^{f}
        \ar@{->}[u]_{i}
    }
  \]
  From the construction, $i'$ is clearly a protoextension.

  (1) $\Rightarrow$ (2):
  Use the elements of the $\delta\Xi$s as generators for a sketch $\thT'$,
  with relations to say that the sketch operations in $\thU'$ are preserved
  \emph{insofar as} they stay in the $\delta\Xi$s.
  Then $\thT'$ is strongly finite,
  and the inclusion of the $\delta\Xi$s in $\thU'$ induces a sketch homomorphism
  $f'\colon\thT'\skhom\thU'$.

  Let $\thT$ be the pullback of $i'$ and $f'$, with projections $i$ and $f$.
  $i$ is monic, because $i'$ is.
  Also, in a coproduct the images of the injections are decidable subobjects,
  and it follows that the carriers of $\thT$ are decidable subobjects of those of $\thT'$,
  and so $\thT$ too is strongly finite.

  Applying the construction of (2) $\Rightarrow$ (1), we recover $i'$.
\end{proof}

It was already clear from the definition that protoextensions are closed
under composition.
From Proposition~\ref{prop:protoext} it is also clear that protoextensions $i'$ are closed under
pushout along any sketch homomorphism $g$.
The pushout is called the \emph{reindexing} of $i'$ along $g$, and written $g(i')$.

\subsection{Extensions: the definition}\label{sec:ExtDef}
In the following definition, central to the whole paper,
we restrict our proto-extensions by restricting the strongly finite sketch inclusions $i$
of Proposition~\ref{prop:protoext}.
First we define a finite family of inclusions $i\colon \thT \skhom \thT+\delta\thT$
that are generic for \emph{simple extensions},
and then a general \emph{extension} (written $\skext$) is a  composite of simple extensions.

For each kind of simple extension, using an inclusion $i$,
the sketch homomorphism $f\colon\thT\skhom\thU$ that we reindex along
can be understood as a \emph{data configuration} in $\thU$,
some tuple of elements satisfying some equations.
Thus each kind of simple extension can be understood as a sketch transformation
that takes \emph{data} (given by $f$) and delivers a \emph{delta}, according to Proposition~\ref{prop:protoext}.

Since any sketch homomorphism will transform extension data to extension data,
we see that reindexing (as sketch pushout) is got by applying the same extension to the
transformed data.
For an extension $c\colon \thT_1 \skext \thT'_1$, we shall typically write a reindexing square as
\begin{equation}\label{eq:reindex}
  \xymatrix{
    {\thT'_0}
    & {\thT'_1}
      \ar@{->}[l]_{\varepsilon}
    \\
    {\thT_0}
      \ar@{->}[u]^{f(c)}
    & {\thT_1}
      \ar@{->}[u]_{c}
      \ar@{->}[l]^{f}
  }
  \text{.}
\end{equation}

\begin{definition}\label{def:extn}
  A \emph{simple extension} is a proto-extension got as a pushout of one of the following
  strongly finite sketch inclusions $i\colon\thT\skhom\thT+\delta\thT$.
  Where we don't specify $\delta \Xi$, it is empty.
  \begin{enumerate}
  \item
    (Adding a new \emph{primitive node})
    No data (i.e. $\thT$ is $\thone$).
    Deltas:
    \begin{align*}
      \delta \skn &= \{\ast\} \\
      \delta \ske &= \{\sknid(\ast)\}
    \end{align*}
  \item
    (A simple \emph{functional} extension, by a new \emph{primitive edge})
    Data: $(X,Y)\in\skn \times \skn$.
    Delta:
    \[
      \delta \ske = \{\xymatrix@1{{X} \ar@{.>}[r] & {Y}}\}
      \text{.}
    \]
    In other words $\delta \ske = 1 = \{\ast\},\skedom(\ast) = X$, $\skecod(\ast) = Y$.
    We shall use similar informal notation in the other cases.
    Note that the ``delta'' edges are shown dotted.
  \item
    (Adding a commutativity)
    Data:
    $\xymatrix@1{
      {}
        \ar@{->}[r]_{u}
        \ar@{->}@/^0.5pc/[rr]^{w}
      & {}
        \ar@{->}[r]_{v}
      & {}
    }$.
    Delta:
    \[
      \delta\sktri = \{
        \xymatrix@1{
          {}
            \ar@{->}[r]_{u}
            \ar@{->}@/^1pc/[rr]^{w}_{\bullet}
          & {}
            \ar@{->}[r]_{v}
          & {}
        }
      \}\quad (uv\skdiag w)
      \text{.}
    \]
    In other words $\delta\sktri = \{\ast\}$ with
    $\sktril(\ast) = u$, $\sktrir(\ast) = v$, $\sktric(\ast) = w$.
  \item
    (Adding a terminal)
    No data. Deltas:
    \begin{align*}
      \delta \skut &= \{\ast\} \\
      \delta \skn &= \{\skutn(\ast)\} \\
      \delta \ske &= \{\sknid(\skutn(\ast))\}
    \end{align*}

    Adding an initial object is similar.
  \item
    (Adding a pullback)
    Data:
    $\xymatrix{
      {}
        \ar@{->}[r]^{u_1}
      & {}
      & {}
        \ar@{->}[l]_{u_2}
    }$.
    Deltas:
    \begin{align*}
      \delta\skupb &= \left\{
      \raisebox{1.5\height}{\xymatrix{
        {\mathsf{P}}
          \ar@{.>}[r]^{\mathsf{p}^2}_{\bullet}
          \ar@{.>}[dr]^{\mathsf{p}}
          \ar@{.>}[d]_{\mathsf{p}^1}
        & {}
          \ar@{->}[d]^{u_2}
        \\
        {}
          \ar@{->}[r]_{u_1}^{\bullet}
        & {}
      }}
      \right\} \\
      \delta\sktri &= \{ \mathsf{p}^1 u_1 \skdiag \mathsf{p},
        \mathsf{p}^2 u_2 \skdiag \mathsf{p} \} \\
      \delta\ske &=
        \{ \mathsf{p}^1, \mathsf{p}, \mathsf{p}^2, \sknid(\mathsf{P}) \} \\
      \delta\skn &= \{ \mathsf{P} \}
    \end{align*}
    Adding a pushout is similar.
  \item
    (Adding a list object)
    Data: $A\in\skn$.
    Deltas:
    \begin{align*}
      \delta \skul &=
        \{ \ast =
          (\xymatrix@1{{T} \ar@{.>}[r]^{\qeqle} & {L} & {P} \ar@{.>}[l]_{\qeqlcons}})
        \} \\
      \delta \skut &= \{ \skult(\ast) = T \} \\
      \delta \skupb &=
        \left\{ \skulpb(\ast) =
          \raisebox{1.5\height}{\xymatrix{
            {P}
              \ar@{.>}[r]^{\mathsf{p}^2}_{\bullet}
              \ar@{.>}[dr]^{\mathsf{p}}
              \ar@{.>}[d]_{\mathsf{p}^1}
            & {L}
              \ar@{.>}[d]^{!_L}
            \\
            {A}
              \ar@{.>}[r]_{!_A}^{\bullet}
            & {T}
          }}
        \right\} \\
      \delta\sktri &= \{ \mathsf{p}^1 !_A \skdiag \mathsf{p},
                         \mathsf{p}^2 !_L \skdiag \mathsf{p} \} \\
      \delta\ske  &= \{ \qeqle, \qeqlcons,
        \mathsf{p}^1, \mathsf{p}, \mathsf{p}^2, !_A, !_L,
        \sknid(T), \sknid(L), \sknid(P)
      \} \\
      \delta\skn &= \{ T,L,P \}
    \end{align*}
  \end{enumerate}
  An \emph{extension} of sketches is a proto-extension that can be expressed
  as a finite composite of simple extensions.
  We write $\thT_1\skext\thT_2$.

  An \emph{AU-context} is an extension of the empty sketch $\thone$.
\end{definition}

\begin{proposition}
  Let $c\colon \thT_1 \skext \thT_2$ be an extension of sketches.
  Then for each fresh node or edge $\alpha$ in $\thT_2$ there is an AU expression
  $w_{\alpha}$, well defined from the structure of $\thT_2$,
  by which, in any strict model,
  the interpretation of $\alpha$ can be found from those of the primitives and $\thT_1$.
\end{proposition}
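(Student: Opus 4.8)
The plan is to prove this by induction on the length of the composite of simple extensions defining $c\colon \thT_1 \skext \thT_2$. For a single simple extension $\thT \skext \thT+\delta\thT$ obtained by reindexing one of the generic inclusions of Definition~\ref{def:extn} along data $f\colon\thT\skhom\thU$, I would go case by case through the six kinds of simple extension and, for each fresh node or edge $\alpha$ in $\delta\thT$, write down the AU expression $w_\alpha$ in terms of the quasiequational operators from Definition~\ref{def:AUQuasieq} and the interpretations of the data $f$-elements (which lie in $\thT$). For a primitive node or primitive edge there is genuinely no expression --- but those are exactly the ``primitives'' excluded in the statement, so there is nothing to do. For a terminal we use $w_{\skutn(\ast)} = \qeqt$ and its identity; for a pullback, $w_{\mathsf{P}} = \qeqpb{u_1}{u_2}$, $w_{\mathsf{p}^i} = \qeqproj{u_1}{u_2}^i$, $w_{\mathsf{p}} = \qeqproj{u_1}{u_2}$, with the data edges $u_1,u_2$ already available; pushouts, initial objects are dual. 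For a list object we use $w_L = \qeql(A)$, $w_{\qeqle} = \qeqle(A)$, $w_{\qeqlcons} = \qeqlcons(A)$, together with the terminal and pullback expressions for the auxiliary $T$ and $P$ and the structural edges $!_A, !_L$. Adding a commutativity introduces no fresh node or edge, so again nothing to do. The point in each case is that the implied relations of the sketch (listed in Section~\ref{sec:AUSk}) force, in any strict model, the interpretation of $\alpha$ to equal the evaluation of $w_\alpha$ --- this is immediate from the way those relations were written, since strictness means ``up to equality'' and the relations literally equate $\alpha$ with a term built from the data.

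For the inductive step, suppose $c$ factors as $\thT_1 \skext \thT' \skext \thT_2$ where the second extension is simple. By the inductive hypothesis every fresh node or edge $\beta$ of $\thT'$ has an expression $w_\beta$ over the primitives and $\thT_1$. The simple extension $\thT' \skext \thT_2$ contributes finitely many fresh $\alpha$, each with an expression $w_\alpha^{(\thT')}$ over $\thT'$ by the base case; substituting $w_\beta$ for each $\thT'$-fresh $\beta$ occurring in $w_\alpha^{(\thT')}$ (and leaving $\thT_1$-elements and primitives alone) yields the required $w_\alpha$ over the primitives and $\thT_1$. I should note that the data $f$ used in the simple extension $\thT' \skext \thT_2$ picks out its elements in $\thT'$, so the substitution is well-defined and the ``well-definedness from the structure of $\thT_2$'' claim follows because the extension structure records, step by step, which data each universal was built from.

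I expect the main obstacle --- or at least the only place requiring care --- to be bookkeeping rather than mathematics: making precise ``well defined from the structure of $\thT_2$''. The subtlety is that a node or edge of $\thT_2$ may arise as fresh in more than one way only if the extension presentation is ambiguous, so I would either (i) fix a choice of factorization of $c$ into simple extensions and define $w_\alpha$ relative to it, remarking that the value in any strict model is independent of the choice, or (ii) argue that freshness together with the coproduct decompositions of Definition~\ref{protoext} pins down, for each $\alpha$, a unique simple extension step at which it was introduced and unique data from which it came, so $w_\alpha$ is canonically determined. Option (ii) is cleaner and I would pursue it: in the coproduct $\thT_{2,\Xi} = \thT_{1,\Xi} + \delta\Xi_1 + \cdots + \delta\Xi_k$ each fresh $\alpha$ lies in exactly one summand $\delta\Xi_j$, identifying its creating step, and within that step the generic inclusion $i$ of Definition~\ref{def:extn} determines the shape of $w_\alpha$ while the reindexing map supplies the data. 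The rest --- verifying that strict models validate $\alpha = \llbracket w_\alpha\rrbracket$ --- is then a direct unwinding of the implied relations and the universal-algebra semantics of Section~\ref{sec:Models}.
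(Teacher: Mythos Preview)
Your proposal is correct and follows essentially the same inductive strategy as the paper. The one noteworthy difference in emphasis is how well-definedness is handled: the paper makes this immediate by observing that each fresh node or edge of $\thT_2$ has a \emph{unique} syntactic description read directly off the sketch structure --- e.g.\ a fresh node is exactly one of $\skutn(\omega)$, $\skuin(\omega)$, $\skedom\sktric\skupbtri_1(\omega)$, $\skecod\sktric\skupotri_1(\omega)$, or $\skecod\skule(\omega)$ for a unique universal $\omega$ --- and this uniqueness persists under subsequent simple extensions because universals are only introduced with fresh subjects. This is your option~(ii), and it is the cleaner route: the expression $w_\alpha$ is then determined by $\thT_2$ itself rather than by a chosen factorization of $c$. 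The paper also flags one small subtlety you glossed over: when a pullback universal arises as $\skulpb(\omega')$ for a list universal $\omega'$, its data edges $u_i = \sktrir\skupbtri_i(\omega)$ are themselves fresh (they are the $!_A$ and $!_L$), so their expressions come from $\omega'$ rather than from prior data.
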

\begin{proof}
  By inspecting the cases, we see that for a simple extension each fresh node or edge
  can be described \emph{uniquely} in one of the following ways.

  For nodes:
  the node is primitive or takes one of the forms
  \[
    \begin{array}{lll}
      \skutn(\omega) & (\omega\in\skut) & \qeqt   \\
      \skuin(\omega) & (\omega\in\skui) & \qeqi \\
      \skedom\sktric\skupbtri_1(\omega) & (\omega\in\skupb)
        & \qeqpb{u_1}{u_2}   \\
      \skecod\sktric\skupotri_1(\omega) & (\omega\in\skupo)
        & \qeqpo{u_1}{u_2}  \\
      \skecod\skule(\omega) & (\omega\in\skul) & \qeql(A)
    \end{array}
  \]

  For edges:
  the edge is primitive or takes one of the forms
  \[
    \begin{array}{lll}
      \sknid(X) & (X\in\skn) & \qeqid(X) \\
      \sktric\skupbtri_1(\omega)\text{ or }\sktril\skupbtri_i(\omega)
        & (\omega\in\skupb)
        & \qeqproj{u_1}{u_2} \text{ or }  \qeqproj{u_1}{u_2}^{i} \\
      \sktric\skupotri_1(\omega)\text{ or }\sktrir\skupotri_i(\omega)
        & (\omega\in\skupo)
        & \qeqinj{u_1}{u_2} \text{ or }  \qeqinj{u_1}{u_2}^{i}  \\
      \skule(\omega)\text{, }\skulcons(\omega)
        &  (\omega\in\skul)
        &  \qeqle(A) \text{, } \qeqlcons(A)  \\
      \sktrir\skupbtri_1\skulpb(\omega)\text{ or }\sktrir\skupbtri_2\skulpb(\omega)
        &  (\omega\in\skul)
        &  \qeqtfill(A) \text{ or }\qeqtfill(\qeql(A))
    \end{array}
  \]
  These facts are preserved by subsequent simple extensions,
  since those forms are only introduced for fresh nodes or edges.
  It follows that the facts remain true for the composite extension.

  We can now apply an induction on the number of composed simple extensions,
  and use the equations for strict models that are imposed by the sketch structure.
  We look explicitly at universals for pullbacks and list.
  Other situations are similar or easier.

  First, consider a simple extension in the form of a pullback universal $\omega$,
  defined on the configuration
  $\xymatrix{
    {}
      \ar@{->}[r]^{u_1}
    & {}
    & {}
      \ar@{->}[l]_{u_2}
  }$.
  The relations for such a universal tell us that the fresh edge $\sktril\skupbtri_1(\omega)$
  has to be interpreted as $\qeqproj{u_1}{u_2}^{1}$,
  and we use induction to find the expressions for $u_1$ and $u_2$.
  (The base case is if they are primitive or in $\thT_1$.)
  The other fresh edges and the fresh node are dealt with in a similar way.
  Note that if $\omega=\skulpb(\omega')$ for some $\omega' \in \skul$,
  then the subjects of $\omega$ are treated in the same way,
  but $u_i = \sktrir(\skupbtri_i(\omega))$ gets its expression from $\omega'$.

  Now consider a simple extension in the form of a list universal $\omega$, on object $A$.
  All the fresh nodes and edges have expressions in terms of $A$.
  For $\skule(\omega)$ and $\skulcons(\omega)$ and their codomain this is clear.
  Next, from the terminal universal $\skult(\omega)$ we have $\skutn(\skult(\omega))=\qeqt$.
  Because this appears as a vertex in the pullback square $\skulpb(\omega)$,
  it follows from the AU axioms that
  $\sktrir(\skupbtri_1(\skulpb(\omega)))=\qeqtfill_A$ and
  $\sktrir(\skupbtri_2(\skulpb(\omega)))=\qeqtfill_{\qeql(A)}$.
  Since these are $u_1$ and $u_2$ in the treatment of the pullback universals,
  it only remains to deal with the easy case of the identity morphisms.
\end{proof}

Note that a primitive edge can acquire equality with an AU-expression by
subsequently added commutativities.
We shall use this later for introducing AU operators that have not been mentioned
so far in extensions.

\subsection{Strictness results}\label{sec:StrictResults}
The reason for introducing extensions was for an important property that
non-strict interpretations can be reinterpreted strictly in a unique way.
The following definition and lemma will make this precise,
albeit in a generality whose usefulness will only be seen in sequel papers.

\begin{definition}
Let $\thT\skext\thT'$ be a sketch extension.
A model of $\thT'$ is \emph{strict} for the extension if, for each universal,
each subject node or edge is equal to the result of its expression.
\end{definition}
Note that a model of $\thT'$ is strict in its own right iff it is strict for the extension
and its $\thT$-reduct is strict.

\begin{lemma}\label{lem:extReindex}
  Suppose, as in the diagram below, an extension $\thT_1\skext\thT'_1$
  is reindexed along a sketch homomorphism $\thT_1 \skhom \thT_0$.
  Suppose also that in some AU $\catA$ we have models $M_0$ and $M'_1$ of $\thT_0$ and $\thT'_1$,
  with an isomorphism $\phi\colon M_0|\thT_1 \cong M'_1 | \thT_1$.

  \[
    \xymatrix{
      {M'_0}
      & & {\thT'_0}
      & {\thT'_1}
        \ar@{->}[l]_{\varepsilon}
      & {M'_0|\thT'_1}
        \ar@{.>}[r]^{\phi'}_{\cong}
      & {M'_1}
      \\
      {M'_0|\thT_0}
        \ar@{=}[r]
      & {M_0}
      & {\thT_0}
        \ar@{->}[u]^{f(\skext)}
      & {\thT_1}
        \ar@{->}[l]^{f}
        \ar@{->}[u]_{\skext}
      & {M_0|\thT_1}
        \ar@{->}[r]^{\phi}_{\cong}
      & {M'_1|\thT_1}
    }
  \]

  Then there is a unique model $M'_0$ of $\thT'_0$
  and isomorphism $\phi'\colon M'_0 | \thT'_1 \cong M'_1$ such that
  \begin{enumerate}
  \item
    $M'_0|\thT_0 = M_0$,
  \item
    $M'_0$ is strict for the extension $\thT_0\skext\thT'_0$,
  \item
    $\phi' | \thT_1 = \phi$, and
\item
    $\phi'$ is equality on all the primitive nodes for the extension $\thT_1\skext\thT'_1$.
  \end{enumerate}
\end{lemma}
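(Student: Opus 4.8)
The plan is to induct on the number $n$ of simple extensions whose composite is $\thT_1\skext\thT'_1$. When $n=0$ everything is trivial: $\thT'_1=\thT_1$ and the reindexing collapses to $\thT'_0=\thT_0$, so we take $M'_0=M_0$ and $\phi'=\phi$; conditions~(2) and~(4) are vacuous and uniqueness is immediate. For the inductive step I would split off the last simple extension, writing the composite as $\thT_1\skext\thT''_1\skext\thT'_1$. Since reindexing is a pushout and pushouts compose, the reindexed extension $\thT_0\skext\thT'_0$ factors through the reindexing $\thT_0\skext\thT''_0$ of the first $n-1$ steps (with some pushout leg $f'\colon\thT''_1\skhom\thT''_0$) followed by the reindexing $\thT''_0\skext\thT'_0$ of the last step along $f'$. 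Restricting $M'_1$ along $\thT''_1\skhom\thT'_1$ and applying the inductive hypothesis to $\thT_1\skext\thT''_1$ and $\phi$ yields a unique intermediate $M''_0$ and $\phi''\colon M''_0|\thT''_1\cong M'_1|\thT''_1$. It then remains to handle a \emph{single} simple extension: given $M''_0$, $M'_1$ and $\phi''$, extend $M''_0$ strictly to the fresh elements, and extend $\phi''$ to an isomorphism $\phi'$ onto $M'_1$ that is the identity on the primitive nodes adjoined in this last step. Reassembling, and using that strictness for a composite extension unpacks as strictness for each simple extension in the chain, recovers all four conditions for the whole, and the same bookkeeping run backwards gives uniqueness.

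For the single-step core I would invoke the remark following Proposition~\ref{prop:protoext} that the reindexed simple extension is the \emph{same} construction applied to the data transported by $f'$ (now interpreted in $\catA$ by $M''_0$), with $\varepsilon$ acting as the identity on the fresh elements. Then $M'_0$ is completely forced: on $\thT''_0$ it is $M''_0$; the subject node and edges of each fresh universal (terminal, pullback, pushout, list) must, by strictness, be the corresponding canonical AU-construction on the $M''_0$-data ($\qeqt$, $\qeqi$, a canonical pullback or pushout, or $\qeql$ with its $\qeqle,\qeqlcons$); fresh identity edges are identities; each fresh \emph{primitive node} is the $M'_1$-value, by condition~(4); and each fresh \emph{primitive edge}, whose endpoints are old nodes, is forced by requiring $\phi'$ to be a homomorphism (its components at the old endpoints being already fixed by $\phi''$). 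Dually, $\phi'$ agrees with $\phi''$ on the nodes of $\thT''_1$, is the identity on fresh primitive nodes, is the unique comparison map on the terminal/pullback/pushout node of a fresh universal (unique since a terminal receives a unique map and pullback/pushout legs are jointly monic resp.\ epic), and on a fresh list node is the unique isomorphism of list objects induced by $\phi''$ on the underlying object, which exists and is unique by the functoriality and recursion property of $\qeql$ recorded in Remark~\ref{rem:listf}.

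The verification then has two halves. First, that $M'_0$ really is a model of $\thT'_0$: the domain/codomain/identity clauses hold by construction; each fresh universal has the required universal property because its subjects are the genuine canonical construction in $\catA$; the fresh commutativities attached to a pullback/pushout/list universal hold because canonical projections, injections and list-structure maps satisfy them by definition; and the one clause that actually uses $\phi''$ is that, when the simple extension merely adjoins a commutativity among old edges, $M''_0$ satisfies that commutativity because $\phi''$ transports it from $M'_1$, which satisfies it as a model of $\thT'_1$. Second, that $\phi'\colon M'_0|\thT'_1\cong M'_1$ is an isomorphism of models: it is a homomorphism on old edges since $\phi''$ is, on fresh edges by the very equations defining the fresh interpretations and comparison maps, and each component is invertible (on old nodes by $\phi''$, on fresh universal nodes by construction, on fresh primitive nodes because they are identities). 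Uniqueness is this same list read as a sequence of forced choices: conditions~(1)--(4) together with ``$M'_0$ is a model and $\phi'$ an isomorphism of models'' leave no freedom.

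The step I expect to be the genuine obstacle is the list-object case of the single-step construction. Unlike the finite-limit and finite-colimit cases, where the comparison isomorphism is pinned down by joint monicity (resp.\ joint epicity) of the projection (resp.\ injection) legs, the comparison map on a fresh list node must be produced from, and shown unique by, the recursive universal property of parametrized list objects, so Remark~\ref{rem:listf} and the argument behind it do the real work there. The only other point needing care is organizational rather than mathematical: keeping track, through the chain of reindexing pushouts, of exactly which data each reindexed simple extension acts on in $\thT_0$ and in $\thT''_0$ — which is precisely what Proposition~\ref{prop:protoext} and its closure-under-pushout corollary are set up to control.
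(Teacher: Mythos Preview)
Your proposal is correct and follows essentially the same approach as the paper: reduce to simple extensions by induction, then do the case analysis (primitive node, primitive edge, commutativity, universal), with the universal case forced by strictness on the $M'_0$ side and by uniqueness of fillins on the $\phi'$ side. The paper's proof is terser---it treats only the list universal explicitly, noting the others are similar---but your more detailed breakdown, including the observation that the list case combines a terminal, a product, and the list node proper, and your isolation of Remark~\ref{rem:listf} as the key input for that case, are all to the point and match what the paper does implicitly.
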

\begin{proof}
  It suffices to cover the cases for a simple extension $\thT_1\skext\thT'_1$.

  If the extension adjoins a primitive node $X$,
  then we can and must take its carrier in $M'_0$ to be equal to its carrier in $M'_1$,
  and the carrier function in $\phi'$ to be the identity.

  Suppose the extension adjoins a primitive edge $u\colon X \to Y$.
  Then $\phi'$ must equal $\phi$,
  and to preserve the homomorphism property we can and must define the operation for
  $u$ in $M'_0$ to be $\phi(X)M'_1(u)\phi^{-1}(Y)$, using the operation in $M'_1$.

  If the extension adjoins a new commutativity, then the morphism equation already holds
  in $M'_1|\thT_1$ and hence in $M_0$, so we can and must take $M'_0$ and $\phi'$ to be given
  by the same data as $M'_1$ and $\phi$.

  It remains only to examine the case where the extension adds a universal.
  We consider the case of a list universal,
  as the others are similar (and easier).
  $M'_0$ has to interpret the new nodes and edges in the canonical way.
  In particular, $T$, $L$ and $P$ are $\qeqt$, $\qeql(A)$ and $A\times\qeql(A)$.
  Then the universal properties (of terminal object, list object and binary product)
  give canonical isomorphisms between those canonical interpretations in $M'_0$
  and the corresponding interpretations (possibly non-canonical) in $M'_1$.
  The corresponding carrier morphisms of $\phi'$ can be defined to be
  those canonical isomorphisms,
  and indeed by the homomorphism properties and uniqueness of fillins they must be so defined.
\end{proof}

By considering the case where $\thT_0 = \thT_1 = \thone$, we obtain --

\begin{corollary}\label{cor:strictModelContext}
  Let $\thT$ be a context, $\catA$ an AU, and $M_1$ a model of $\thT$.
  Then there is a unique strict model $M_0$ of $\thT$ and isomorphism $\phi\colon M_0 \cong M_1$.
\end{corollary}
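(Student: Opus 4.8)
The plan is to obtain the corollary as the promised specialisation of Lemma~\ref{lem:extReindex}, taking $\thT_0 = \thT_1 = \thone$ and the reindexing homomorphism to be the identity $\id\colon\thone\skhom\thone$. Since $\thT$ is a context it is an extension $\thone\skext\thT$, and reindexing this extension along $\id_{\thone}$ returns the same extension, so in the notation of the lemma $\thT'_0 = \thT'_1 = \thT$ and $\varepsilon$ is the identity. The data required by the lemma are all trivial: $M_0$ is the unique model of $\thone$ in $\catA$, $M'_1$ is our given model $M_1$, and $\phi\colon M_0|\thone \cong M'_1|\thone$ is the unique isomorphism between the two copies of that unique model.

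First I would apply the lemma verbatim to produce a model $M'_0$ of $\thT$ and an isomorphism $\phi'\colon M'_0 \cong M_1$ (here $M'_0|\thT'_1 = M'_0$ because $\varepsilon$ is the identity) satisfying conditions (1)--(4). Then I would invoke the remark recorded just before the lemma: a model of $\thT$ is strict in its own right precisely when it is strict for the extension $\thone\skext\thT$ and its $\thone$-reduct is strict. The $\thone$-reduct of $M'_0$ is the unique, hence trivially strict, model of $\thone$, and condition~(2) says $M'_0$ is strict for the extension; so $M'_0$ is a strict model of $\thT$. Setting $M_0 := M'_0$ and $\phi := \phi'$ gives the existence half.

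For uniqueness I would appeal to the uniqueness clause of Lemma~\ref{lem:extReindex}, which pins down the pair $(M'_0,\phi')$ among those satisfying (1)--(4). Any strict model $M_0$ of $\thT$ equipped with an isomorphism to $M_1$ that is the identity on the primitive nodes automatically satisfies (1)--(4): its $\thone$-reduct is the unique model of $\thone$ (condition~1), strictness in its own right entails strictness for the extension (condition~2), the reduct of the isomorphism to $\thone$ is the unique isomorphism $\phi$ (condition~3), and (4) is hypothesised; hence it coincides with $(M'_0,\phi')$. This is the one point deserving care: the uniqueness genuinely relies on the normalisation ``$\phi$ is the identity on the primitive nodes'' supplied by clause~(4) of the lemma, without which, e.g.\ for $\thT = \thob$, any object isomorphic to $M_1$ would give a competing strict model. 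Granting that normalisation, no obstacle remains and the corollary is a direct instantiation of the lemma.
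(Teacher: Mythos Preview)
Your proposal is correct and follows exactly the approach indicated in the paper, which simply states that the corollary is obtained ``by considering the case where $\thT_0 = \thT_1 = \thone$'' in Lemma~\ref{lem:extReindex}. Your observation that the uniqueness genuinely requires the normalisation from clause~(4) --- that $\phi$ be the identity on primitive nodes --- is a valid point about how the corollary's uniqueness claim should be read, and it is good that you made this explicit.
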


It follows that if $\thT$ is a context,
and $h\colon\catA\to\catB$ is a \emph{non-}strict AU-functor,
then we get a functor $\Mods{\thT}{h}\colon\Mods{\thT}{\catA}\to\Mods{\thT}{\catB}$.
Given a strict model $M$ in $\catA$, composing with $h$ gives a non-strict model in $\catB$,
and we can then take the unique strict model isomorphic to it.
An important example is when $h$ is the inverse image part of a geometric morphism
between two toposes with natural number objects.

\subsection{Examples of contexts}\label{sec:ConExx}
Here are some examples of contexts. (cf. Section~\ref{sec:SkExx}.)

\begin{enumerate}
\item 
  The sketches $\thone$ and $\thob$ are both contexts.
\item 
  If $\thT_0$ and $\thT_1$ are both contexts, then so is $\thT_0\times\thT_1$.
  To be specific, we shall adjoin the ingredients of $\thT_0$ first,
  so that $\thT_0\skext\thT_0\times\thT_1$ is an extension
  and for $\thT_1$ we just have a homomorphism $\thT_1\skhom\thT_0\times\thT_1$.
\item 
  If $\thT$ is a context, then so is $\thT^\to$.
  We take it that $i_0\colon\thT\skext\thT^{\to}$ is the extension.

  Similarly, $\thT^{\to\to}$ is a context, with extensions
  $\xymatrix@1{
    {\thT} \ar@{->}[r]^{i_0}_{\skext}
    & {\thT^{\to}} \ar@{->}[r]^{i_{01}}_{\skext}
    & {\thT^{\to\to}}
  }$,
  where $i_{01}$ is the reindexing $i_1(i_0)$.

  More generally, for any strongly finite
  category $\catC$ we have that $\thT^\catC$ can be made a context.
  The order of simple extensions for it will depend on a total order given to each finite set involved.
\item 
  If $\thT$ is a context, then it has an extension $\thT^{ns}$ whose strict models are the
  \emph{non-strict} models of $\thT$.
  For each non-primitive node $X$,
  we adjoin a primitive node $X'$ together with an isomorphism $X'\cong X$.

  Note that we do need $\thT$ to be a context here, not an arbitrary sketch.
  A model of $\thT^{ns}$ is actually an isomorphic pair of two models,
  one strict and the other not.
  We need Corollary~\ref{cor:strictModelContext} to get this pair from any non-strict model.
\item 
  Without going into details, there is a context $\mathbb{R}$ for the theory of Dedekind sections.
  It is defined as outlined in~\cite{ArithInd}.
  First, the natural numbers $\mathbb{N}$ can be defined as $\qeql(1)$.
  Their (decidable) order and arithmetic can be defined using the universal property.
  Then the rationals $\mathbb{Q}$ can be defined by standard techniques,
  together with their decidable order and arithmetic.
  Next, two nodes $L$ and $R$ are adjoined,
  with edges to $\mathbb{Q}$ and conditions to make them monic.
  Finally the various axioms for Dedekind sections are imposed.
\item 
  For various kinds of presentation of locales,
  there are context extensions $\thT_0\skext\thT_1$
  where a model of $\thT_0$ is a presentation,
  and one of $\thT_1$ is a presentation equipped with a point of the corresponding locale.

  The same principle also applies in formal topology,
  with an inductively generated formal topology understood as a presentation.

  For example, suppose we take the formal topologies as defined in~\cite{CSSV:IndGenFT}.
  First we declare the base $B$, a poset.
  Next, the cover $\vartriangleleft_0$ can be adjoined as a node,
  with an edge to $B$.
  A node $C$ is adjoined for a disjoint union of all the covering sets,
  with an edge to $\vartriangleleft_0$.
  The conditions on these can also be expressed using AU structure in a context $\thT_0$.
  For $\thT_1$ we adjoin to $\thT_0$ a monic into $B$,
  together with conditions to make it a formal point.

  Note that we have not attempted here to extract the full cover $\vartriangleleft$.
\end{enumerate}

\section{Equivalence extensions}\label{sec:EquivExt}

An equivalence extension is an extension,
but one in which the simple extension steps are grouped together in a way
that guarantees that the fresh ingredients
(nodes, edges, properties, equations) introduced in the extension are all
already known to exist uniquely.
The most intricate parts are for the edges.
In an ordinary extension, an unconstrained fresh edge can subsequently be
specified uniquely up to equality by commutativities (equations).
In an equivalence extension when we introduce an edge we must also document the
justification for its existence
(as a composite or a fillin;
universal structure edges such as limit projections are introduced along with the universal objects).
In addition, we must also include steps for proving equations between edges --
this is to provide images for commutativities under a sketch morphism.
These steps essentially codify the rules for congruences in universal algebra.
(The reason this is not needed for nodes is that essentially algebraic theories
of categories do not normally have any axioms to imply equations between objects.)

The game now is to describe simple equivalence extensions sufficient to
generate all the operators of the the theory of AUs
and all the arrow equalities generated by the axioms.
(For object equalities see Section~\ref{sec:ObjEq}.)

\begin{definition}\label{def:equivExt}
  A \emph{simple equivalence extension} is a proto-extension of one of
  the following forms (or \emph{rules}).
  Note that each is in fact an extension.

  In each case, every node or edge introduced will, in any strict model,
  become equal to a certain AU expression in terms of the data.
  For nodes, which are all introduced by simple extensions of universal kind,
  this has already been covered in Definition~\ref{def:extn}.
  For edges the expressions are given in $\delta\ske$.
  Those expressions do indeed satisfy the commutativities listed in $\delta\sktri$.
  On the other hand, any edges satisfying them will be equal to the expressions by the AU equations
  for uniqueness of fillins.

  First, there are various rules associated with morphisms and their composition.
  They are summarized in this table.

  \begin{tabular}{|c|c|l|}
    \hline
    Data & Delta & \\
    \hline
    $\xymatrix@1{ {} \ar@{->}[r]_{u} & {} \ar@{->}[r]_{v} & {} } $
    & $ \xymatrix@1{ {} \ar@{->}[r]_{u} \ar@{.>}@/^1pc/[rr]_{\bullet}^{v\qeqcomp u}
      & {} \ar@{->}[r]_{v}
      & {} } $
    & composition
    \\ \hline
    $ \xymatrix@1{ {X} \ar@{->}[r]_{u} & {Y} } $
    & $ \xymatrix@1{ {X} \ar@{->}[r]_{\sknid(X)} \ar@{->}@/^1pc/[rr]^{u}_{\bullet}
      & {X} \ar@{->}[r]_{u}
      & {Y} } $
    & left unit law
    \\ \hline
    $ \xymatrix@1{ {X} \ar@{->}[r]_{u} & {Y} } $
    & $ \xymatrix@1{{X} \ar@{->}[r]_{u} \ar@{->}@/^1pc/[rr]^{u}_{\bullet}
      & {Y} \ar@{->}[r]_{\sknid(Y)}
      & {Y} } $
    & right unit law
    \\ \hline
    $ \xymatrix@1{ {} \ar@{->}[r] \ar@{->}@/_1pc/[rr]^{\bullet} \ar@{->}@/^1.5pc/[rrr]_{\bullet}
      & {} \ar@{->}[r] \ar@{->}@/^1pc/[rr]_{\bullet}
      & {} \ar@{->}[r]
      & {}  } $
    & $ \xymatrix@1{ {} \ar@{->}@/_0.5pc/[rr] \ar@{->}@/^1pc/[rrr]_{\bullet}
      & {}
      & {} \ar@{->}[r]
      & {}  } $
    & left associativity
    \\ \hline
    $ \xymatrix@1{ {} \ar@{->}[r] \ar@{->}@/_1pc/[rr]^{\bullet} \ar@{->}@/_1.5pc/[rrr]^{\bullet}
      & {} \ar@{->}[r] \ar@{->}@/^1pc/[rr]_{\bullet}
      & {} \ar@{->}[r]
      & {}  } $
    & $ \xymatrix@1{ {} \ar@{->}[r] \ar@{->}@/_1pc/[rrr]^{\bullet}
      & {} \ar@{->}@/^0.5pc/[rr]
      & {}
      & {}  } $
    & right associativity
    \\ \hline
  \end{tabular}

  Second, for each kind of universal (terminal, pullback, initial, pushout, list),
  we have three rules.
  The first will be the simple extension that introduces the corresponding node,
  the second will introduce fillins by adjoining a primitive edge with the appropriate equations,
  and the third will introduce equations for the uniqueness of fillins.

  We illustrate this for pullbacks and for list objects.
  The rules for terminals, initials and pushouts follow the same principles as for pullbacks.

  For pullbacks:
  \begin{itemize}
  \item
    A simple extension for a pullback universal is also an equivalence extension.
  \item
    Suppose we have a pullback universal $\omega\in\skupb$,
    and another cone given as $\Delta_1,\Delta_2\in\sktri$,
    with
    \[\begin{split}
      \sktrir(\Delta_i) & = \sktrir(\skupbtri_i(\omega)) = u_i \\
      \sktric(\Delta_1) & = \sktric(\Delta_2) = v
      \text{.}
    \end{split}\]
    \[
      \omega \text{ is }
      \xymatrix{
        {\mathsf{P}}
          \ar@{->}[d]_{\mathsf{p}^1}
          \ar@{->}[dr]^{\mathsf{p}}
          \ar@{->}[r]^{\mathsf{p}^2}_{\bullet}
        & {}
          \ar@{->}[d]^{u_2}
        \\
        {}
          \ar@{->}[r]_{u_1}^{\bullet}
        & {}
      }
      \quad
      \Delta_1,\Delta_2 \text{ are }
      \xymatrix{
        {}
          \ar@{->}[d]_{v_1}
          \ar@{->}[dr]^{v}
          \ar@{->}[r]^{v_2}_{\bullet}
        & {}
          \ar@{->}[d]^{u_2}
        \\
        {}
          \ar@{->}[r]_{u_1}^{\bullet}
        & {}
      }
    \]
    Then our equivalence extension has
    \[\begin{split}
      \delta \ske & = \{ w = \qeqpbfill{v_1}{v_2}{u_1}{u_2} \} \\
      \delta \sktri & = \{ w\mathsf{p}^1 \skdiag v_1, w\mathsf{p}^2 \skdiag v_2 \}
      \text{.}
    \end{split}\]
  \item
    Suppose we have a pullback universal $\omega\in\skupb$ as above,
    and edges $v_1, v_2, w,w'$ with commutativities
    $w\mathsf{p}^1 \skdiag v_1, w\mathsf{p}^2 \skdiag v_2,
       w'\mathsf{p}^1 \skdiag v_1, w'\mathsf{p}^2 \skdiag v_2$.
    Then our equivalence extension has
    \[
      \delta \sktri = \{ w \skdiag w' \}
      \text{.}
    \]
  \end{itemize}

  For list objects:
  \begin{itemize}
  \item
    A simple extension for a list universal is also an equivalence extension.
  \item
    Suppose we have a list universal $\omega\in\skul$ with
    $\xymatrix@1{
      {1}
        \ar@{->}@<0.5ex>[r]^{\qeqle}
      & {L}
      & {A\times L}
        \ar@{->}[l]_{\qeqlcons}
    }$.
    Suppose (see diagram~\eqref{eq:listunichar}) we also have nodes $B,Y$,
    pullback universals to specify nodes for
    $L\times B$, $(A\times L)\times B$, $A\times(L\times B)$ and $A\times Y$,
    edges $\xymatrix@1{{B} \ar@{->}[r]^{y} & {Y} & {A\times Y} \ar@{->}[l]_{g}}$,
    and edges for $\qeqprodfill{\qeqtfill_B\qeqle}{B}$, $\qeqlcons\times B$
    and the associativity isomorphism,
    together with auxiliary edges and commutativities needed to characterize them.

    Using the notation of the following diagrams,
    our equivalence extension has
    $\delta\ske = \{ r, r', r'', g', g'' \}$,
    where $r = \qeqlrec{A}{y}{g}$,
    and $\delta\sktri$ comprises the seven commutativities shown.
    The second diagram is what is needed to specify that $r' = A \times r$.
    \begin{equation}\label{eq:eqExtLfill}
      \xymatrix@C=1cm{
        & {L \times B}
          \ar@{.>}[dd]_{r}^(0.8){\bullet}
        & {(A \times L)\times B}
          \ar@{->}[l]_-{\qeqlcons \times B}
          \ar@{.>}[ddl]_{g''}^(0.7){\bullet}
          \ar@{->}[d]^{\cong}
        \\
        & & {A\times(L \times B)}
          \ar@{.>}[dl]_{g'}^(0.7){\bullet}
          \ar@{.>}[d]^{r'=A\times r}
        \\
        {B}
          \ar@{->}[uur]^(0.8){\qeqprodfill{\qeqtfill_{B}\qeqle}{B}}
          \ar@{->}[r]_{y}^(0.8){\bullet}
        & {Y}
        & {A \times Y}
          \ar@{->}[l]^{g}
      }
      \xymatrix@C=0.7cm{
        & {A\times(L \times B)}
          \ar@{->}[r]^-{\mathsf{p}^2}
          \ar@{.>}[dr]^{r''}
          \ar@{.>}[d]^{r'}
          \ar@{->}[dl]_{\mathsf{p}^1}
        & {L \times B}
          \ar@{.>}[d]^{r}_{\bullet}
        \\
        {A}
        & {A \times Y}
          \ar@{->}[l]^{\mathsf{p}^1}_{\bullet}
          \ar@{->}[r]_{\mathsf{p}^2}^{\bullet}
        & {Y}
      }
    \end{equation}
  \item
    Suppose, given the configuration for the above fillin,
    we have two solutions with fillins $r_1, r_2$.
    Then our equivalence extension has
    \[
      \delta\sktri = \{ r_1 \skdiag r_2 \}
      \text{.}
    \]
    (Equivalence of the other edges can then be deduced.)
  \end{itemize}

  Finally, we have rules for balance, stability and exactness.
  In each case, the given configuration contains a particular edge $u\colon X \to Y$
  for which the equivalence extension adjoins an inverse. Hence
  \[
    \delta\ske = \{ u^{-1} \} \text{,} \quad
    \delta\sktri = \{ u u^{-1} \skdiag \sknid(X), u^{-1}u \skdiag \sknid(Y) \} \text{.}
  \]
  \begin{itemize}
  \item
    Rule for balance.
    Suppose we are given pullback and pushout universals
    $\omega\in\skupb, \omega'\in\skupo$,
    expressing the kernel pair and cokernel pair for the same edge $u\colon X \to Y$.
    \[
      \omega \text{ has }
      \xymatrix{
        {\qeqpb{u}{u}}
          \ar@{->}[r]^{\qeqproj{u}{u}^2}
          \ar@{->}[d]_{\qeqproj{u}{u}^1}
        & {X}
          \ar@{->}[d]^{u}
        \\
        {X}
          \ar@{->}[r]_{u}
        & {Y}
      }
      \text{,}\quad
      \omega' \text{ has }
      \xymatrix{
        {X}
          \ar@{->}[r]^{u}
          \ar@{->}[d]_{u}
        & {Y}
          \ar@{->}[d]^{\qeqinj{u}{u}^2}
        \\
        {Y}
          \ar@{->}[r]_{\qeqinj{u}{u}^1}
        & {\qeqpo{u}{u}}
      }
      \text{.}
    \]
    Suppose we also have commutativities $\qeqproj{u}{u}^1\skdiag \qeqproj{u}{u}^2$ ($u$ is monic)
    and $\qeqinj{u}{u}^1 \skdiag \qeqinj{u}{u}^2$ ($u$ is epi).
    Then our equivalence extension has $\delta\ske = \{ u^{-1} = \qequc(u) \}$.
  \item
    Rule for stability of initial objects.
    Suppose we are given a universal $\omega\in\skui$ for an initial object $0$,
    and an edge $u\colon X \to 0$.
    Then $\delta\ske = \{ u^{-1} = \qeqifill_X \}$.
  \item
    Rule for stability of pushouts.
    Suppose we have data as outlined in diagram~\eqref{eq:poStab}.
    This will include two pushout universals (bottom square and inner square on top),
    three pullback universals for vertical squares
    (front and right faces, and also one stretching diagonally over $v$),
    the extra edge $e$, and other diagonal edges where necessary.
    Then the equivalence extension inverts $e$,
    $\delta\ske = \{ e^{-1} = \qeqpostab{u_1}{u_2}{w} \}$.
  \item
    Rule for exactness.
    Suppose we have data as outlined in diagram~\eqref{eq:exactness}.
    This will include pullback universals to specify that $X_0 \times X_0$,
    $X_2$ and $K$ are the appropriate limits,
    pushout universals to specify that $\gamma$ is a coequalizer,
    and commutativities to specify that $\pi$ and $e$ are fillins.
    Then the equivalence extension inverts $e$,
    $\delta\ske = \{ e^{-1} = \qeqex(\pi_1,\pi_2,r,s,t) \}$.
  \end{itemize}

  An \emph{equivalence extension}, written $\thT\skeqext\thT'$,
  is a proto-extension that can be expressed as a composite of finitely
  many simple equivalence extensions.
\end{definition}

Note also that if $\thT\skeqext\thT'$ is an equivalence extension,
then so too is its reindexing along any sketch homomorphism.

If $e_i \colon \thT \skeqext \thT_i$ ($i=1,2$) are two equivalence extensions of a context $\thT$,
then $e_2$ is a \emph{refinement} of $e_1$, by $\varepsilon$,
if $\varepsilon\colon\thT_1\to\thT_2$ is a homomorphism such that $e_1\varepsilon = e_2$.

For any two equivalence extensions $e_i$ of $\thT$,
we can reindex $e_2$ along $e_1$ (or vice versa),
compose, and thereby get a common refinement of $e_1$ and $e_2$.

Equality between morphisms $u,u'\colon X \to Y$
is expressed using unary commutativities $u\skdiag u'$,
defined as $\sknid(X)u\skdiag u'$.
Since the rules used in equivalence extensions must be capable of supplying proofs of equality,
we verify that the standard rules for equality can be derived as composite rules of equivalence extensions.
Given these, it will be clear that all proofs of equality of morphisms in the essentially algebraic
theory of categories can be represented by commutativities in a suitable equivalence extension.

\begin{proposition}\label{prop:eqExtEqLogic}
  Let $\thT$ be a sketch.
  In the following results we are interested in properties holding in $\thT$,
  and properties derivable from them in the sense that they hold in some equivalence
  extension of $\thT$.
  \begin{enumerate}
  \item
    For any two nodes $X$ and $Y$,
    $\skdiag_{XY}$ is an equivalence relation on the edges between them.
    This is in the sense that for each of the three properties for an equivalence relation,
    if the hypothesis holds in some sketch then the conclusion holds in some equivalence extension.
  \item
    If $u,u'$ are two edges from $X$ to $Y$,
    then the commutativities $\sknid(X)u\skdiag u'$ and $u'\sknid(Y) \skdiag u$
    are mutually derivable.

    It follows that we have four mutually derivable characterizations of $u\skdiag u'$,
    namely $\sknid(X) u \skdiag u'$, $\sknid(X) u' \skdiag u$,
    $u \sknid(Y) \skdiag u'$ and $u' \sknid(Y) \skdiag u$.
  \item
    Suppose we have $u\skdiag_{XY}u'$ and $v\skdiag_{YZ} v'$,
    and also $w,w'\colon X \to Z$ with $uv\skdiag w$.
    Then the commutativities $u'v'\skdiag w'$ and $w\skdiag w'$
    are mutually derivable.

    From left to right is congruence.
    From right to left (with $u'=u$ and $v'=v$) shows that the set of
    composites $uv$ is the entire congruence class of $w$.
  \end{enumerate}
\end{proposition}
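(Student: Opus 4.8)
The plan is to show that every claim follows using only the ``morphism'' rules from the start of Definition~\ref{def:equivExt} --- composition, the two unit laws, and the two associativity rules --- since for an arbitrary sketch $\thT$ with no presupposed universal structure these are the only simple equivalence extensions whose data is always present. It is convenient to record the combined effect of the two associativity rules: given commutativities $uv\skdiag k$ and $vw\skdiag l$ along a composable triple $u,v,w$, the commutativities $kw\skdiag m$ and $ul\skdiag m$ are derivable from one another. Throughout, a ``derivation'' means a finite composite of simple equivalence extensions applied to data already present; since equivalence extensions compose, chaining derivations is legitimate.

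\textbf{Parts 1 and 2.} Reflexivity $u\skdiag u$ is immediate: it is the commutativity $\sknid(X)\,u\skdiag u$ produced by the left unit law applied to $u$. The remaining identities all follow by one uniform move --- instantiate an associativity rule along a triple two of whose three legs are identities, supplying the ``trivial'' hypotheses from unit laws and the substantive one from the given commutativity, then read off the conclusion. Concretely: (i) taking the triple $\sknid(X),u,\sknid(Y)$, with hypothesis $\sknid(X)\,u\skdiag u'$ together with the unit-law commutativities $\sknid(X)\,u\skdiag u$ and $u\,\sknid(Y)\skdiag u$, one adjoins $u'\,\sknid(Y)\skdiag u$, and the mirror instantiation (interchanging the roles of the two unit laws) runs this backwards, giving the mutual derivability asserted in part~2; (ii) taking the triple $\sknid(X),\sknid(X),u$ with $\sknid(X)\,u\skdiag u'$ in the role of the middle composite and $\sknid(X)\,\sknid(X)\skdiag\sknid(X)$, $\sknid(X)\,u\skdiag u$ as the trivial hypotheses, one adjoins $\sknid(X)\,u'\skdiag u$, i.e.\ symmetry; (iii) taking that triple again and feeding in both $\sknid(X)\,u\skdiag u'$ and $\sknid(X)\,u'\skdiag u''$, one adjoins $\sknid(X)\,u\skdiag u''$, i.e.\ transitivity. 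The four equivalent characterizations of $u\skdiag u'$ then follow by combining part~2, applied to the pairs $(u,u')$ and $(u',u)$, with symmetry.

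\textbf{Part 3.} I first record three auxiliary derivations, each a single associativity instantiation with a unit-law hypothesis: (a) from $ab\skdiag c$ and $ab\skdiag c'$ one derives $c\skdiag c'$ (triple $\sknid(X),a,b$); (b) from $u\skdiag u'$ and $uv\skdiag w$ one derives $u'v\skdiag w$ (triple $\sknid(X),u,v$); (c) from $v\skdiag v'$ and $u'v\skdiag w$ one derives $u'v'\skdiag w$ (triple $u',v,\sknid(Z)$, after first rewriting $v\skdiag v'$ into the shape $v\,\sknid(Z)\skdiag v'$ using symmetry and part~2). Composing (b) and (c): from $u\skdiag u'$, $v\skdiag v'$ and $uv\skdiag w$ one derives $u'v'\skdiag w$. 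The left-to-right direction of part~3 now follows by applying (a) to this derived commutativity together with the hypothesis $u'v'\skdiag w'$, yielding $w\skdiag w'$. For the right-to-left direction, combine the derived $u'v'\skdiag w$ with the hypothesis $w\skdiag w'$ by one further associativity step on the triple $\sknid(X),u',v'$ (congruence in the hypotenuse) to obtain $u'v'\skdiag w'$.

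\textbf{The main obstacle.} The content is elementary; what takes care is the bookkeeping. Each associativity rule consumes \emph{three} pre-existing commutativities of exactly matched shapes and only ever adjoins a commutativity whose legs and hypotenuse are already witnessed, so every nontrivial equation must be bootstrapped from unit-law commutativities, and at each step one must choose which of the available commutativities fills each role so that the rule delivers the target equation rather than a vacuous restatement of a hypothesis. Getting these instantiations right --- and ordering the lemmas so that part~2 and symmetry are in hand before they are invoked inside auxiliary~(c) --- is the real work.
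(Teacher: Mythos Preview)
Your proof is correct and follows essentially the same strategy as the paper: everything is obtained from the unit laws and the two associativity rules, with each nontrivial step being an associativity instantiation in which the ``slack'' hypotheses are supplied by unit-law commutativities. The only difference is organizational: for part~(3) the paper splits into the cases $v=v'$ and $u=u'$ (each handled by a single associativity diagram, the second obtained by dualizing via part~(2)) and then combines them, whereas you isolate the three auxiliaries (a), (b), (c); your (b) and (c) are exactly the paper's two cases specialized to $w'=w$, and your extra steps with (a) and the final ``hypotenuse congruence'' recover the full biconditional. One small imprecision: your ``mirror instantiation'' for the reverse direction of part~(2) is not literally the same triple with the unit laws swapped --- you need the triple $\sknid(X),u',\sknid(Y)$ rather than $\sknid(X),u,\sknid(Y)$ --- but that is what the paper does and is surely what you intend.
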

\begin{proof}
  (1)
  \emph{Reflexivity} is immediate from the left unit law.

  For \emph{symmetry,} suppose $u\skdiag_{XY} u'$.
  By the left unit law we derive
  \[ \xymatrix@1{
    {X}
      \ar@{->}[r]^{\sknid(X)}_{\bullet}
      \ar@{->}@/_1pc/[rr]_(0.8){\sknid(X)}
      \ar@{->}@/_1.5pc/[rrr]_(0.8){u}^(0.8){\bullet}
    & {X}
      \ar@{->}[r]^{\sknid(X)}
      \ar@{->}@/^1.5pc/[rr]^(0.8){u'}_{\bullet}
    & {X}
      \ar@{->}[r]^{u}
    & {Y}
  } \text{,}\]
  and then right associativity gives $\sknid(X)u'\skdiag u$.

  For \emph{transitivity,}
  suppose $u\skdiag u' \skdiag u''$.
  By the left unit law we get
  \[ \xymatrix@1{
    {X}
      \ar@{->}[r]_{\sknid(X)}^{\bullet}
      \ar@{->}@/_1.5pc/[rr]_(0.8){\sknid(X)}^{\bullet}
      \ar@{->}@/^1.5pc/[rrr]^(0.2){u''}
    & {X}
      \ar@{->}[r]_{\sknid(X)}
      \ar@{->}@/^1pc/[rr]^(0.2){u'}_{\bullet}
    & {X}
      \ar@{->}[r]_{u}
    & {Y}
  } \text{,}\]
  and then $\sknid(X)u\skdiag u''$ is derived by left associativity.

  (2)
  The two directions follow by applying associative laws to the two diagrams
  \[
    \xymatrix@1{
      {X}
        \ar@{->}[r]_{\sknid(X)}^{\bullet}
        \ar@{->}@/_1.5pc/[rr]_(0.8){u'}^{\bullet}
        \ar@{->}@/^1.5pc/[rrr]^(0.2){u}
      & {X}
        \ar@{->}[r]_{u}
        \ar@{->}@/^1pc/[rr]^(0.2){u}_{\bullet}
      & {X}
        \ar@{->}[r]_{\sknid(Y)}
      & {Y}
    }
    \text{ and }
    \xymatrix@1{
      {X}
        \ar@{->}[r]^{\sknid(X)}_{\bullet}
        \ar@{->}@/_1pc/[rr]_(0.8){u'}
        \ar@{->}@/_1.5pc/[rrr]_(0.8){u'}^(0.8){\bullet}
      & {X}
        \ar@{->}[r]^{u'}
        \ar@{->}@/^1.5pc/[rr]^(0.8){u}_{\bullet}
      & {X}
        \ar@{->}[r]^{\sknid(Y)}
      & {Y}
    }
  \]

  (3)
  First, consider the case when $v=v'$, and the diagram
  \[
    \xymatrix@1{
      {X}
        \ar@{->}[r]^{\sknid(X)}
        \ar@{->}@/_1pc/[rr]_{u'}^{\bullet}
      & {X}
        \ar@{->}[r]_{u}
        \ar@{->}@/^1pc/[rr]^{w}_{\bullet}
      & {Y}
        \ar@{->}[r]_{v}
      & {Z}
    }
    \text{.}
  \]
  The two associativities give the two directions we want.
  A similar proof, but dual (using (2)), deals with the case $u=u'$.
  Putting these together gives the general result.
\end{proof}

\begin{proposition}\label{prop:eeModelExtn}
  Let $\thT\skeqext\thT'$ be an equivalence extension,
  and let $M$ be a strict model of $\thT$ in an AU $\catA$.

  Then there is a unique strict model $M'$ of $\thT'$ in $\catA$
  whose restriction to $\thT$ is $M$.

  We call this the \emph{extension} of $M$ to $\thT'$.
\end{proposition}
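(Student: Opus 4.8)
The plan is to induct on the number of simple equivalence extensions whose composite is $\thT\skeqext\thT'$, so that it suffices to treat a single simple equivalence extension $\thT\skeqext\thT'$ and then reassemble. Since a model of a sketch is pinned down by its node and edge carriers (Definition~\ref{def:model}) and every simple equivalence extension is a proto-extension, the entire task is to supply interpretations for the finitely many fresh elements of $\delta\skn$ and $\delta\ske$, subject to the freshly imposed commutativities $\delta\sktri$ and the relations carried by any fresh universal. I would first dispose of the rules of Definition~\ref{def:equivExt} that adjoin \emph{no} fresh node or edge, only a commutativity: the unit and associativity rules, the uniqueness-of-fillin rules for each kind of universal, and the symmetry/transitivity-flavoured rules. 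For these $M'=M$ as carrier data, and the only obligation is that $M$ already satisfies the new commutativity; for the unit and associativity rules this is just functoriality of $M$ as a diagram, and for the uniqueness-of-fillin rules it holds because strictness of $M$ makes the relevant universal subjects the canonical AU constructs, whereupon the uniqueness clauses of the quasiequational axioms (Definition~\ref{def:AUQuasieq}) force any two competing fillins to agree in $\catA$.

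Next I would handle the rules that introduce a fresh primitive node together with its universal structure: the terminal, initial, pullback, pushout and list-universal introductions (these are the simple extensions of Definition~\ref{def:extn}, expressly noted to be equivalence extensions). Strictness of $M'$ and the implied relations attached to the new universal --- e.g. $\skutn(\omega)=\qeqt$, the defining identities for the projections $\qeqproj{u_1}{u_2}^{\lambda}$, and for a list universal $\qeqle(A)=\skule(\omega)$ and $\qeqlcons(A)=\skulcons(\omega)$ --- force the fresh node and all its fresh structure edges to be interpreted by the chosen canonical constructs of $\catA$ applied to the (already interpreted, already strict) data. These are defined precisely because $\catA$ is an AU, and they visibly satisfy the commutativities listed in $\delta\sktri$; so this case yields both existence and uniqueness.

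The remaining rules are those adjoining a fresh primitive edge that is then constrained by commutativities: the composition rule, the fillin rules for pullbacks, pushouts and list universals, and the inverse-adjoining rules for balance, stability of the initial object, stability of pushouts (diagram~\eqref{eq:poStab}) and exactness (diagram~\eqref{eq:exactness}). The mechanism here is the one already noted in Definition~\ref{def:equivExt}: in any strict $M'$ extending $M$ the fresh edge must satisfy the commutativities of $\delta\sktri$, and the AU uniqueness axioms for composites, for pullback and pushout fillins, and for $\qequc$, $\qeqpostab$ and $\qeqex$ then force it to be interpreted by the prescribed AU expression in the data; conversely, interpreting it by that expression --- which is defined in $M$ exactly because $M$ is strict and the data configuration satisfies the equations recorded as the domain of definition of the operator in question --- does satisfy those commutativities. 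Hence $M'$ exists and is unique at each step; its $\thT$-reduct being the strict model $M$, together with the verification of every freshly imposed relation, shows $M'$ is strict as a model of $\thT'$, and running the induction proves the proposition.

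The main obstacle is not conceptual but is the bookkeeping for the list-object fillin rule of diagram~\eqref{eq:eqExtLfill}: one must check that the prescribed expressions for $r=\qeqlrec{A}{y}{g}$ and for the auxiliary edges $r'$, $r''$, $g'$, $g''$ are defined in $M$ and jointly satisfy all seven commutativities. This reduces to the existence-and-uniqueness clause of the parametrized list universal property~\eqref{eq:listunichar}, together with the fact that $M$, being strict, preserves on the nose the auxiliary products and the associativity isomorphism that appear in $\phi^{A}(y,g)$ and $\psi^{A}_{y,g}(r)$. I would write this case out in full and then remark that the stability-of-pushouts and exactness rules go exactly the same way through the $\qeqpostab$ and $\qeqex$ axioms, the remaining cases being strictly easier.
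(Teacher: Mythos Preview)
Your proposal is correct and follows essentially the same approach as the paper: induct on simple equivalence extensions, observe that fresh nodes and edges are forced by their canonical AU expressions (strictness for nodes, the accompanying commutativities for edges), and verify that every newly adjoined commutativity holds in $\catA$ either by the quasiequational AU axioms or, for the uniqueness-of-fillin rules, because both candidate edges equal the canonical fillin. Your case analysis is more explicit than the paper's terse version, but the structure and the key observations coincide; the only minor slip is your reference to ``symmetry/transitivity-flavoured rules'', which are not primitive in Definition~\ref{def:equivExt} but derived in Proposition~\ref{prop:eqExtEqLogic}.
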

\begin{proof}
  Each node or edge introduced in $\thT'$ has a canonical description
  as an AU-expression in terms of older nodes and edges,
  and so has a canonical interpretation already in $\AUpres{\thT}$.
  For a node, strictness implies already that we must use this interpretation.
  For an edge, the commutativities introduced at the same time are enough to force
  equality in $\catA$ between the interpretations of the edge and the canonical description.

  It remains to show that all the commutativities $fg\skdiag h$ in $\thT'$ are respected.
  Let us write $[f]$ for the interpretation of the canonical expression for $f$ in $M$,
  and similarly for $g$ and $h$.
  We require $[h] = [g]\qeqcomp[f]$ in $\catA$.

  We have to examine the rule that introduces the commutativity,
  and use induction on the number of simple equational extensions needed.

  For the rules that introduce nodes or edges,
  the commutativities introduced follow directly from quasiequational rules for AUs.
  It is also clear for unit laws and associativity.

  There remain the uniqueness rules for fillins.
  Suppose we have one expressing $f\skdiag f'$.
  Then in $\catA$ we have that $f$ and $f'$ are both equal to the fillin,
  and so equal to each other.
\end{proof}

\begin{proposition}\label{prop:eqExtAUiso}
  Let $e\colon\thT\skeqext\thT'$ be an equivalence extension.
  Then the corresponding AU-functor
  $\AUpres{e} \colon \AUpres{\thT}\to\AUpres{\thT'}$ is an isomorphism.
\end{proposition}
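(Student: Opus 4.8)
The plan is to deduce this from the universal property of classifying AUs together with Proposition~\ref{prop:eeModelExtn}, by a Yoneda argument in the category of AUs and strict AU-functors. Recall that for any AU $\catA$ the universal property gives a bijection between strict AU-functors $\AUpres{\thT}\to\catA$ and strict models of $\thT$ in $\catA$, sending $h$ to $\Mods{\thT}{h}(M_G)$ where $M_G$ is the generic model, and likewise for $\thT'$; moreover these bijections are natural in $\catA$ with respect to strict AU-functors, since model reduction along an AU-functor is functorial and preserves strictness.

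First I would identify what precomposition with $\AUpres{e}$ does under these bijections. By construction $\AUpres{e}$ is the strict AU-functor classifying the $e$-reduct $M'_G|e$ of the generic model $M'_G$ of $\thT'$, that is, $\Mods{\thT}{\AUpres{e}}(M_G) = M'_G|e$. Hence for a strict AU-functor $h\colon\AUpres{\thT'}\to\catA$ corresponding to a strict model $N$ of $\thT'$, the composite $h\circ\AUpres{e}$ corresponds to $\Mods{\thT}{h}(M'_G|e) = (\Mods{\thT'}{h}(M'_G))|e = N|e$, using that reduction commutes with applying an AU-functor. So, under the bijections, precomposition with $\AUpres{e}$ is exactly the reduct map $N\mapsto N|e$ (restriction along $e$) from strict models of $\thT'$ in $\catA$ to strict models of $\thT$ in $\catA$.

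Proposition~\ref{prop:eeModelExtn} says precisely that this reduct map is a bijection on objects -- which is all that is needed, since the objects of $\Mods{\thT}{\catA}$ are in bijection with the strict AU-functors $\AUpres{\thT}\to\catA$: it is surjective because every strict model of $\thT$ in $\catA$ extends to a strict model of $\thT'$, and injective because that extension is unique. This holds for every $\catA$, and, by functoriality of reducts, naturally in $\catA$. Hence precomposition with $\AUpres{e}$ is a natural isomorphism between the functors $\catA\mapsto\mathrm{Hom}(\AUpres{\thT'},\catA)$ and $\catA\mapsto\mathrm{Hom}(\AUpres{\thT},\catA)$, and by the Yoneda lemma $\AUpres{e}$ is an isomorphism. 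Concretely, its inverse is the strict AU-functor $\AUpres{\thT'}\to\AUpres{\thT}$ classifying the unique extension to $\thT'$ of the generic model of $\thT$ (which exists by Proposition~\ref{prop:eeModelExtn}); one checks directly that the two composites correspond to the respective generic models, and are therefore the identities.

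The one point needing care is the identification in the second step -- that precomposition with $\AUpres{e}$ really is the reduct map -- which amounts to unwinding the definition of $\AUpres{e}$ as a classifying morphism and the naturality of the universal property of $\AUpres{\thT}$; there is no genuine obstacle, since all the substance of the statement has been packaged into Proposition~\ref{prop:eeModelExtn}.
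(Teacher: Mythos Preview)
Your proof is correct and follows essentially the same approach as the paper: both translate Proposition~\ref{prop:eeModelExtn} into the statement that precomposition with $\AUpres{e}$ is a bijection on strict AU-functors for every target AU, and then conclude via Yoneda. The paper's version is just the terse form of yours---it constructs the inverse directly by taking $M=\Id_{\AUpres{\thT}}$ (exactly your ``classifying the unique extension of the generic model''), rather than invoking Yoneda by name.
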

\begin{proof}
  In terms of strict AU-functors, Proposition~\ref{prop:eeModelExtn} says that
  for any strict $M\colon\AUpres{\thT}\to\catA$ there is a unique strict
  $M'\colon\AUpres{\thT'}\to\catA$ such that $\AUpres{e}M' = M$.
  Applying this with $\Id_{\AUpres{\thT}}$ for $M$ gives us an
  $F\colon \AUpres{\thT'}\to\AUpres{\thT}$ for $M'$,
  and then for more general $M$ we see that $M' = FM$.
  From this we deduce that $F$ is an inverse for $\AUpres{e}$.
\end{proof}

\section{Object equalities}\label{sec:ObjEq}
The notion of equality between two context homomorphisms (Definition~\ref{def:AUSk}) is very strong,
and in essence syntactic.
The homomorphisms must act equally on the nodes and edges as sketch ingredients.
In practice we usually want a more semantic notion that allows us to say when nodes and edges are equal
in the sense that they must be interpreted equally in strict models.
This will allow us to get faithfulness for a functor that takes $\thT$ to $\AUpres{\thT}$.

For edges, we already have a machinery for proving equality as morphisms by using commutativities.
For nodes we have deliberately avoided anything analogous,
beyond equality in the graph.
However, semantic equality can still arise when two nodes are declared by universals
for two identical constructions from equal data.
We define certain kinds of edges as being ``object equalities'' between their domains and codomains;
semantically they must be equal to identity morphisms.
We then extend the phrase to apply also to ``object equality'' between  edges or context homomorphisms.

We use the phrase \emph{object equality} for a situation
where a context already has the required structure,
and \emph{objectively equal}, or \emph{objective equality},
for a situation where an equivalence extension can provide it.

\begin{definition}\label{def:objeq}
  Let $\thT$ be a context,
  and suppose $\gamma\colon X \to Y$ is an edge in $\thT$.
  Then $\gamma$ is an \emph{object equality},
  written $\gamma\colon X \Rightarrow Y$, if
  either $X=Y$ as nodes and $\gamma\skdiag\sknid(X)$ in $\thT$,
  or $\gamma$ can be provided with structure in $\thT$ in one of the following ways.
  \begin{enumerate}
  \item
    If $X,Y$ are subjects of terminal universals: no extra structure needed.
  \item
    Suppose $X$ and $Y$ are subjects of pullback universals,
    for the back and front faces of the following diagram,
    and suppose also we have we have object equalities
    $\gamma_i\colon U_i \Rightarrow V_i$
    and edges and commutativities to name such composites as are required
    and to assert $u_i\gamma_3 \skdiag \gamma_i v_i$ and their consequence
    $p_1\gamma_1 v_1 \skdiag p_2 \gamma_2 v_2$,
    and $\gamma q_i \skdiag p_i \gamma_i$
    (characterizing $\gamma$ as a fillin
    $\qeqpbfill{p_1\gamma_1}{p_2\gamma_2}{v_1}{v_2}$).

    Then $\gamma \colon \colon X \Rightarrow Y$ is an object equality.
    \[
      \xymatrix{
        {X}
          \ar@{->}[rr]^{p_1}
          \ar@{->}[dd]_{p_2}
          \ar@{.>}[dr]^{\gamma}
        & & {U_1}
          \ar@{->}[dd]^(0.7){u_1}
          \ar@{=>}[dr]^{\gamma_1}
        \\
        & {Y}
          \ar@{->}[rr]^(0.3){q_1}
          \ar@{->}[dd]_(0.3){q_2}
        & & {V_1}
          \ar@{->}[dd]^{v_1}
        \\
        {U_2}
          \ar@{->}[rr]_(0.7){u_2}
          \ar@{=>}[dr]_{\gamma_2}
        & & {U_3}
          \ar@{=>}[dr]^{\gamma_3}
        \\
        & {V_2}
          \ar@{->}[rr]_{v_2}
        & & {V_3}
      }
      \text{.}
    \]
  \item
    Similarly for initial objects and pushouts.
  \item
    Suppose we have two list universals for $L_i = \qeql(A_i)$ ($i = 1,2$)
    and an object equality $\gamma_A\colon A_1\Rightarrow A_2$,
    and an edge $\gamma_L\colon L_1\to L_2$ with sufficient data to characterize it as
    $\qeql(\gamma_A)$ (Remark~\ref{rem:listf}).
    Then $\gamma_L$ is an object equality.
  \end{enumerate}
\end{definition}

\begin{lemma}\label{lem:objeq}
  Let $\thT$ be a context.
  \begin{enumerate}
  \item
    If $\gamma\colon X\Rightarrow Y$ is an object equality, then in $\AUpres{\thT}$
    we have $X=Y$ and $\gamma$ is the identity morphism.
  \item
    If $\gamma \colon X \Rightarrow X$ is an object equality,
    then there is some equivalence extension $\thT\skeqext\thT'$ in which $\sknid(X)\skdiag\gamma$.
  \item
    If $\gamma \colon X \Rightarrow Y$ and $\gamma' \colon Y \Rightarrow Z$ are object equalities,
    then there is some $\thT\skeqext\thT'$ in which we have an object equality
    $\delta\colon X\Rightarrow Z$ and $\gamma\gamma'\skdiag\delta$.
  \item
    If $\gamma \colon X \Rightarrow Y$ is an object equality,
    then there is some $\thT\skeqext\thT'$ in which $\gamma$ is an isomorphism,
    and its inverse is also an object equality.
  \item
    If $\gamma,\gamma'\colon X\Rightarrow Y$ are two object equalities,
    then there is some $\thT\skeqext\thT'$ in which $\gamma\skdiag\gamma'$.
  \end{enumerate}
\end{lemma}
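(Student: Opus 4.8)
The plan is to prove (1) on its own by structural induction on the object equality (Definition~\ref{def:objeq}), then to prove (2), (3) and (4) by a single simultaneous structural induction, and finally to deduce (5) formally. Throughout I would use that equivalence extensions compose and reindex (so finitely many of them over $\thT$ amalgamate into one common refinement), Proposition~\ref{prop:eqExtEqLogic} for equational reasoning with $\skdiag$ inside an equivalence extension, and the fact that --- since every universal introduces its subject node \emph{fresh} --- each node of a context is the subject of at most one universal; this last point is what makes the case analyses below match up.

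For (1): in the base case the commutativity $\gamma\skdiag\sknid(X)$ is just the relation $\gamma=\id_X$ in $\AUpres{\thT}$. If $X,Y$ are subjects of terminal universals, both equal the canonical $\qeqt$, so $X=Y$, and $\gamma$ is the identity by uniqueness of maps to the terminal. In the pullback case, the inductive hypothesis on $\gamma_1,\gamma_2,\gamma_3$ gives $U_i=V_i$ and $\gamma_i=\id$; feeding this into $u_i\gamma_3\skdiag\gamma_iv_i$ forces $u_i=v_i$, hence the canonical pullbacks and their projections coincide, $X=Y$, and $\gamma q_i\skdiag p_i\gamma_i$ becomes $p_i\circ\gamma=p_i$, so $\gamma=\id_X$ by uniqueness of pullback fillins; initials and pushouts are dual. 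In the list case the inductive hypothesis gives $A_1=A_2$, $\gamma_A=\id$, and $\qeql$ of an identity is an identity (uniqueness in Remark~\ref{rem:listf}), so $L_1=L_2$ and $\gamma_L=\id$.

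For (2)--(4) I would induct simultaneously on the structure of the object equality (for (3), on the pair $(\gamma,\gamma')$), first disposing of the degenerate cases --- a base-form object equality, and for (3) a base-form factor --- which collapse the claim to an easier one; afterwards a single universal form (dictated by the node $X$, resp.\ the shared node $Y$) governs the remaining data, and all the work reduces to moves licensed by Definition~\ref{def:equivExt}. For (4) one builds a reverse object equality $\gamma^{-1}$: in the terminal case take the fillin $Y\to X$ into the terminal $X$ (automatically an object equality) and use terminal uniqueness to get $\gamma;\gamma^{-1}\skdiag\sknid(X)$ and $\gamma^{-1};\gamma\skdiag\sknid(Y)$; in the pullback case invert the legs by the inductive hypothesis, transport the side commutativities across those inverses, take $\gamma^{-1}$ to be the resulting pullback fillin $Y\Rightarrow X$, and check the two round trips are identities by computing their projections and invoking uniqueness of pullback fillins; in the list case put $\gamma_L^{-1}=\qeql(\gamma_A^{-1})$ and use functoriality of $\qeql$. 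For (3), in the matching-universal case, compose the legs by the inductive hypothesis to get legs $\delta_i$, assemble them into an object equality $\delta\colon X\Rightarrow Z$ (the side commutativities for $\delta$ follow from those of $\gamma$ and $\gamma'$), and identify $\gamma;\gamma'$ with $\delta$ by uniqueness of fillins (list case using $\qeql(f);\qeql(g)\skdiag\qeql(f;g)$). For (2), the two universals for $X$ coincide, its legs are object equalities of the form $X_i\Rightarrow X_i$ that the inductive hypothesis for (2) makes $\skdiag$ identities, and then $\gamma\skdiag\sknid(X)$ is forced by the appropriate uniqueness rule (terminal uniqueness, uniqueness of pullback fillins, or uniqueness of recursors).

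Then (5) is formal: pass to an equivalence extension inverting $\gamma$ with object-equality inverse $\gamma^{-1}$ (by (4)); by (3), $\gamma';\gamma^{-1}$ is $\skdiag$ an object equality $X\Rightarrow X$, hence $\skdiag\sknid(X)$ by (2); composing on the right with $\gamma$ and using $\gamma^{-1};\gamma\skdiag\sknid(Y)$ yields $\gamma'\skdiag\gamma$. I expect the main obstacle to be bookkeeping rather than insight: verifying in every case that each edge, commutativity and inverse called for is exactly one of the moves permitted by the simple equivalence extension rules of Definition~\ref{def:equivExt}, and in particular extracting from uniqueness of recursors the functoriality facts $\qeql(\sknid(A))\skdiag\sknid(\qeql(A))$ and $\qeql(f);\qeql(g)\skdiag\qeql(f;g)$, which the list case needs in all of (2)--(4); a secondary care point is handling the degenerate overlaps up front (a node that both carries a base-form object equality to itself and is the subject of a universal, or a base-form factor in a composite) so that the induction proper only ever has to match two data sets of the same universal form.
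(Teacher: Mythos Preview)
Your proposal is correct and rests on the same underlying idea as the paper's proof: structural induction on the inductive definition of object equality, with the uniqueness-of-fillin rules from Definition~\ref{def:equivExt} doing the real work at each inductive step. The paper's proof is very terse---essentially just pointing at those uniqueness clauses---so your version is a detailed unpacking of the same argument.

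There is, however, a genuine difference in decomposition. For (4), the paper simply observes that the clauses of Definition~\ref{def:objeq} are symmetric in $X$ and $Y$ (so a reverse object equality exists in some equivalence extension without further case analysis), and then appeals to the already-proved (3) and (2) to show the two round trips are $\skdiag$ to identities. You instead build the inverse and verify the round trips case by case via direct projection/recursor computations. Conversely, for (5) the paper argues directly from uniqueness of fillins (implicitly by induction on the legs), whereas you route through (2), (3), (4) via $\gamma';\gamma^{-1}$. Both orderings work; the paper's saves you from redoing uniqueness arguments in (4), and your route to (5) saves you from redoing the induction there. Your care point about establishing $\qeql(\sknid(A))\skdiag\sknid(\qeql(A))$ and $\qeql(f);\qeql(g)\skdiag\qeql(f;g)$ from the recursor-uniqueness rule is exactly the content the paper suppresses when it says ``uniqueness clauses for fillins'', so you have identified the only nontrivial step.
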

\begin{proof}
  (1) is immediate from the definition, bearing in mind that for a list universal
  the expression for $A\times L$ is defined to be that for the pullback of $!_L$ and $!_A$.

  (2) and (3) follow from the uniqueness clauses for fillins.

  (4) follows because all the cases for object equality are symmetric,
  and we can then apply (3) and (2).

  (5) again follows from the uniqueness clauses for fillins.
\end{proof}

We shall use the phrase ``object equality'' more generally than just for objects.

\begin{definition}\label{def:objEqGeneral}
  Let $\thT$ be a context.

  If $u_i\colon X_i \to  Y_i$ ($i=1,2$) are edges in $\thT$,
  then an \emph{object equality} from $u_1$ to $u_2$
  is the data of a commutative diagram
  \[
    \xymatrix{
      {X_1}
        \ar@{->}[r]^{u_1}_{\bullet}
        \ar@{->}[dr]^{\gamma_u}
        \ar@{=>}[d]_{\gamma_X}
      & {Y_1}
        \ar@{=>}[d]^{\gamma_Y}
      \\
      {X_2}
        \ar@{->}[r]_{u_2}^{\bullet}
      & {Y_2}
    }
  \]
  such that $\gamma_X$ and $\gamma_Y$ are object equalities.

  Let $f_0,f_1\colon\thT_1\skhom\thT_0$ be two context homomorphisms.
  Then an \emph{object equality} from $f_0$ to $f_1$ is a 2-cell $\gamma$ from
  $f_0$ to $f_1$, for which every carrier edge is an object equality
  $\gamma_X \colon f_0(X) \Rightarrow f_1(X)$.
  (It follows that for each edge $u\colon X \to Y$ of $\thT_1$,
  we get an object equality $(\gamma_X,\gamma_u,\gamma_Y)$ from $f_0(u)$ to $f_1(u)$.)

  By taking $\thT_1$ as either $\thob$ or $\thob^{\to}$,
  we see that object equality for homomorphisms subsumes the cases
  for nodes and edges.

  We say that two context homomorphisms are \emph{objectively equal} in $\thT$,
  symbolized $=_o$,
  if there is some equivalence extension of $\thT$ in which they have an object equality.
\end{definition}

\begin{proposition}\label{prop:objEqER}
  Objective equality of context homomorphisms is an equivalence relation.
\end{proposition}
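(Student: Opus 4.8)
The plan is to verify the three defining properties of an equivalence relation — reflexivity, symmetry, transitivity — for the relation $=_o$ on context homomorphisms $\thT_1 \skhom \thT_0$, using Lemma~\ref{lem:objeq} as the engine and the closure of equivalence extensions under composition and reindexing as the glue. Throughout, recall that ``$f =_o g$'' means there is \emph{some} equivalence extension $\thT_0 \skeqext \thT_0'$ in which $f$ and $g$ admit an object equality in the sense of Definition~\ref{def:objEqGeneral}, i.e.\ a 2-cell whose every carrier edge $\gamma_X \colon f(X) \Rightarrow g(X)$ is an object equality.

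\emph{Reflexivity.} For $f =_o f$ I would take the trivial equivalence extension $\thT_0 \skeqext \thT_0$ (the empty composite of simple equivalence extensions) and, for each node $X$ of $\thT_1$, the carrier edge $\sknid(f(X))$, which is an object equality by the first clause of Definition~\ref{def:objeq} ($X = Y$ and $\gamma \skdiag \sknid(X)$). The associated object equalities on edges are the degenerate commutative squares with $\gamma_u = f(u)$; these commute trivially by the unit laws. One should check this data genuinely constitutes a 2-cell $\thT_1^\to \skhom \thT_0$, which amounts to the fact that identity edges assemble into the ``identity homomorphism'' $\thT_1^\to$-structure — routine from the construction of $\thT^\to$ in Section~\ref{sec:SkExx}.

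\emph{Symmetry.} Suppose $f =_o g$, witnessed by an object equality in some $\thT_0 \skeqext \thT_0'$. By Lemma~\ref{lem:objeq}(4), each carrier $\gamma_X \colon f(X) \Rightarrow g(X)$ becomes an isomorphism in a further equivalence extension, with inverse $\gamma_X^{-1} \colon g(X) \Rightarrow f(X)$ again an object equality. Since there are only finitely many nodes in $\thT_1$ (contexts are finite), I would compose these finitely many equivalence extensions — legitimate since equivalence extensions are closed under composition, and each is still an equivalence extension of the previous by reindexing — to obtain a single $\thT_0 \skeqext \thT_0''$ in which all the inverses $\gamma_X^{-1}$ are available object equalities. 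It then remains to see that the squares for edges invert coherently: for $u\colon X\to Y$ in $\thT_1$, from $\gamma_X\,g(u)\skdiag f(u)\,\gamma_Y$ (rewritten via Proposition~\ref{prop:eqExtEqLogic}) one derives $\gamma_Y^{-1}\,f(u)\skdiag g(u)\,\gamma_X^{-1}$ in a further equivalence extension, using the fillin-uniqueness manipulations of Proposition~\ref{prop:eqExtEqLogic}(3) and the invertibility of $\gamma_X,\gamma_Y$. This yields an object equality from $g$ to $f$.

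\emph{Transitivity.} Suppose $f =_o g$ in $\thT_0 \skeqext \thT_0'$ and $g =_o h$ in $\thT_0 \skeqext \thT_0''$. Reindex one along the other and compose to get a common refinement $\thT_0 \skeqext \thT_0'''$ carrying both families of object equalities $\gamma_X\colon f(X)\Rightarrow g(X)$ and $\gamma_X'\colon g(X)\Rightarrow h(X)$. By Lemma~\ref{lem:objeq}(3), for each node $X$ there is a further equivalence extension producing an object equality $\delta_X\colon f(X)\Rightarrow h(X)$ with $\gamma_X\gamma_X'\skdiag\delta_X$; composing over the finitely many nodes gives a single equivalence extension with all the $\delta_X$. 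Finally the edge-squares compose: pasting the square for $u$ under $(\gamma_X,\gamma_Y)$ with that under $(\gamma_X',\gamma_Y')$ gives a commutative square under $(\delta_X,\delta_Y)$, the new diagonal $\delta_u$ being the composite (introduced by a composition rule of an equivalence extension), with commutativity following from Proposition~\ref{prop:eqExtEqLogic}. This is the required object equality from $f$ to $h$.

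\emph{Main obstacle.} The only real subtlety is bookkeeping: each application of Lemma~\ref{lem:objeq} lives in its \emph{own} equivalence extension, and one must repeatedly take common refinements — via reindexing and composition — and confirm that object equalities, once present, persist under further equivalence extension (which they do, since their defining data is preserved by sketch homomorphisms, and $\skeqext$-maps are in particular homomorphisms). Ensuring finiteness of this process is exactly where we use that $\thT_1$ is a context with finitely many nodes and edges. The edge-coherence checks for symmetry and transitivity reduce entirely to Proposition~\ref{prop:eqExtEqLogic}, so no new ideas are needed there.
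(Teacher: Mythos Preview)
Your proposal is correct and takes essentially the same approach as the paper: reduce each of reflexivity, symmetry, and transitivity to the corresponding parts of Lemma~\ref{lem:objeq}, with the only additional care being to pass to a common refinement of the two equivalence extensions in the transitivity case. The paper's own proof is in fact just two sentences (``a straightforward extension of Lemma~\ref{lem:objeq}; for transitivity, work in a common refinement''), so you have simply spelled out what the paper leaves implicit---including the edge-coherence bookkeeping via Proposition~\ref{prop:eqExtEqLogic} and the repeated passage to larger equivalence extensions.

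One minor correction: for reflexivity you cannot quite use the \emph{trivial} equivalence extension, since even the witness $\sknid(f(X))\skdiag\sknid(f(X))$ and the unit-law commutativities for the naturality squares need to be adjoined; you acknowledge the unit laws later in the paragraph, so the argument is fine, but the extension is not literally empty.
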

\begin{proof}
  This is a straightforward extension of Lemma~\ref{lem:objeq}.
  For transitivity $f_0 =_o f_1 =_o f_2$,
  note that we may have different equivalence extensions for $f_0 =_o f_1$
  and for $f_1 =_o f_2$. Work in a common refinement.
\end{proof}

\section{Context maps}\label{sec:ConMaps}
In Section~\ref{sec:Con} we shall define a 2-category $\Con$ whose objects are contexts,
and whose morphisms $\thT_0 \to \thT_1$ are in bijection with strict AU-functors
$\AUpres{\thT_0} \leftarrow \AUpres{\thT_1}$.
In fact, its 1-cells will be what we shall define here as \emph{context maps}.

In this section we investigate the 1-category $\Con_{\skeqext\skhommap}$ of contexts and context maps,
from which $\Con$ is got by factoring out a congruence based on objective equality.
To save repetition, we shall exploit the fact that object equalities are a special case of 2-cells,
and the present section is really a collection of ad hoc preliminary results about
2-cells in the not-a-2-category $\Con_{\skeqext\skhommap}$.

We already have a category $\Con_{\skhom}$ of contexts and context homomorphisms --
and we shall also write $\Con_{\skhommap}$ for its opposite.
Recall that we consider two sketch homomorphisms equal if they agree on the nodes and edges.

\begin{definition}\label{def:contMorph}
  Let $\thT_0,\thT_1$ be contexts.
  Then a \emph{context map} from $\thT_0$ to $\thT_1$ is an opspan $(e,f)$
  from $\thT_0$ to $\thT_1$, where $e$ is an equivalence extension:
  \[
    \xymatrix@1{
      {\thT_0}
        \ar@{->}[r]^{e}_{\skeqext}
      & {\thT'_0}
      & {\thT_1}
        \ar@{->}[l]_{f}
    }
  \]
\end{definition}

Using reindexing, we can compose context maps.

\begin{definition}
  Suppose we have context maps as in the bottom two rows of the following diagram,
  and we reindex $e_1$ along $f_0$.
  \[
    \xymatrix{
      & & {\thT''_0}
      \\
      & {\thT'_0}
        \ar@{.>}[ur]^{f_0(e_1)}
      & & {\thT'_1}
        \ar@{.>}[ul]_{\varepsilon}
      \\
      {\thT_0}
        \ar@{->}[ur]^{e_0}_{\skeqext}
      & & {\thT_1}
        \ar@{->}[ul]_{f_0}
        \ar@{->}[ur]^{e_1}_{\skeqext}
      & & {\thT_2}
        \ar@{->}[ul]_{f_1}
    }
  \]
  Then the composite $(e_0,f_0)(e_1,f_1)$ is
  $(e_0 f_0(e_1), \varepsilon \circ f_1)$.
\end{definition}

Contexts and context maps form a category $\Con_{\skeqext\skhommap}$,
with composition as defined and identity maps $(\Id,\Id)$.
Note that $(e,f)$ is the composite $(e,\Id)(\Id,f)$.

\begin{definition}\label{def:ConTwoCell}
  A \emph{2-cell} in $\Con_{\skeqext\skhommap}$, between $\thT_0$ and $\thT_1$,
  is a context map $(e,\alpha)$ from $\thT_0$ to $\thT_1^{\to}$.
  Its domain and codomain are $(e,\alpha\circ i_{\lambda})$ ($\lambda = 0,1$).

  An \emph{object equality} is a 2-cell $(e,\gamma)$ in which $\gamma$ is an object equality.

  Two context maps $(e_i,f_i)$, with the same domain and codomain,
  are \emph{objectively equal} if $e_0$ and $e_1$ have a common refinement $e$
  such there is an object equality from $f_0\varepsilon_0$ to $f_1\varepsilon_1$.
  \begin{equation}\label{eq:mapObEq}
  \xymatrix{
    & {}
    \\
    {}
      \ar@{->}[ur]^{\varepsilon_0}
    & & {}
      \ar@{->}[ul]_{\varepsilon_1}
    \\
    & {}
      \ar@{->}[lu]^{e_0}
      \ar@{->}[ru]_(0.2){e_1}
      \ar@{->}[uu]^{e}
    & & {}
      \ar@{->}[lllu]^(0.2){f_0}
      \ar@{->}[lu]_{f_1}
  }
  \end{equation}
\end{definition}
From Proposition~\ref{prop:objEqER} it is easy to see that objective equality is an equivalence relation on each hom-set of $\Con_{\skeqext\skhommap}$.

$\Con_{\skeqext\skhommap}$ is not a 2-category --
it lacks vertical and horizontal composition.
For example, suppose we have two vertically composable 2-cells between $\thT_0$ and $\thT_1$.
To compose them we need to be able to compose the carrier edges in $\thT_0$.

For the time being we examine \emph{whiskering}, horizontal composition of 2-cells with 1-cells.

\emph{Left whiskering}%
\footnote{
  It is arguable which is left and which is right.
  We take it that left whiskering is for when the 1-cell is on the left
  in diagrammatic order of context maps.
}
is done by composition of context maps
$\xymatrix@1{{\thT_0} \ar@{->}[r] & {\thT_1} \ar@{->}[r] & {\thT_2^{\to}}}$.

\emph{Right whiskering} by context maps $(\Id,f)$ is similar,
with a composition
$\xymatrix@1{{\thT_0} \ar@{->}[r] & {\thT_1^{\to}} \ar@{->}[r]^{(\id,f^{\to})} & {\thT_2^{\to}}}$.

For whiskering as defined so far, it is clear that --
\begin{enumerate}
\item
  all possible associativities hold, and
\item
  whiskering preserves object equalities.
\end{enumerate}

The remaining case is right whiskering by maps $(e,\Id)$.
For these we start to need equivalence extensions.

\begin{lemma}\label{lem:eqExtTwoCell}
  Let $\thT_1$, $\thT'_1$ and $\thT_0$ be contexts.
  Suppose we have an equivalence extension $e_1 \colon \thT_1 \skeqext \thT'_1$,
  two homomorphisms $f_0,f_1\colon\thT'_1\skhom\thT_0$,
  and a 2-cell $\alpha\colon e_1 f_0 \to e_1 f_1$.
  Then,
  \begin{enumerate}
  \item
    There is some equivalence extension $e_0\colon\thT_0\skeqext\thT'_0$
    and a 2-cell $\alpha'\colon f_0 e_0 \to f_1 e_0$
    such that $\alpha e_0 = e_1^{\to} \alpha'$.
    \[
      \xymatrix{
        {\thT'_0}
        & {\thT'^{\to}_1}
          \ar@{.>}[l]_{\alpha'}
        & {\thT'_1}
          \ar@{->}[l]_{i_{\lambda}}
          \ar@{->}[dll]_(0.7){f_{\lambda}}
        \\
        {\thT_0}
          \ar@{.>}[u]^{e_0}
        & {\thT_1^{\to}}
          \ar@{->}[u]_(0.3){e_1^{\to}}
          \ar@{->}[l]^{\alpha}
        & {\thT_1}
          \ar@{->}[u]_{e_1}
          \ar@{->}[l]^{i_{\lambda}}
      }
    \]
  \item
    For any such $e_0$ and $\alpha'$ as in (1),
    suppose we also have (for the same $e_0$) $\alpha''$
    satisfying the same conditions as for $\alpha'$.
    Then $\alpha' =_{o} \alpha''$.
    (This just means that unary commutativities can be found
    between the actions of $\alpha'$ and $\alpha''$ on edges,
    since their actions on nodes are already constrained up to equality by $f_0$ and $f_1$.)
  \end{enumerate}
\end{lemma}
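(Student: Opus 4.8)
Part~(1) reduces, by induction on the number of simple steps making up $e_1$ together with the functoriality $e_1^{\to}=c^{\to};s^{\to}$ of the hom-sketch construction (applied when $e_1=c;s$ with $s$ a single step), to the case where $e_1$ is a single simple equivalence extension: given $e'_0$ and $\beta\colon f''_0e'_0\to f''_1e'_0$ handling the initial segment $c$ (with $f''_\lambda=s;f_\lambda$), one applies the single-step case to $s$, the homomorphisms $f_\lambda e'_0$ and the $2$-cell $\beta$ to get $e''_0$ and $\alpha'$; then $e_0=e'_0;e''_0$ works, since $\alpha e_0=c^{\to}\beta e''_0=c^{\to}s^{\to}\alpha'=e_1^{\to}\alpha'$.

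So let $e_1$ be a single step, $\thT'_1=\thT_1+\delta$. The hom-sketch $(\thT'_1)^{\to}$ is obtained from $(\thT_1)^{\to}$ by adjoining two $\lambda$-copies ($\lambda=0,1$) of everything in $\delta$ --- nodes, edges, commutativities, universals --- together with one $\theta$-edge per fresh node and one $\theta$-edge plus two commutativities per fresh edge. Since a sketch homomorphism is determined by its action on $\skn$ and $\ske$, I define $\alpha'$ there: on the copy of $(\thT_1)^{\to}$ put $\alpha'=\alpha;e_0$, and on the $\lambda$-copy of a fresh ingredient put its $f_\lambda$-image --- which already lies in $\thT_0$, because $f_\lambda\colon\thT'_1\skhom\thT_0$ is a homomorphism (e.g.\ if a fresh node $X$ is a pullback of $u_1,u_2$, then $f_\lambda(X)$ is the pullback node of $f_\lambda(u_1),f_\lambda(u_2)$, already present in $\thT_0$). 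Both prescriptions are forced: the first by the requirement $\alpha e_0=e_1^{\to}\alpha'$, the second by $i_\lambda;\alpha'=f_\lambda;e_0$, i.e.\ that $\alpha'$ be a $2$-cell $f_0e_0\to f_1e_0$. The only remaining freedom is the image of each new $\theta$-edge, and this is where $e_0$ enters: it is the equivalence extension of $\thT_0$ that adjoins, for every fresh ingredient, its ``induced comparison morphism''. For a fresh edge introduced as a composite this is the matching composite of $f_0$-images with suitable edges $\alpha(\theta_Y)$ (dually for the inverse edges supplied by the balance, stability and exactness rules); for a fresh node introduced as a pullback it is the fillin from $f_0(X)$ to $f_1(X)$ whose components are $\alpha(\theta_{Y_i});f_0(\mathsf{p}^i)$, $Y_i$ the codomain of $u_i$ --- a genuine morphism, not an object equality, since the two universals have different data --- and correspondingly the canonical fillin for terminals, the dual fillin for pushouts, and (via Remark~\ref{rem:listf}) $\qeql$ of the induced comparison on the underlying object for list objects. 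Introducing each of these morphisms with its defining commutativities is an equivalence-extension step; the ``left'' commutativity attached to a fresh $\theta$-edge then holds by construction, while the ``right'' one, and the $\lambda$-copies of the commutativities in $\delta$, are derivable from the $\theta$-naturality already recorded for older ingredients by the calculus of Proposition~\ref{prop:eqExtEqLogic}, so we absorb these derived commutativities into $e_0$ as well. Checking these derivations --- essentially, that the naturality squares for the fresh ingredients follow from those for the old ones --- is the bulk of the argument.

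Part~(2) is shorter. Two homomorphisms $(\thT'_1)^{\to}\skhom\thT'_0$ agree iff they agree on $\skn$ and $\ske$. Every node of $(\thT'_1)^{\to}$ is a $\lambda$-copy of a node of $\thT'_1$, and there both $\alpha'$ and $\alpha''$ are forced to equal $f_\lambda;e_0$; hence they agree on all nodes, and the node-carriers of a putative object equality may be taken to be identities. Likewise $\alpha'$ and $\alpha''$ agree on every $\lambda$-copy of an edge, and on every $\theta$-edge inherited from $(\thT_1)^{\to}$ (because $e_1^{\to}\alpha'=\alpha e_0=e_1^{\to}\alpha''$). The only remaining edges are the $\theta$-edges of the fresh ingredients; each is pinned down --- through its attached commutativities, which $\alpha'$ and $\alpha''$ both respect --- as a specific composite or fillin built from data on which $\alpha'$ and $\alpha''$ already coincide, so for each of these finitely many edges a unary commutativity between the two images is available in some equivalence extension of $\thT'_0$ (by the unit-law, composition or fillin-uniqueness rules, Proposition~\ref{prop:eqExtEqLogic}). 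Taking a common refinement of these finitely many equivalence extensions yields one carrying an object equality from $\alpha'$ to $\alpha''$, so $\alpha'=_o\alpha''$.

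The step I expect to be the main obstacle is the single-step case of part~(1), specifically the universal-node rules, where one must introduce the induced comparison fillin between two differently-constructed universals and then verify, via Proposition~\ref{prop:eqExtEqLogic}, that each hom-sketch commutativity it enters follows from the naturality data that $\alpha$ already carries. Everything else is routine bookkeeping.
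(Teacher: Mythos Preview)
Your approach is essentially the paper's: reduce to a single simple equivalence-extension step, define $\alpha'$ on the $\lambda$-copies by $f_\lambda$ and on the old $\theta$-edges by $\alpha e_0$, then build $e_0$ to house the new $\theta$-edges and the commutativities they need; part~(2) follows because those new $\theta$-edges are pinned down as composites or fillins.

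Two remarks. First, you do not explicitly treat the case where $e_1$ adjoins a \emph{fillin edge} to an existing universal (e.g.\ $w=\langle v_1,v_2\rangle_{u_1,u_2}$ for a pullback already present, or the recursion morphism $r$ for a list object). The paper works this case out via a diagram chase showing $f_0(w)\alpha_P\skdiag\alpha_Y f_1(w)$ using uniqueness of fillins in $f_1$'s copy of the universal, and identifies \emph{this}---not the universal-node case you flag---as the main obstacle (the list-fillin case is the most involved and is left to the reader). Your framework handles it the same way (the $\theta_w$ is forced to be the composite $\alpha_Y f_1(w)$, and one must verify the other naturality triangle), but it deserves explicit mention. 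Second, a small slip: in the pullback-node case the components of the comparison fillin should be $f_0(\mathsf p^i);\alpha(\theta_{Y_i})$ in diagrammatic order, with $Y_i$ the \emph{domain} of $u_i$ (the codomain of the projection), not its codomain.
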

\begin{proof}
  It suffices to consider simple equivalence extensions $e_1$,
  and the only non-trivial ones are those that introduce nodes or edges.
  If $e_1$ introduces only commutativities,
  then the action of $\alpha'$ is already explicit in that of $\alpha$
  and $e_0$ just has to introduce the images under $f_0$ and $f_1$
  of those commutativities.
  This applies to the unit and associativity rules,
  and to the rules for the uniqueness of fillins.

  For the first case, suppose $e_1$ adjoins a composite $uv \skdiag_{XYZ} w$.
  In any case where $e_1$ introduces an edge $w\colon X\to Z$,
  in $\thT'^{\to}_1$ we have fresh edges $i_{\lambda}(w)$ and $\theta_w$,
  whose interpretations under $\alpha'$ must be $f_{\lambda}(w)$ and $\alpha_X f_1(w)$.
  This is already enough to prove the uniqueness, (2).
  For existence, we can certainly adjoin the composite $\alpha_X f_1(w)$ in an
  equivalence extension.
  Algebraically we check that appropriate square for $w$ commutes:
  \begin{align*}
    f_0(w)\alpha_Z & = f_0(u)f_0(v)\alpha_Z = f_0(u)\alpha_v = f_0(u)\alpha_Y f_1(v)\\
      & = \alpha_u f_1(v) = \alpha_X f_1(u) f_1(v) = \alpha_X f_1(w)
    \text{.}
  \end{align*}
  By Proposition~\ref{prop:eqExtEqLogic} we can find an equivalence extension with sufficient edges and commutativities to express this.

  A similar argument applies to all those equivalence extensions
  that adjoin an inverse to a particular edge $u\colon X \to Y$.
  We check
  \begin{align*}
    f_0(u^{-1})\alpha_X & = f_0(u^{-1})\alpha_X f_1(\sknid(X)) = f_0(u^{-1})\alpha_X f_1(u)f_1(u^{-1}) \\
      & = f_0(u^{-1})f_0(u)\alpha_Y f_1(u^{-1}) = f_0(\sknid(Y))\alpha_Y f_1(u^{-1})
        = \alpha_Y f_1(u^{-1}) \text{.}
  \end{align*}

  Next, suppose a node $X$ is introduced by a universal.
  The commutativities required for a homomorphism ensure that $\alpha'_X$ has to be the
  canonical fillin,
  and then the appropriate squares commute with respect to the structure edges
  to ensure that we have a homomorphism.

  Finally we consider fillins.

  We first look at pullbacks.
  These will show the method also for pushouts, terminals and initials,
  although list objects are more complicated.

  Suppose in $\thT_1$ we have a pullback $P$ of some opspan,
  and suppose that $u\colon Y \to P$
  fills in for a cone that has, for each projection $p\colon P\to X$,
  a morphism $q\colon Y \to X$.
  We need to show $f_0(u)\alpha_P \skdiag \alpha_Y f_1(u)$,
  and it suffices to show that when composed with each pullback projection for $f_1(p)$.
  \[
    \xymatrix@C=2cm{
      {f_0(Y)}
        \ar@{->}[r]^{\alpha_Y}
        \ar@{->}[d]_{f_0(u)}
        \ar@{->}@/_3pc/[dd]_{f_0(q)}
      & {f_1(Y)}
        \ar@{->}[d]^{f_1(u)}
        \ar@{->}@/^3pc/[dd]^{f_1(q)}
      \\
      {f_0(P)}
        \ar@{->}[d]_{f_0(p)}
        \ar@{->}[r]_{\alpha_P}
      & {f_1(P)}
        \ar@{->}[d]^{f_1(p)}
      \\
      {f_0(X)}
        \ar@{->}[r]_{\alpha_X}
      & {f_1(X)}
    }
  \]
  The bounding quadrangle, the lower small rectangle and the two side-bows all commute,
  and so (in some suitable equational extension) we can show
  $f_0(u)\alpha_P f_1(p) \skdiag \alpha_Y f_1(u)f_1(p)$.

  Finally we look at list fillins.
  Suppose $\thT_1$ has the data for a list fillin (see diagrams~\eqref{eq:eqExtLfill})
  and $\thT'_1$ adjoins the fillin $r$.
  Our task is to show that the two composites $f_0(r)\alpha_Y$ and $\alpha_{L\times B}f_1(r)$
  are equal in some equivalence extension,
  and it suffices to show that they are both fillins for
  \[
    \xymatrix@1{
      {f_0(B)}
        \ar@{->}[r]_{\alpha_B}
      & {f_1(B)}
        \ar@{->}[r]_{f_1(y)}
      & {f_1(Y)}
      & {f_1(A)\times f_1(Y)}
        \ar@{->}[l]^-{f_1(g)}
      && {f_0(A)\times f_1(Y)}
        \ar@{->}[ll]^-{\alpha_A \times f_1(Y)}
    }
  \]
  This is left to the reader.
\end{proof}

Note that if $\alpha$ is an object equality then so is $\alpha'$.
In other words, we can cancel equivalence extensions $e$ from objective equalities:
if there is an object equality from $ef_1$ to $ef_2$,
then $f_1$ and $f_2$ are objectively equal.

\begin{definition}\label{def:rtWhisker}
  Let $\alpha \colon \thT_1^{\to} \skhom \thT_0$ be a 2-cell,
  with domain and codomain
   $f_0$ and $f_1$,
  and let $e_1\colon\thT_1 \skeqext \thT'_1$ be an equivalence extension.
  Reindexing $e_1$ along $f_0$ and $f_1$ gives two equivalence extensions $f_{\lambda}(e_1)$
  of $\thT_0$,
  by homomorphisms $\varepsilon_i$.
  (See diagram~\eqref{eq:rtWhisk}.)

  Then a \emph{right whiskering} $(\Id,\alpha)(e_1,\Id)$ is
  a map $(e,\alpha')$ where $e$ is a common refinement of $f_0(e_1)$ and $f_1(e_1)$,
  and $\alpha'$ is a 2-cell from $\varepsilon_0$ to $\varepsilon_1$ in $e$.
\end{definition}
Note that, because of the need to use a common refinement of $f_0(e_1)$ and $f_1(e_1)$,
the domain of the whiskering is not strictly equal to what it should be at the 1-cell level.
However, they are objectively equal.
The codomain is similar.

\begin{proposition}\label{prop:rtWhisk}
  Right whiskering $(\Id,\alpha)(e_1,\Id)$ exists and is unique up to objective equality.
\end{proposition}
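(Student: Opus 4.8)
The plan is to derive both existence and essential uniqueness from Lemma~\ref{lem:eqExtTwoCell}; the only extra work is to replace the two reindexed extensions $f_0(e_1)$ and $f_1(e_1)$ by a single common refinement, so that the two homomorphisms that must be compared share a codomain.

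First I would record the reindexing data. For $\lambda = 0,1$, pushing $e_1\colon\thT_1\skeqext\thT'_1$ out along $f_\lambda$ gives a square
\[
  \xymatrix{
    {\thT_0^{(\lambda)}}
    & {\thT'_1}
      \ar@{->}[l]_{\varepsilon_\lambda}
    \\
    {\thT_0}
      \ar@{->}[u]^{f_\lambda(e_1)}
    & {\thT_1}
      \ar@{->}[l]^{f_\lambda}
      \ar@{->}[u]_{e_1}
  }
\]
with commutativity $f_\lambda ; f_\lambda(e_1) = e_1 ; \varepsilon_\lambda$ and with $f_\lambda(e_1)$ again an equivalence extension. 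Since $f_0(e_1)$ and $f_1(e_1)$ are two equivalence extensions of the single context $\thT_0$, they admit a common refinement $e\colon\thT_0\skeqext\thT'_0$ (reindex one along the other and compose), say with refinement homomorphisms $r_\lambda\colon\thT_0^{(\lambda)}\skhom\thT'_0$ and $f_\lambda(e_1) ; r_\lambda = e$. Putting $g_\lambda = \varepsilon_\lambda ; r_\lambda\colon\thT'_1\skhom\thT'_0$ and composing the pushout squares with $r_\lambda$ yields $f_\lambda ; e = e_1 ; g_\lambda$. Hence the composite $\alpha ; e\colon\thT_1^{\to}\skhom\thT'_0$ has domain $f_0 ; e = e_1 g_0$ and codomain $f_1 ; e = e_1 g_1$, that is, it is a 2-cell from $e_1 g_0$ to $e_1 g_1$ exactly in the sense of Lemma~\ref{lem:eqExtTwoCell}, taken with $e_1$ as the equivalence extension, $g_0, g_1$ as the two homomorphisms into the target $\thT'_0$, and $\alpha ; e$ as the given 2-cell.

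For existence I would then invoke Lemma~\ref{lem:eqExtTwoCell}(1): it produces an equivalence extension $e_0\colon\thT'_0\skeqext\thT''_0$ and a 2-cell $\alpha'\colon\thT'^{\to}_1\skhom\thT''_0$ from $g_0 ; e_0$ to $g_1 ; e_0$ with $(\alpha ; e) ; e_0 = e_1^{\to} ; \alpha'$. Now $e ; e_0\colon\thT_0\skeqext\thT''_0$ is again a common refinement of $f_0(e_1)$ and $f_1(e_1)$, via $r_\lambda ; e_0$, and $\alpha'$ is a 2-cell from $\varepsilon_0 ; (r_0 ; e_0)$ to $\varepsilon_1 ; (r_1 ; e_0)$ whose restriction along $e_1^{\to}$ is $\alpha ; (e ; e_0)$; so $(e ; e_0, \alpha')$ is a right whiskering $(\Id,\alpha)(e_1,\Id)$ in the sense of Definition~\ref{def:rtWhisker}.

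For uniqueness up to objective equality, suppose $(d,\beta)$ and $(d',\beta')$ are two right whiskerings. I would form a common refinement $d^{\ast}\colon\thT_0\skeqext\thT_0^{\ast}$ of the equivalence extensions $d$ and $d'$, with refinement maps $s,s'$ into its codomain, and push $\beta,\beta'$ forward to $\beta ; s$ and $\beta' ; s'$. By the compatibility built into a right whiskering, each of these restricts along $e_1^{\to}$ to $\alpha ; d^{\ast}$, and their actions on the nodes of $\thT'^{\to}_1$ are already pinned up to equality by $g_0,g_1$ reindexed into $d^{\ast}$, precisely as in the parenthetical of Lemma~\ref{lem:eqExtTwoCell}(2). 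Hence Lemma~\ref{lem:eqExtTwoCell}(2) applies and gives $\beta ; s =_{o} \beta' ; s'$, which by Definition~\ref{def:ConTwoCell} together with Proposition~\ref{prop:objEqER} says that $(d,\beta)$ and $(d',\beta')$ are objectively equal. The genuinely delicate part -- and what I expect to be the main obstacle -- is the bookkeeping: checking that the ``restricts to $\alpha$ whiskered by the equivalence extension'' compatibility is preserved under reindexing and under passage to common refinements, and that the slack it leaves (the domain and codomain of the whiskering being only objectively, not strictly, equal to the intended composites, as already noted after Definition~\ref{def:rtWhisker}) is exactly what is absorbed by moving to a common refinement; once that is set up, both clauses of Lemma~\ref{lem:eqExtTwoCell} do all the real work.
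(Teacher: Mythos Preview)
Your proposal is correct and follows essentially the same route as the paper: form a common refinement of $f_0(e_1)$ and $f_1(e_1)$, then invoke Lemma~\ref{lem:eqExtTwoCell}(1) for existence and Lemma~\ref{lem:eqExtTwoCell}(2) for uniqueness. If anything you are more explicit than the paper, which obtains the common refinement concretely by reindexing $f_1(e_1)$ along $f_0(e_1)$ and then dispatches uniqueness in a single sentence; your closing remark about the bookkeeping (that the domains and codomains of two candidate whiskerings agree only up to objective equality after passing to a common refinement) correctly identifies the one point where care is needed, and it is indeed absorbed by composing with the relevant object equalities before applying part~(2).
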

\begin{proof}
  First, reindex $f_1(e_1)$ along $f_0(e_1)$ to obtain a common refinement.
  We now apply Lemma~\ref{lem:eqExtTwoCell} to
  $\varepsilon_0 ; f_0(e)(f_1(e))$ and $\varepsilon_1 ; \varepsilon_{01}$
  to obtain an equivalence extension $e'$ and a 2-cell $\alpha'$ from
  $\varepsilon_0 ; f_0(e)(f_1(e)) ; e'$ to $\varepsilon_1 ; \varepsilon_{01} ; e'$.
  ($\thT_0$ in Lemma~\ref{lem:eqExtTwoCell} is $\thT'_0$ here.)

  The right whiskering $(\Id,\alpha)(e_1,\Id)$ is
  $(f_0(e_1);f_0(e_1)(f_1(e_1));e', \alpha')$.

  \begin{equation}\label{eq:rtWhisk}
  \xymatrix{
    & {}
    \\
    & {\thT'_0}
      \ar@{->}[u]^{e'}
    \\
    {}
      \ar@{->}[ur]^{f_0(e_1)(f_1(e_1))}
    & & {}
      \ar@{->}[ul]_{\varepsilon_{01}}
    & & {\thT'_1}
      \ar@{->}[ll]_{\varepsilon_1}
      \ar@{->}@/^0.5pc/[llll]^(0.8){\varepsilon_0}
      \xtwocell[llluu]{}^{f'_0}_{f'_1}{\alpha'}
    \\
    & {\thT_0}
      \ar@{->}[ul]^{f_0(e_1)}
      \ar@{->}[ur]_(0.6){f_1(e_1)}
    & & & {\thT_1}
      \xtwocell[lll]{}^{f_0}_{f_1}{\alpha}
      \ar@{->}[u]_{e_1}
  }
  \end{equation}

  The uniqueness part~(2) of Lemma~\ref{lem:eqExtTwoCell} now tells us
  that the 2-cell $\alpha'$ is unique up to unary commutativities of edges,
  so the right whiskering is unique up to objective equality.
\end{proof}

General right whiskering can now be defined by
\[
  (e_0,\alpha_0)(e_1,f_1) = (e_0,\Id)((\Id,\alpha)(e_1,\Id))(\Id,f_1)
  \text{.}
\]

\begin{proposition}\label{prop:whisker}
  \begin{enumerate}
  \item
    Whiskering obeys the usual associative laws up to objective equality.
  \item
    Whiskering preserves object equalities.
  \end{enumerate}
\end{proposition}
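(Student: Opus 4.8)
The plan is to reduce the statement to the behaviour of the three basic forms of whiskering and to isolate the one operation among them that is only defined up to objective equality. Recall that every context map factors as $(e,\Id)(\Id,f)$, and that general right whiskering is itself $(e_0,\alpha)(e_1,f_1) = (e_0,\Id)\bigl((\Id,\alpha)(e_1,\Id)\bigr)(\Id,f_1)$; so every instance of whiskering is assembled from composition of context maps together with \emph{(i)} left whiskering (composition of context maps into $\thT^{\to}$), \emph{(ii)} right whiskering by a map $(\Id,f)$ (composition with $(\id,f^{\to})$), and \emph{(iii)} right whiskering by a map $(e_1,\Id)$ (Definition~\ref{def:rtWhisker}). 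As already recorded above, forms (i) and (ii) satisfy all possible associativities, both with each other and with composition in $\Con_{\skeqext\skhommap}$, and functoriality of $(-)^{\to}$ makes $(\id,f^{\to})$ compose as expected; the only operation that introduces ambiguity is (iii), which by Proposition~\ref{prop:rtWhisk} --- ultimately by the uniqueness clause Lemma~\ref{lem:eqExtTwoCell}(2) --- is determined only up to objective equality.

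For part~(2) I would check that each basic operation, as well as composition of context maps, sends object equalities to object equalities; since a general whiskering is such a composite, this suffices. For (i) and (ii) this was already observed. For (iii) it is the remark following Lemma~\ref{lem:eqExtTwoCell}: if $\alpha$ is an object equality then so is the reindexed 2-cell $\alpha'$, because every construction in the proof of that lemma preserves object equalities (Lemma~\ref{lem:objeq}). Composition of context maps sends object equalities to object equalities because object equalities are closed under the composites and reindexings used to compose context maps (Lemma~\ref{lem:objeq}(3),(4)). Assembling these yields~(2).

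For part~(1) I would first dispatch the associativity laws not involving form (iii): these hold strictly, by the associativities of (i), (ii) and composition in $\Con_{\skeqext\skhommap}$ together with functoriality of $(-)^{\to}$. In the remaining cases at least one right whiskering by some $(e_1,\Id)$ occurs; here I would expand both sides of the asserted identity into composites of basic operations. Since (i), (ii) and context-map composition slide past strictly, each side is ultimately computed from the same underlying data --- a 2-cell together with a list of equivalence extensions to be reindexed along its legs --- but processed through Lemma~\ref{lem:eqExtTwoCell} in a possibly different grouping and with different intermediate choices of common refinement. The plan is then to choose one equivalence extension of the common codomain context refining all those appearing on either side (available by iterated reindexing and composition of equivalence extensions), transport both candidate whiskering 2-cells up to it, and apply Lemma~\ref{lem:eqExtTwoCell}(2): there the node-components are forced by the 1-cell boundary $f_0,f_1$ and the edge-components agree up to unary commutativities, so the two candidates are objectively equal at that level. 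Cancelling the refinement (as in the remark after Lemma~\ref{lem:eqExtTwoCell}) and chaining through Proposition~\ref{prop:objEqER} then gives objective equality of the two sides.

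The hard part will be the bookkeeping in this last step: confirming that the reordering of reindexings and refinements forced by each associativity law really does deliver \emph{objectively} equal --- not merely isomorphic --- data, and that a single dominating equivalence extension can always be arranged. The leverage I would lean on is that reindexing of equivalence extensions is functorial up to refinement, and that Lemma~\ref{lem:eqExtTwoCell}(2) pins the 2-cell component down uniquely up to edge-commutativities once its 1-cell boundary is fixed; with those in hand the remaining verifications are routine diagram chases in $\Con_{\skeqext\skhommap}$.
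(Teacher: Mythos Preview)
Your proposal is correct and rests on the same ingredients as the paper: the strict associativities already observed for left whiskering and for right whiskering by $(\Id,f)$, together with the uniqueness clause Lemma~\ref{lem:eqExtTwoCell}(2) (equivalently Proposition~\ref{prop:rtWhisk}) for right whiskering by $(e,\Id)$, and the remark after Lemma~\ref{lem:eqExtTwoCell} for part~(2).

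Where you diverge is in the organisation of part~(1). You set up a general scheme --- expand both bracketings, find a single dominating refinement, transport both 2-cells up, invoke uniqueness --- and flag the bookkeeping as ``the hard part''. The paper sidesteps all of that by reducing to a \emph{single} associativity instance, namely $(\Id,\alpha)(e_0,\Id)(e_1,\Id)$, and then observing directly that the bracketing $\bigl((\Id,\alpha)(e_0,\Id)\bigr)(e_1,\Id)$ already \emph{has the defining property} required of a right whiskering $(\Id,\alpha)(e_0 e_1,\Id)$; uniqueness up to objective equality then finishes immediately. Your common-refinement argument would also work, but the paper's observation that one side literally witnesses the universal property characterising the other eliminates the need for any explicit refinement comparison. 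In short: same uniqueness lever, but the paper pulls it once in a single normalised case rather than chasing arbitrary bracketings.
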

\begin{proof}
  (1)
  After what we said earlier, the only remaining issue is the associativity of
  $(\Id,\alpha)(e_0,\Id)(e_1,\Id)$.

  $((\Id,\alpha)(e_0,\Id))(e_1,\Id)$ has the property required for
  $(\Id,\alpha)(e_0 e_1,\Id)$, so they are objectively equal.

  (2)
  Clear from the remark after Lemma~\ref{lem:eqExtTwoCell}.
\end{proof}

Finally we prove the following lemma.
Note that if $cg$ is \emph{equal} to $f$,
then $e$ can be trivial, with $\varepsilon g' = g$.
With object equalities there is a little more work, and it is embodied in $e$.

\begin{lemma}\label{lem:Conpb}
  \begin{enumerate}
  \item
    Suppose we have the solid parts of the following diagram,
    \[ \xymatrix{
      & {\thT'_0}
        \ar@{.>}[dl]_{g'}
      & {\thT'_1}
        \ar@{->}[l]_{\varepsilon}
        \ar@{->}[dl]_{g}
      \\
      {\thT''_0}
      & {\thT_0}
        \ar@{.>}[l]^{e}
        \ar@{->}[u]^{f(c)}
      & {\thT_1}
        \ar@{->}[l]^{f}
        \ar@{->}[u]_{c}
    } \]
    where $c$ is an extension, the square is the reindexing,
    and we have an object equality $\gamma\colon f \Rightarrow cg$.

    Then we can find an equivalence extension $e\colon\thT_0\skeqext\thT''_0$
    and a homomorphism $g'\colon\thT'_0\to\thT''_0$ such that
    $f(c)g'$ is strictly equal to $e$
    and there is an object equality $\gamma'\colon \varepsilon g' \Rightarrow ge$
    such that $c\gamma' = \gamma e$.
  \item
    Suppose, in the situation above, we have an equivalence extension $e$
    and two homomorphisms $g'_i$ with the properties described.
    Then $g'_1$ and $g'_2$ are objectively equal in $\thT''_0$.
  \end{enumerate}
\end{lemma}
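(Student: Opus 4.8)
The plan is to reduce both parts to the case where $c\colon\thT_1\skext\thT_1'$ is a single simple extension, arguing by induction on the number of simple steps composing $c$. If $c$ is trivial, take $e=\Id$, $g'=g$, $\gamma'=\gamma$. For the inductive step write $c=c_1;c_2$ with $c_1$ simple; by functoriality of reindexing (composition of pushouts) the reindexing of $c$ along $f$ factors as the reindexing of $c_1$ along $f$ — producing $f(c_1)$ and a comparison map $\varepsilon_1$ — followed by the reindexing of $c_2$ along $\varepsilon_1$. Apply the single-step case to $c_1$ with $c_2;g$ in the role of $g$ (note that $\gamma\colon f\Rightarrow c_1;(c_2;g)$ is exactly the required input), obtaining an equivalence extension $e_1$, a homomorphism $g_1'$ and an object equality $\gamma_1'\colon\varepsilon_1;g_1'\Rightarrow c_2;(g;e_1)$. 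Reindex $c_2$ once more along $g_1'$ and apply the induction hypothesis to $c_2$ with data $(\varepsilon_1;g_1',\ g;e_1,\ \gamma_1')$, obtaining $e_2$, $g_2'$, $\gamma_2'$. Set $e=e_1;e_2$, let $g'$ be the reindexing of $g_1'$ followed by $g_2'$, and $\gamma'=\gamma_2'$; the required identities $f(c);g'=e$ and $c;\gamma'=\gamma;e$ then follow by chasing the pushout squares, using $f(c_1);g_1'=e_1$ and $c_1;\gamma_1'=\gamma;e_1$. This bookkeeping is routine.

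For a single simple extension $c_1$, the condition $f(c_1);g'=e$ forces $g'$ to coincide with $e$ on $\thT_0$ and $c;\gamma'=\gamma;e$ forces $\gamma'$ to be $e(\gamma)$ on the $\thT_1$-part, so in each case only the fresh ingredients of $\thT_0'$ need description; I would go through the list of Definition~\ref{def:extn}. For a fresh primitive node, terminal or initial, take $e=\Id$, send the fresh node (and its universal) to its $g$-image, and let $\gamma'$ be the identity there. For a fresh primitive edge $u\colon X\to Y$, let $e$ invert the object equality $\gamma_Y$ (Lemma~\ref{lem:objeq}(4)) and adjoin the composite $\gamma_X;g(u);\gamma_Y^{-1}$; send the fresh edge to that composite, with $\gamma'$ on it the resulting filled square. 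For a fresh commutativity, $g'$ has nothing fresh to do, and the only point is that the $g'$-image of the new commutativity holds in $\thT_0''$; this follows from $g$ preserving the corresponding commutativity of $\thT_1'$ together with the object equalities carried by $\gamma$, via the congruence calculus of Proposition~\ref{prop:eqExtEqLogic}, with $e$ an equivalence extension witnessing the derivation. For a fresh pullback, pushout or list universal on given data, let $e$ first adjoin a fresh universal of the same kind on the data $e(f(-))$ and then, using the relevant clause of Definition~\ref{def:objeq} (clause (2), its dual, or clause (4) with Remark~\ref{rem:listf}) together with Lemma~\ref{lem:objeq}, adjoin the object-equality structure comparing the subjects of this new universal with the $g$-images of the subjects of the original; then $g'$ sends the fresh universal to the new one and $\gamma'$ to the object equality just built. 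Here the hypothesis on $\gamma$ provides exactly the objective equalities between the data $f(-)$ and $g(-)$ that those clauses require, and the homomorphism conditions on $g'$ reduce to the defining commutativities of the new universal.

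For part (2): two solutions $(e,g_1')$ and $(e,g_2')$ already agree on $\thT_0$, so I would pass to a common equivalence extension of $\thT_0''$ in which the object equalities $\gamma_1',\gamma_2'$ are inverted. Each fresh node $\nu$ of $\thT_0'$ equals $\varepsilon(\mu)$ for a unique fresh node $\mu$ of $\thT_1'$, and $(\gamma_i')_\mu$ is an object equality $g_i'(\nu)\Rightarrow g(e(\mu))$, so $(\gamma_1')_\mu;((\gamma_2')_\mu)^{-1}$ is an object equality $g_1'(\nu)\Rightarrow g_2'(\nu)$ (Lemma~\ref{lem:objeq}(3),(4)), the identity on old nodes. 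On fresh edges, the uniqueness clauses for fillins in equivalence extensions, together with Proposition~\ref{prop:eqExtEqLogic}, supply the unary commutativities filling the corresponding faces. Assembling these gives an object equality from $g_1'$ to $g_2'$, i.e.\ $g_1'=_o g_2'$, in the manner of the uniqueness halves of Lemma~\ref{lem:extReindex} and Lemma~\ref{lem:eqExtTwoCell}.

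I expect the main obstacle to be the list-universal case of the single-step argument: one must thread the fresh terminal, the fresh product $A\times\qeql(A)$ and the list object simultaneously through Definition~\ref{def:objeq}(4) and the characterization of $\qeql(\gamma_A)$ in Remark~\ref{rem:listf}, verifying that all homomorphism commutativities and all object-equality fillins are compatible at once; and, to a lesser extent, keeping honest the inductive bookkeeping, where $c_2$ is reindexed twice (along $\varepsilon_1$ and then along $g_1'$) and must be reconciled with the equivalence extensions $e_1$ and $e_2$.
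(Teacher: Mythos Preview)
Your proof is correct and follows essentially the same route as the paper's: induction on the length of $c$ reducing to simple extensions, then case analysis on the kind of step, with part~(2) handled by combining the two object equalities $\gamma'_i$. The paper is much terser---for universals it simply says ``let $e$ adjoin the same universal'' without spelling out $\gamma'$, and for part~(2) it observes directly that every ingredient of $\thT'_0$ lies in the image of either $f(c)$ or $\varepsilon$, so one need only note $f(c)g'_1 = f(c)g'_2$ strictly while $\varepsilon g'_1 =_o ge =_o \varepsilon g'_2$ by Proposition~\ref{prop:objEqER}.
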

\begin{proof}
  (1) By induction we can assume that $c$ is a simple extension.

  If $c$ adjoins a primitive node $X$,
  then we define $e$ as trivial, and $g'(X) = g(X)$.

  If $c$ adjoins a primitive edge $u\colon X \to Y$ then in $\thT_0$ we have the solid part of
  \[ \xymatrix{
    {f(X)}
      \ar@{=>}[r]^{\gamma_X}
      \ar@{.>}[d]_{g'(u)}
    & {g(c(X))}
      \ar@{->}[d]^{g(u)}
    \\
    {f(Y)}
      \ar@{=>}[r]_{\gamma_Y}
    & {g(c(Y))}
  } \]
  and in a suitable equivalence extension of $\thT_0$
  we can define $g'(u)$ to make the square commute.

  Suppose $c$ adjoins a commutativity $vw\skdiag_{XYZ} u$.
  We have
  \[ \xymatrix{
    & {f(X)}
      \ar@{=>}[rr]
      \ar@{->}[dl]_{f(v)}
      \ar@{->}[dd]^(0.7){f(u)}
    && {g(c(X))}
      \ar@{->}[dl]_{g(c(v))}
      \ar@{->}[dd]^{g(c(u))}
    \\
    {f(Y)}
      \ar@{=>}[rr]
      \ar@{->}[dr]_{f(w)}
    && {g(c(Y))}
      \ar@{->}[dr]_{g(c(w))}
    \\
    & {f(Z)}
      \ar@{=>}[rr]
    && {g(c(Z))}
  } \]
  The square faces all commute because they are object equalities.
  Once $c$ has made the right-hand triangle commute,
  in a suitable equivalence extension we can deduce that so does the left-hand one.

  If $c$ adjoins a universal, then we let $e$ adjoin the same universal.

  (2) Every ingredient of $\thT'_0$ is in the image of either $f(c)$ or $\varepsilon$.
  It therefore suffices to note that $f(c)g'_1$ and $f(c)g'_2$ are strictly equal,
  while $\varepsilon g'_1$ and $\varepsilon g'_2$ are objectively equal
  by Proposition~\ref{prop:objEqER}.
\end{proof}

\section{The 2-category of contexts}\label{sec:Con}
We now define our 2-category $\Con$ in which the 0-cells are contexts,
and the 1-cells between $\thT_0$ and $\thT_1$ are in bijection with strict AU-functors
from $\AUpres{\thT_1}$ to $\AUpres{\thT_0}$.
At the same time, we shall make the reversal of direction by which a strict AU-functor
can be thought of as a transformation of models.
Thus we shall think of a 1-cell as a ``map'' from the ``space of models of $\thT_0$''
to the ``space of models of $\thT_1$''.

\subsection{$\Con$ as a 1-category}\label{sec:ConOneCat}
\begin{proposition}\label{prop:AUmapscat}
  Objective equality of context maps is a congruence on $\Con_{\skeqext\skhommap}$.

  Hence contexts and their maps modulo objective equality form a category $\Con$.
\end{proposition}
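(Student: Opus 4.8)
The plan is to verify the two things needed for $\Con$ to be a category: first that objective equality is a congruence on $\Con_{\skeqext\skhommap}$ (compatible with composition on both sides), and second that, given this, the quotient inherits a well-defined category structure. The second part is formal once the first is in hand, so the substance is the congruence check. Recall from Proposition~\ref{prop:objEqER} that objective equality $=_o$ is already an equivalence relation on each hom-set; what remains is: if $(e_0,f_0) =_o (e_0',f_0')\colon \thT_0\to\thT_1$ and $(e_1,f_1) =_o (e_1',f_1')\colon \thT_1\to\thT_2$, then the composites (as defined via reindexing) are objectively equal as maps $\thT_0\to\thT_2$.

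First I would reduce to two separate statements by the usual argument: since $=_o$ is transitive (Proposition~\ref{prop:objEqER}) and composition is associative in $\Con_{\skeqext\skhommap}$, it suffices to show (a) left compatibility — if $(e_1,f_1)=_o(e_1',f_1')$ then $(e_0,f_0)(e_1,f_1) =_o (e_0,f_0)(e_1',f_1')$ — and (b) right compatibility — if $(e_0,f_0)=_o(e_0',f_0')$ then $(e_0,f_0)(e_1,f_1) =_o (e_0',f_0')(e_1,f_1)$. For (a): an object equality witnessing $(e_1,f_1)=_o(e_1',f_1')$ lives over a common refinement of $e_1$ and $e_1'$; the composite $(e_0,f_0)(e_1,f_1)$ is built by reindexing $e_1$ along $f_0$, and reindexing (sketch pushout) carries object equalities to object equalities — this is exactly the content of Lemma~\ref{lem:Conpb} together with the remarks after Lemma~\ref{lem:eqExtTwoCell} and the fact (noted after Definition~\ref{def:equivExt}) that reindexing an equivalence extension along any homomorphism is again an equivalence extension. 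So the reindexed object equality, composed with $f_1$ (resp.\ $f_1'$), gives the required object equality between the two composites over a suitable common refinement.

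For (b): here the discrepancy is on the left leg, and the composite $(e_0,f_0)(e_1,f_1) = (e_0 f_0(e_1), \varepsilon\circ f_1)$ uses $f_0$ both in the extension $f_0(e_1)$ and in the reindexing homomorphism $\varepsilon$. An object equality $\gamma\colon f_0 \Rightarrow f_0'$ over a common refinement $e$ of $e_0,e_0'$ must be transported through the reindexing of $e_1$. The key step is to apply Lemma~\ref{lem:Conpb}(1) to the object equality $\gamma$ against the extension $e_1$ (viewing it as $c$ there), obtaining a further equivalence extension and a homomorphism that makes the two reindexed legs agree up to an object equality compatible with $e_1$; the uniqueness clause Lemma~\ref{lem:Conpb}(2) then pins this down up to objective equality, and right whiskering (Proposition~\ref{prop:rtWhisk}, Proposition~\ref{prop:whisker}) packages the result so that the two composites are objectively equal as maps $\thT_0\to\thT_2$. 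Throughout, whenever two object equalities are produced over different equivalence extensions, I would pass to a common refinement by reindexing one along the other, using Proposition~\ref{prop:objEqER} to glue — this is the recurring bookkeeping device.

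I expect the main obstacle to be precisely this bookkeeping: keeping track of which equivalence extension each object equality lives over and organising the common refinements so that the diagram~\eqref{eq:mapObEq} defining objective equality of maps is literally witnessed. The conceptual content is entirely supplied by Lemmas~\ref{lem:eqExtTwoCell} and~\ref{lem:Conpb} (existence and uniqueness of the transported structure) and by Proposition~\ref{prop:objEqER} (amalgamating refinements); the work is in assembling these into a clean verification that $\bigl((e_0,f_0)(e_1,f_1)\bigr)$ and $\bigl((e_0',f_0')(e_1',f_1')\bigr)$ share a common refinement carrying an object equality between their right legs. Once the congruence is established, the quotient $\Con$ has well-defined composition $[(e_0,f_0)]\cdot[(e_1,f_1)] := [(e_0,f_0)(e_1,f_1)]$, identities $[(\Id,\Id)]$, and associativity and unit laws descend directly from those of $\Con_{\skeqext\skhommap}$, completing the proof.
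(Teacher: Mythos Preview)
Your proposal is essentially correct and follows the same two-part decomposition as the paper: left composition handled by reindexing, right composition by right whiskering. The paper's proof is just much terser. For composing with a fixed $(e,f)$ on the left (your~(a)), the paper simply says ``reindex everything along $f$'': the common refinement and the object equality between $f_1\varepsilon_0$ and $f_1'\varepsilon_1$ all get pushed along $f_0$ by a sketch homomorphism, and object equalities are preserved by sketch homomorphisms as an elementary observation --- this does not require Lemma~\ref{lem:Conpb}, which deals with a different situation (factoring through a reindexing square given an object equality). For composing on the right (your~(b)), the paper directly invokes right whiskering and Proposition~\ref{prop:whisker}(2) (whiskering preserves object equalities), which is exactly your conclusion; your detour through Lemma~\ref{lem:Conpb} here is extraneous. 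So the substance is the same, but you can prune the references to Lemma~\ref{lem:Conpb} from both cases.
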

\begin{proof}
  It has already been remarked that objective equality is an equivalence relation on each hom-set.
  To show that it is a congruence, we show that if two context maps are objectively equal,
  then their composites with any $(e,f)$ are also objectively equal.
  On the left, we just reindex everything along $f$.
  On the right, we apply right whiskering by $(e,f)$,
  and use the fact that this preserves objective equality.
\end{proof}

We now have a functor $(\Id,-) \colon \Con_{\skhommap} \to \Con$ given by
    \[
      (\xymatrix@1{{\thT_0} \ar@{<-}[r]^{f} & {\thT_1}})
      \mapsto
      (\xymatrix@1{{\thT_0} \ar@{=}[r] & {\thT_0} \ar@{<-}[r]^{f} & {\thT_1}})
      \text{.}
    \]

\begin{theorem}\label{thm:ConUni}
  $\Con$ is free over $\Con_{\skhommap}$ subject to
  object equalities becoming equalities,
  and equivalence extensions becoming invertible.
\end{theorem}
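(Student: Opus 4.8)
The plan is to establish the universal property by constructing, for any category $\catD$ and any functor $G\colon\Con_{\skhommap}\to\catD$ that sends object equalities to equalities and equivalence extensions to isomorphisms, a unique functor $\bar G\colon\Con\to\catD$ with $\bar G\circ(\Id,-) = G$. The evident candidate is forced: a context map $(e,f)$ must go to $G(f)\circ G(e)^{-1}$ (reading composition in $\catD$ appropriately; I will use applicational order for $\catD$-arrows so this reads $\bar G(e,f) = G(f)G(e)^{-1}$, where $G(e)^{-1}$ exists by hypothesis). So the whole content is: (i) this assignment is well-defined on objective-equality classes; (ii) it is functorial; (iii) it is the unique such functor; and (iv) $\Con_{\skhommap}\to\Con$ itself sends object equalities to equalities and equivalence extensions to isomorphisms, so the stated universal data is realized.

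First I would check well-definedness. If $(e_0,f_0)$ and $(e_1,f_1)$ are objectively equal, then by Definition~\ref{def:ConTwoCell} there is a common refinement $e$ of $e_0,e_1$ (say $e_i\varepsilon_i = e$) together with an object equality $\gamma$ from $f_0\varepsilon_0$ to $f_1\varepsilon_1$. Since $G$ kills object equalities we get $G(\varepsilon_0)G(f_0) = G(\varepsilon_1)G(f_1)$ in $\catD$; and since $G$ inverts equivalence extensions, $G(e) = G(\varepsilon_i)G(e_i)$ is invertible, hence $G(\varepsilon_i)$ is invertible with $G(\varepsilon_i)^{-1} = G(e_i)G(e)^{-1}$. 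Combining, $G(f_0)G(e_0)^{-1} = G(f_0)G(\varepsilon_0)G(e)^{-1} = G(f_1)G(\varepsilon_1)G(e)^{-1} = G(f_1)G(e_1)^{-1}$, as required. Functoriality is then a direct computation using the composition law of Definition~\ref{def:contMorph}ff.: for $(e_0,f_0)$ followed by $(e_1,f_1)$ the composite is $(e_0\,f_0(e_1),\ \varepsilon\circ f_1)$ where the square is a reindexing, so $f(c)$ commuting with $\varepsilon$ over the pushout gives, after applying $G$ and using that $G$ of a reindexing square commutes (pushouts are preserved as commuting squares by any functor out of the syntactic category — here we only need the square itself, which is a commutativity in $\Con_{\skhom}$), the identity $G(\varepsilon\circ f_1)G(e_0\,f_0(e_1))^{-1} = G(f_1)G(e_1)^{-1}G(f_0)G(e_0)^{-1}$; the key cancellation is $G(\varepsilon)G(f_0(e_1))^{-1} = G(f_0)^{-1}\,(\text{something})\,G(f_0)$-type manipulation that follows because $f_0(e_1)$ and $\varepsilon$ fit into a reindexing (pushout) square over $f_0$ and $e_1$, and $G$ of that square commutes. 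Uniqueness of $\bar G$ is immediate: $(e,f) = (e,\Id)(\Id,f)$, the map $(\Id,f)$ is $(\Id,-)$ applied to $f$ so its image is forced to be $G(f)$, and $(e,\Id)$ is the inverse in $\Con$ of $(\Id,e)$ (indeed $(\Id,e)(e,\Id)$ reindexes $e$ along $e$ and the resulting composite is objectively equal to the identity, by the common-refinement argument — this is exactly the content of "equivalence extensions become invertible" inside $\Con$), so its image under any extension of $G$ must be $G(e)^{-1}$.

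That last parenthetical — that $(\Id,e)$ and $(e,\Id)$ are mutually inverse in $\Con$, and more generally that $(\Id,-)$ already sends equivalence extensions to isos and object equalities to equalities — is really item (iv) and I would prove it first, before the main argument, since it both motivates the candidate formula and is needed for uniqueness. For object equalities: if $f_0 =_o f_1$ then by Definition~\ref{def:objEqGeneral} some equivalence extension carries an object equality between them, which is precisely the witness making $(\Id,f_0)$ and $(\Id,f_1)$ objectively equal as context maps, so they are equal in $\Con$. For equivalence extensions $e\colon\thT\skeqext\thT'$: compute $(\Id,e)(e,\Id)$ by reindexing $e$ along $e$ — the reindexing square has $\thT$ at the bottom-left and $\thT'$ twice plus the pushout $\thT''$ — and check the resulting context map $\thT\to\thT$ is objectively equal to $(\Id,\Id)$; the other composite $(e,\Id)(\Id,e)$ is handled the same way. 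I expect the main obstacle to be precisely this last verification and the functoriality computation: keeping the reindexing/pushout bookkeeping straight, and confirming that "objectively equal to identity" really does hold, which ultimately rests on Lemma~\ref{lem:Conpb} (the pullback-of-extensions lemma) together with the fact, used throughout, that common refinements of equivalence extensions exist and that objective equality is a congruence (Proposition~\ref{prop:AUmapscat}). Everything else is formal once the candidate $\bar G(e,f) = G(f)G(e)^{-1}$ is written down.
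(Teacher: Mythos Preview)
Your proposal is correct and follows essentially the same approach as the paper: define the extension by $\bar G(e,f) = G(e)^{-1}G(f)$, verify well-definedness on objective-equality classes via common refinements, check functoriality, and establish that $(e,\Id)$ inverts $(\Id,e)$ in $\Con$ (the nontrivial direction indeed resting on Lemma~\ref{lem:Conpb}). The only minor simplification you miss is that for functoriality the paper reduces to the single case $(\Id,f);(e,\Id) = (f(e),\varepsilon)$ rather than tackling a general composite, which makes the ``key cancellation'' you allude to a clean one-line computation using the reindexing square $f\,f(e) = e\,\varepsilon$.
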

\begin{proof}
  If $e\colon \thT_0 \skeqext \thT'_0$ is an equivalence extension,
  then $(\Id,e)$ has inverse $(e,\Id)$ in $\Con$.

  We have $(e,\Id);(\Id,e) = (e,e)$, and this is objectively equal to $(\Id,\Id)$
  using $e$ as a refinement of $\Id$.

  For the other composite we get $(e(e),\varepsilon)$ by reindexing.
  Now by the remark preceding Lemma~\ref{lem:Conpb}, with $g$ as an identity,
  we get a homomorphism $g'$ with $e(e);g' = \varepsilon;g' = \Id$,
  showing that $(e(e),\varepsilon)$ is equal to the identity.

  It follows that, in $\Con$, every morphism can be expressed in the form
  $(\Id,e)^{-1};(\Id,f)$, where $e$ is an equivalence extension.

  Now suppose we have a functor $F\colon \Con_{\skhommap} \to \catC$
  with those properties.
  We must show it factors uniquely via $(\Id,-)$, with $F'\colon \Con \to \catC$.
  Uniqueness is clear: we must have
  \[
    F'(e,f) = F'(e,\Id);F'(\Id,f) = F(e)^{-1};F(f)
    \text{.}
  \]

  For existence, first we show that $F'$ thus defined transforms objective equality
  to equality.
  Suppose $(e_i,f_i)$ ($i = 0,1$) are objectively equal,
  as in diagram~\eqref{eq:mapObEq}.
  Then
  \[
    F(e_i)^{-1};F(f_i) = F(e)^{-1};F(\varepsilon_i);F(f_i)
      = F(e)^{-1};F(f_i \varepsilon_i)
  \]
  and these are equal for $i=0,1$ because $F$ transforms object equality to equality.

  It is obvious that $F'$ preserves identities,
  and for composition it suffices to consider the composite $(\Id,f);(e,\Id) = (f(e),\varepsilon)$.
  In $\catC$ we have
  \[
    \begin{split}
      F(f);F(e)^{-1} & = F(f(e))^{-1};F(f f(e));F(e)^{-1}
          = F(f(e))^{-1};F(e\varepsilon);F(e)^{-1} \\
        & = F(f(e))^{-1};F(\varepsilon)
          \text{.}
    \end{split}
\]
\end{proof}

\begin{lemma}\label{lem:ConpbOneCat}
  \begin{enumerate}
  \item
    Any reindexing square~\eqref{eq:reindex} for a context extension becomes a pullback
    square in $\Con$.
  \item
    In $\Con$, extension maps (i.e. those of the form $(\Id,c)$ where $c$ is an extension)
    can be pulled back along any morphism.
  \end{enumerate}
\end{lemma}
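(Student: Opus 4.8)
The plan is to deduce (2) from (1) by a pasting argument, and to prove (1) by recasting a cone of \emph{context maps} as a cone of sketch \emph{homomorphisms} and then feeding it to Lemma~\ref{lem:Conpb}. For (2): by Theorem~\ref{thm:ConUni} every morphism $m\colon\thT_0\to\thT_1$ of $\Con$ factors as $(\Id,e_m)^{-1};(\Id,\hat m)$ with $e_m\colon\thT_0\skeqext\thT_m$ an equivalence extension (so that $(\Id,e_m)$ is an isomorphism) and $\hat m\colon\thT_1\skhom\thT_m$ a homomorphism. Hence, by the pasting lemma for pullbacks, an extension map $(\Id,c)$ can be pulled back along $m$ in two stages: first along $(\Id,\hat m)$, which is exactly the reindexing square of $c$ along $\hat m$ and so a pullback by (1); then along the isomorphism $(\Id,e_m)^{-1}$, which is always possible. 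So it remains to prove (1).

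For (1): in $\Con$ the reindexing square~\eqref{eq:reindex} has $(\Id,f(c))$ and $(\Id,\varepsilon)$ emanating from $\thT'_0$ against $(\Id,f),(\Id,c)$ landing in $\thT_1$, and it commutes because the underlying sketch square does. Let a cone be given by $a\colon\thU\to\thT_0$ and $b\colon\thU\to\thT'_1$ with $a;(\Id,f)=b;(\Id,c)$. Putting $a$ and $b$ over a common equivalence extension of $\thU$ — harmless, since replacing the middle object of a context map by an equivalence extension of it leaves the morphism unchanged in $\Con$, via $(\Id,e)^{-1}=(e,\Id)$ and functoriality of $(\Id,-)$ — we may assume $a=(\Id,\hat a)$ and $b=(\Id,\hat b)$ for homomorphisms $\hat a\colon\thT_0\skhom\thU$ and $\hat b\colon\thT'_1\skhom\thU$. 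The cone equation then says precisely that $f;\hat a$ and $c;\hat b$, as homomorphisms $\thT_1\skhom\thU$, are objectively equal: there are an equivalence extension $e\colon\thU\skeqext\thU_1$ and an object equality from $f;\hat a;e$ to $c;\hat b;e$. Applying Lemma~\ref{lem:Conpb}(1) — with the role of its ``$f$'' played by $f;\hat a;e$, that of ``$g$'' by $\hat b;e$, and ``$c$'', ``$\gamma$'' as here — produces a further equivalence extension $e'\colon\thU_1\skeqext\thU_2$, the reindexing $\bar c$ of $c$ along $f;\hat a;e$ (with induced map $\varepsilon'$ out of $\thT'_1$), and a homomorphism $g'$ out of the codomain of $\bar c$ with $\bar c;g'=e'$ strictly and an object equality from $\varepsilon';g'$ to $\hat b;e;e'$. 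Now $\thT'_0$ is the sketch pushout of $c$ along $f$, and the pair $(\hat a;e;e',\ \varepsilon';g')$ is a compatible cocone on it — compatibility being immediate from $\bar c;g'=e'$ and the commutativity of $\bar c$'s reindexing square — so the pushout property supplies a unique $k\colon\thT'_0\skhom\thU_2$ with $f(c);k=\hat a;e;e'$ and $\varepsilon;k=\varepsilon';g'$. The context map $h:=(e;e',k)\colon\thU\to\thT'_0$ is then a fill-in: $h;(\Id,f(c))=(e;e',\hat a;e;e')$ is $a$ in $\Con$ by the remark on middle objects, and $h;(\Id,\varepsilon)=(e;e',\varepsilon';g')$ equals $(e;e',\hat b;e;e')=b$ because the object equality just produced makes the two middle-object homomorphisms objectively equal.

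For uniqueness, a second fill-in $h'$ is brought over a common middle object with $h$, giving $h=(\delta,\kappa)$ and $h'=(\delta,\kappa')$ with $\kappa,\kappa'\colon\thT'_0\skhom\thV$; the two cone equations then force $f(c);\kappa=_o f(c);\kappa'$ and $\varepsilon;\kappa=_o\varepsilon;\kappa'$ by transitivity of objective equality (Proposition~\ref{prop:objEqER}), so $\kappa$ and $\kappa'$ agree objectively on both legs of the pushout $\thT'_0$. Part~(2) of Lemma~\ref{lem:Conpb}, applied after normalising the data so that the legs in question are \emph{strictly} of the shape its hypotheses demand, together with the uniqueness clause of the pushout property, then yields $\kappa=_o\kappa'$, i.e.\ $h=h'$ in $\Con$. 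I expect this normalisation to be the main obstacle: one must thread the various common refinements so that Lemma~\ref{lem:Conpb} and the pushout property can be invoked with their strict-equality hypotheses literally met, all the while keeping the two pushout legs in agreement; everything else is formal manipulation of opspans together with the routine bookkeeping of passing between equalities in $\Con$ and object equalities at the sketch level.
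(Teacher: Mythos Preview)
Your approach is the paper's: both derive (2) from (1) by factoring a general map as an isomorphism followed by $(\Id,\hat m)$, and both prove (1) by pushing the cone to a common refinement, reading off an object equality $f\hat a\Rightarrow c\hat b$, and feeding it to Lemma~\ref{lem:Conpb}(1). Your pushout-produced $k$ is exactly the paper's fillin $\varepsilon' g'_2$: the paper stacks two reindexing squares (of $c$ along $f$, then of $f(c)$ along $g_1$) rather than invoking the pushout property explicitly, but reindexing \emph{is} a pushout, so the two constructions coincide.

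The one place your argument is incomplete is the uniqueness step you yourself flag. The ``normalisation'' you worry about is neither needed nor achievable: you cannot in general arrange $f(c);\kappa = f(c);\kappa'$ \emph{strictly} by passing to refinements, so Lemma~\ref{lem:Conpb}(2) cannot be cited verbatim. The fix is to run that lemma's \emph{argument} directly: by the coproduct description of a reindexing (Proposition~\ref{prop:protoext}), every node and edge of $\thT'_0$ lies in the image of $f(c)$ or of $\varepsilon$, so an object equality $\kappa\Rightarrow\kappa'$ can be patched together from the two given object equalities on each piece; consistency on the overlap (the $\thT_1$-image) is automatic by Lemma~\ref{lem:objeq}(5), since any two object equalities between the same pair of nodes agree in a suitable equivalence extension. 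This is what the paper means by ``uniqueness follows by the same argument as in Lemma~\ref{lem:Conpb}''.
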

\begin{proof}
  (1)
  Consider a diagram as on the left here, with the outer square commuting.
  \[
    \xymatrix{
      {\thU}
        \ar@{->}@/^1pc/[rr]^{(e_2,g_2)}
        \ar@{->}[dr]_{(e_1,g_1)}
      & {\thT'_0}
        \ar@{->}[r]_{(\Id,\varepsilon)}
        \ar@{->}[d]^{(\Id,f(c))}
      & {\thT'_1}
        \ar@{->}[d]^{(\Id,c)}
      \\
      & {\thT_0}
        \ar@{->}[r]_{(\Id,f)}
      & {\thT_1}
    }
    \quad
    \xymatrix{
      & {}
        \ar@{.>}[dl]_{g'_2}
      & {\thT'_0}
        \ar@{->}[l]_{\varepsilon'}
      & {\thT'_1}
        \ar@{->}[l]_{\varepsilon}
        \ar@{->}[dll]^(0.2){g_2}
      \\
      {}
      & {\thU}
        \ar@{.>}[l]^{e}
        \ar@{->}[u]_(0.6){g_1(f(c))}
      & {\thT_0}
        \ar@{->}[u]_(0.3){f(c)}
        \ar@{->}[l]^{g_1}
      & {\thT_1}
        \ar@{->}[u]_{c}
        \ar@{->}[l]^{f}
    }
  \]
  Taking a common refinement of $e_1$ and $e_2$,
  we might as well assume that they are both trivial and that we have an object equality
  $fg_1 \Rightarrow cg_2$.
  Now consider the diagram on the right,
  and apply Lemma~\ref{lem:Conpb} with $g_2$ for $g$.
  We obtain $e$ and $g_2'$, with $e$ an equivalence extension,
  strict equality $g_1(f(c));g'_2 = e$,
  and an object equality $\varepsilon\varepsilon' g'_2 \Rightarrow g_2 e$.

  The required fillin is $(e,\varepsilon' g'_2)$.
  It has the correct composites with $(\Id,f(c))$ and $(\Id,\varepsilon)$.
  Moreover, uniqueness follows by the same argument as in Lemma~\ref{lem:Conpb}.
  
  (2)
  After part (1), it suffices to show that $(\Id,c)$ can be pulled back
  along any map $(e,\Id)$ where $e\colon\thT_0 \skeqext \thT_1$ is an equivalence extension.
  This is trivial, because pullbacks along invertible morphisms always exist.
\end{proof}

\subsection{$\Con$ as 2-category}\label{sec:ConTwoCat}

We now develop the 2-categorical structure.

\begin{lemma}\label{lem:arrarr}
  Let $\thT$ be a context.
  Then $(\thT^{\to})^{\to}$ has an involution $(e,\tau)$
  such that $(\Id,i_{\mu})(e,\tau) = (\Id,i_{\mu}^{\to})$.
\end{lemma}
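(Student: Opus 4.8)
The plan is to realise $(e,\tau)$ as the \emph{transpose} of $(\thT^{\to})^{\to}$: on models, which are commutative squares of $\thT$-models carrying two pairs of parallel homomorphisms, it is the self-map that interchanges the two pairs. Concretely, I would first fix notation for $(\thT^{\to})^{\to}$: its nodes are $((X)_a)_b$ with $X$ a node of $\thT$ and $a,b\in\{0,1\}$ ($a$ ``inner'', $b$ ``outer''); besides the edges $((u)_a)_b$ from the four copies of $\thT$ it has, for each node $X$ and each $b$, an inner edge $(\theta_X)_b\colon((X)_0)_b\to((X)_1)_b$, for each edge $u\colon X\to Y$ and each $b$ an inner edge $(\theta_u)_b$, dually for each $a$ the outer edges $\Theta_{(X)_a}\colon((X)_a)_0\to((X)_a)_1$ and $\Theta_{(u)_a}$, and the ``diagonal'' edges $\Theta_{\theta_X}\colon((X)_0)_0\to((X)_1)_1$ and $\Theta_{\theta_u}$. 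I would define $\rho$ on nodes and edges by swapping the two indices; this forces $\rho((\theta_X)_b)=\Theta_{(X)_b}$, $\rho((\theta_u)_b)=\Theta_{(u)_b}$ (and the inverse assignments), $\rho(\Theta_{\theta_X})=\Theta_{\theta_X}$, $\rho(\Theta_{\theta_u})=\Theta_{\theta_u}$, and identities to identities. One checks immediately that $\rho$ preserves domains and codomains and permutes the terminal, pullback, initial, pushout and list universals (one in copy $(a,b)$ going to its counterpart in copy $(b,a)$, with all dependent universals carried along), hence respects every universal operator; so $\rho$ is ``almost'' a sketch endomorphism.

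The only failure is among commutativities. These are: the commutativities of $\thT$ in each copy; the inner squares $(\theta_X)_b;((u)_1)_b\skdiag(\theta_u)_b$ and $((u)_0)_b;(\theta_Y)_b\skdiag(\theta_u)_b$; the outer squares $\Theta_{(X)_a};((u)_a)_1\skdiag\Theta_{(u)_a}$ and $((u)_a)_0;\Theta_{(Y)_a}\skdiag\Theta_{(u)_a}$; the two squares for $\Theta_{\theta_X}$; and the two squares $\Theta_{(X)_0};(\theta_u)_1\skdiag\Theta_{\theta_u}$, $(\theta_u)_0;\Theta_{(Y)_1}\skdiag\Theta_{\theta_u}$. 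Under $\rho$ the first three families are permuted among themselves and the two $\Theta_{\theta_X}$ squares are interchanged, but the two $\Theta_{\theta_u}$ squares are sent to the triangles $(\theta_X)_0;\Theta_{(u)_1}\skdiag\Theta_{\theta_u}$ and $\Theta_{(u)_0};(\theta_Y)_1\skdiag\Theta_{\theta_u}$, one pair for each edge $u$ of $\thT$, which are not commutativities of $(\thT^{\to})^{\to}$. Each is, however, derivable there: e.g.\ $(\theta_X)_0;\Theta_{(u)_1}\skdiag\Theta_{\theta_u}$ follows from the chain $\Theta_{\theta_u}\skdiag\Theta_{(X)_0};(\theta_u)_1\skdiag\Theta_{(X)_0};(\theta_X)_1;((u)_1)_1\skdiag\Theta_{\theta_X};((u)_1)_1\skdiag(\theta_X)_0;\Theta_{(X)_1};((u)_1)_1\skdiag(\theta_X)_0;\Theta_{(u)_1}$, using one inner square, the two $\Theta_{\theta_X}$ squares and one outer square in the sense of Proposition~\ref{prop:eqExtEqLogic}. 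As $\thT$ is a context there are only finitely many such triangles, so by Proposition~\ref{prop:eqExtEqLogic} there is an equivalence extension $e\colon(\thT^{\to})^{\to}\skeqext\thU$ in which they all hold. Then $\tau\colon(\thT^{\to})^{\to}\skhom\thU$, defined to be $\rho$ on nodes and edges and to send each commutativity to its $\rho$-image (now available in $\thU$), is a genuine sketch homomorphism, and $(e,\tau)$ is a context map $(\thT^{\to})^{\to}\to(\thT^{\to})^{\to}$.

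For the compatibility, the composite $(\Id,i_{\mu})(e,\tau)$ is, after reindexing the trivial equivalence extension $\Id$ along $\tau$, the context map $(e,\tau\circ i_{\mu})$; a direct inspection shows $\tau\circ i_{\mu}$ and $e\circ i_{\mu}^{\to}$ agree on every node and edge of $\thT^{\to}$ (both carry $(X)_a$ to $((X)_{\mu})_a$, the edge $\theta_X$ to $\Theta_{(X)_{\mu}}$, and so on), hence are equal as sketch homomorphisms; therefore $(\Id,i_{\mu})(e,\tau)=(e,e\circ i_{\mu}^{\to})=_{o}(\Id,i_{\mu}^{\to})$, since $(e,e\circ g)=_{o}(\Id,g)$ for any equivalence extension $e$ (take $e$ itself as common refinement). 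For the involution, computing $(e,\tau)(e,\tau)$ by the composition rule gives $(e;\tau(e),\,\varepsilon\circ\tau)$, where $\varepsilon$ is the second leg of the reindexing of $e$ along $\tau$; by the explicit reindexing construction of Proposition~\ref{prop:protoext}, $\varepsilon$ acts on the $(\thT^{\to})^{\to}$-part exactly as $\tau$, i.e.\ as $\rho$, so $\varepsilon\circ\tau$ acts on nodes and edges as $\rho\circ\rho=\mathrm{id}$ and hence agrees on $\skn$ and $\ske$ with, and so equals, the equivalence extension map $e;\tau(e)$. Thus $(e,\tau)(e,\tau)=(e;\tau(e),\,e;\tau(e))$, which is objectively equal to $(\Id,\Id)$ exactly as in the proof of Theorem~\ref{thm:ConUni}.

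The step I expect to be the main obstacle is organising the four-fold-indexed structure of $(\thT^{\to})^{\to}$ cleanly enough to pin down exactly which commutativities the naive index-swap fails to preserve and to exhibit their derivations; a secondary subtlety, in the involution step, is recognising that the second leg $\varepsilon$ of the reindexing itself performs the inverse swap, so that $(e,\tau)^{2}$ is the identity on underlying nodes and edges rather than the ``double transpose''.
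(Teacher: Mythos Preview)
Your proof is correct and follows the same approach as the paper: realise $\tau$ as the ``transpose'' that reflects the commutative square of $\thT$-models about its diagonal, with an equivalence extension $e$ to supply the missing commutativities. Your version is considerably more explicit than the paper's, which simply observes that ``mere commutativity of the squares (of carrier edges) does not explicitly have all the data for a homomorphism between homomorphisms'' and leaves the verification of the involution and the compatibility with $i_\mu$ implicit; you correctly pinpoint that the only commutativities not preserved by the index swap are the two $\Theta_{\theta_u}$ triangles, and your chain deriving them is right.

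One small point of bookkeeping: when you compute the compatibility, you describe the composite $(\Id,i_\mu)(e,\tau)$ as ``reindexing the trivial equivalence extension $\Id$ along $\tau$''. That is actually the recipe for $(e,\tau)(\Id,i_\mu)$, not $(\Id,i_\mu)(e,\tau)$ in diagrammatic order --- and indeed, for the composite to typecheck at all (both sides being maps $(\thT^{\to})^{\to}\to\thT^{\to}$), the involution must come first. The paper's displayed order appears to be a slip; your calculation tracks the intended statement and the result $(e,\tau\circ i_\mu)=(e,e\circ i_\mu^{\to})=_o(\Id,i_\mu^{\to})$ is exactly what is needed.
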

\begin{proof}
  We shall write $i_{\lambda\mu}$ ($\lambda,\mu = 0,1$) for the composite
  \[
    i_{\lambda}i_{\mu}=
    \xymatrix@1{
      {\thT}
        \ar@{->}[r]_{i_{\lambda}}
      & {\thT^{\to}}
        \ar@{->}[r]_{i_{\mu}}
      & {(\thT^{\to})^{\to}}
    }
    \text{.}
  \]
  In $(\thT^{\to})^{\to}$ we write $\theta$ for the first level homomorphism, in $\thT^{\to}$,
  represented in $(\thT^{\to})^{\to}$ by $i_{\mu}(\theta)$,
  and $\phi$ for the second level homomorphism.

  Note that $i_{\lambda}i_{\mu}^{\to} = i_{\mu}i_{\lambda}$.
  It follows that any model of $(\thT^{\to})^{\to}$ has a square of
  four models of $\thT$, got from the $i_{\lambda\mu}$s,
  and four homomorphisms between them, got from the $i_{\mu}$s and the $i_{\mu}^{\to}$s.
  In fact, the square will commute, because $\phi$ is homomorphic with respect to the $i_{\mu}$s.
  Conversely, any such commutative square of homomorphisms gives a model of $(\thT^{\to})^{\to}$.
  \[
    \xymatrix{
      {i_{00}(X)}
        \ar@{->}[r]^{i_0(\theta_X)}
        \ar@{->}[d]_{\phi_{i_0(X)} = i_0^{\to}(\theta_X)}
      & {i_{10}(X)}
        \ar@{->}[d]^{\phi_{i_0(X)} = i_1^{\to}(\theta_X)}
      \\
      {i_{01}(X)}
        \ar@{->}[r]_{i_1(\theta_X)}
      & {i_{11}(X)}
    }
  \]
  Reflecting the square about its leading diagonal gives another such square,
  and that is the essential action of $\tau$.
  The only remaining issue is that in the \emph{context} $(\thT^{\to})^{\to}$,
  we need an equivalence extension to introduce some composites and associativities --
  mere commutativity of the squares (of carrier edges) does not explicitly
  have all the data for a homomorphism between homomorphisms.
\end{proof}

\begin{lemma}\label{lem:homfunctori}
  Let $f_0,f_1\colon\thT_1 \skhom \thT_0$ have an object equality $\gamma$.
  Then $f_0^{\to}$ and $f_1^{\to}$ are objectively equal.
\end{lemma}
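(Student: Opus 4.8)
The plan is to obtain the required object equality between $f_0^{\to}$ and $f_1^{\to}$ by pushing the given object equality $\gamma$ through the functor $(-)^{\to}$ and then correcting the direction with the involution of Lemma~\ref{lem:arrarr}. Recall that $\gamma$ is a sketch homomorphism $\gamma\colon\thT_1^{\to}\skhom\thT_0$ with $i_{\lambda}\gamma=f_{\lambda}$ ($\lambda=0,1$) whose carrier edges $\gamma_X=\gamma(\theta_X)\colon f_0(X)\Rightarrow f_1(X)$ are object equalities in $\thT_0$. Applying the functor $(-)^{\to}$ gives $\gamma^{\to}\colon(\thT_1^{\to})^{\to}\skhom\thT_0^{\to}$; naturality of $i_{\lambda}$ at $\gamma$ gives $i_{\lambda}^{\thT_1^{\to}}\gamma^{\to}=\gamma\, i_{\lambda}^{\thT_0}$, while functoriality together with $f_{\lambda}=i_{\lambda}^{\thT_1}\gamma$ gives $f_{\lambda}^{\to}=(i_{\lambda}^{\thT_1})^{\to}\gamma^{\to}$. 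So $\gamma^{\to}$ is already a $2$-cell, but between $\gamma\, i_0^{\thT_0}$ and $\gamma\, i_1^{\thT_0}$, not between $f_0^{\to}$ and $f_1^{\to}$; the mismatch is exactly the interchange of the outer injections $i_{\mu}^{\thT_1^{\to}}$ with the functorial injections $(i_{\mu}^{\thT_1})^{\to}$.

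First I would invoke Lemma~\ref{lem:arrarr}: the involution $(e,\tau)$ of $(\thT_1^{\to})^{\to}$ carries out precisely this interchange, since $(\Id,i_{\mu})(e,\tau)=(\Id,i_{\mu}^{\to})$. Reindexing the equivalence extension $e$ along $\gamma^{\to}$ produces an equivalence extension $\thT_0^{\to}\skeqext\thU$ together with its reindexing homomorphism, and composing $\tau$ with that homomorphism yields a sketch homomorphism $\Gamma\colon(\thT_1^{\to})^{\to}\skhom\thU$. Since $\Gamma$ is literally a composite of sketch homomorphisms it automatically respects all the sketch operators, so there is nothing to check on edges; and the identity saying that $i_{\mu}^{\thT_1^{\to}}\Gamma$ equals $f_{\mu}^{\to}$ followed by the extension $\thT_0^{\to}\skeqext\thU$ follows by combining the Lemma~\ref{lem:arrarr} identity, the reindexing square, and the two identities displayed above. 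Thus $\Gamma$ is a $2$-cell from $f_0^{\to}$ to $f_1^{\to}$, viewed in the equivalence extension $\thU$.

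It then remains to check that every carrier edge of $\Gamma$ is an object equality. The carrier edges are indexed by the nodes of $\thT_1^{\to}$; tracing through the construction — $\tau$ sends the outer $\theta$-edge over a node $X^{(b)}$ to the inner $\theta$-edge over $X$ in the $b$-th copy (this is the essential action of $\tau$ in Lemma~\ref{lem:arrarr}), and then $\gamma^{\to}$ carries the $b$-th copy of $\thT_1^{\to}$ to the $b$-th copy of $\thT_0$ via $\gamma$ (naturality: $i_b^{\thT_1^{\to}}\gamma^{\to}=\gamma\, i_b^{\thT_0}$) — shows that this carrier edge is the image in $\thU$ of $i_b^{\thT_0}(\gamma_X)$. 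I would finish with the routine observation that object equalities are preserved by sketch homomorphisms: a straightforward induction over the clauses of Definition~\ref{def:objeq} (the case $\gamma\skdiag\sknid(X)$; terminal and initial universals; pullback and pushout universals carrying object-equality data; list universals built from $\qeql(\gamma_A)$), each clause naming only structure that every sketch homomorphism preserves. Hence each $i_b^{\thT_0}(\gamma_X)$ is an object equality in $\thT_0^{\to}$ and so, pushed forward along $\thT_0^{\to}\skeqext\thU$, in $\thU$. Therefore $\Gamma$ witnesses $f_0^{\to}=_{o}f_1^{\to}$.

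The main obstacle I anticipate is the bookkeeping rather than any genuine difficulty: there are two $\to$-constructions in play, hence four families of $\theta$-edges and injections inside $(\thT_1^{\to})^{\to}$, and the only real content is verifying that the equivalence extension produced really is the reindexing of the one supplied by Lemma~\ref{lem:arrarr} (so that no ad hoc extension is introduced), and that the domain and codomain of $\Gamma$ come out as $f_0^{\to}$ and $f_1^{\to}$ after that extension. The object-equality-preservation fact is routine, and the fact that $\Gamma$ is a well-defined homomorphism is automatic because it is a composite.
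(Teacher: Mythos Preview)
Your proposal is correct and follows essentially the same approach as the paper: the paper's entire proof is the one-liner ``Use $(\Id,\gamma^{\to})(e,\tau)$, where $(e,\tau)$ is as in Lemma~\ref{lem:arrarr}'', and what you do is precisely unpack this composite (your $\Gamma$ is the homomorphism part after reindexing $e$ along $\gamma^{\to}$), check its domain and codomain via the Lemma~\ref{lem:arrarr} identity, and then supply the verification that the carrier edges are object equalities, a step the paper leaves implicit. Your added observation that sketch homomorphisms preserve object equalities (by induction over Definition~\ref{def:objeq}) is the right way to close that gap.
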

\begin{proof}
  Use $(\Id,\gamma^{\to})(e,\tau)$, where $(e,\tau)$ is as in Lemma~\ref{lem:arrarr}.
\end{proof}

\begin{lemma}\label{lem:homfunctorii}
  Let $e \colon \thT_1 \skeqext \thT_0$ be an equivalence extension.
  Then $e^{\to}$ is invertible in $\Con$.
\end{lemma}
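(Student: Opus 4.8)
The plan is to show that $e^{\to}$ is itself an equivalence extension $\thT_1^{\to}\skeqext\thT_0^{\to}$; the conclusion is then immediate, since by Theorem~\ref{thm:ConUni} every equivalence extension becomes invertible in $\Con$ (concretely, $(\Id,e^{\to})$ then has inverse $(e^{\to},\Id)$). Because $\thT\mapsto\thT^{\to}$ is functorial, if $e=e_n\circ\cdots\circ e_1$ with each $e_k$ a simple equivalence extension, then $e^{\to}=e_n^{\to}\circ\cdots\circ e_1^{\to}$, and a composite of equivalence extensions is an equivalence extension; so it suffices to treat the case where $e\colon\thT_1\skeqext\thT_0$ is a single simple equivalence extension, $\thT_0=\thT_1+\delta$ for one of the rules of Definition~\ref{def:equivExt}.

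For such an $e$, the hom-sketch $\thT_0^{\to}$ is obtained from $\thT_1^{\to}$ by adjoining: the two copies $i_0(\delta),i_1(\delta)$ of the fresh ingredients; for each fresh node $N$ of $\delta$ an edge $\theta_N\colon N_0\to N_1$; for each fresh edge $w$ of $\delta$ an edge $\theta_w$; and the hom-sketch commutativities attached to these. I would realize this as a composite of simple equivalence extensions in three phases. First, adjoin $i_0(\delta)$ and then $i_1(\delta)$: each is an equivalence extension, being the reindexing of $e$ along $i_0$ (resp.\ $i_1$), so it is again produced by the very rule that produced $e$, now on the $i_0$- (resp.\ $i_1$-) image of the data of $e$; for the purely equational rules (unit, associativity, uniqueness of fillins) $\delta$ lies wholly in $\sktri$ and this already finishes. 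Second, install the $\theta$-edges: a fresh node $N$ arising from a universal receives $\theta_N$ as the canonical fillin from the copy-$0$ universal to the copy-$1$ universal (a recursion morphism, for list objects), adjoined by the corresponding fillin rule; a fresh edge $w$ (a primitive composite, a fillin, or an inverse) receives $\theta_w$, which can always be written as a composite of $\theta$-edges already present in $\thT_1^{\to}$ (those attached to the data of $e$) with the copies $w_0,w_1$ — for instance $\theta_w=\theta_u\circ v_1$ when $w=v\circ u$, and $\theta_w=w_0\circ\theta_Y$ when $w\colon X\to Y$ is a fillin or an inverse — so it is adjoined by the composition rule. Third, supply the remaining hom-sketch commutativities: adjoining $\theta_w$ as a composite yields one of its two square commutativities outright, and the other, the naturality of $\theta$ along $w$, is derived in a further equivalence extension — by the associativity rules together with, in the fillin cases, the relevant uniqueness-of-fillin rule (one checks that $\theta_X\circ w_1$ and $w_0\circ\theta_Y$ fill in for a common cone, using Proposition~\ref{prop:eqExtEqLogic}), and, in the inversion cases, by the same cancellation computation used in the proof of Lemma~\ref{lem:eqExtTwoCell}. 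The edge $\theta_{\sknid(N)}$ coincides with $\theta_N$ up to a unary commutativity via the unit laws.

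I expect the list rules to be the only genuinely laborious case. There $\delta$ also carries the auxiliary terminal and product nodes and, in the fillin case, the recursion morphism, so the doubled picture needs a correspondingly long chain: $\theta_L$ must be introduced as $\qeql(\theta_A)$ via the list-fillin rule — after first adjoining the products $A_0\times L_1$, etc., by pullback rules — rather than as a bare composite, and the $\theta$-edges of the auxiliary nodes likewise. Conceptually nothing new occurs; every rule follows the same pattern of two copies, then $\theta$-edges as composites or fillins, then naturality by uniqueness. Once $e^{\to}$ has been exhibited as such a composite of simple equivalence extensions, Theorem~\ref{thm:ConUni} yields the invertibility.
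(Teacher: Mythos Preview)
Your strategy---show that $e^{\to}$ is itself an equivalence extension---does not quite go through, and the paper explicitly flags the obstruction elsewhere: in the proof of Lemma~\ref{lem:extArrow} it remarks that for an extension $c$, the homomorphism $c^{\to}$ is \emph{not} an extension, ``but only for bureaucratic reasons based on the concrete definition of coproduct''. The same bureaucracy blocks your argument. Concretely, your Phase~3 derivations of the second naturality commutativity for each fresh edge $w$ require several applications of the associativity (or uniqueness-of-fillin) rules, and each such application adjoins its own commutativity. Those intermediate commutativities are \emph{not} present in $\thT_0^{\to}$. So the composite you build is an equivalence extension $\thT_1^{\to}\skeqext\thU$ with $\thU$ strictly larger than $\thT_0^{\to}$; it is not $e^{\to}$ itself. (For the composition rule, for instance, reaching $w_0\theta_Z\skdiag\theta_w$ from $\theta_X w_1\skdiag\theta_w$ needs three associativity steps, hence two spurious commutativities.)

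The paper sidesteps this entirely: it never claims $e^{\to}$ is an equivalence extension. Instead it takes the identity 2-cell on $\thT_1^{\to}$, right-whiskers it by $(e,\Id)$ via Definition~\ref{def:rtWhisker} and Proposition~\ref{prop:rtWhisk}, and obtains a context map $(e',\alpha)\colon\thT_1^{\to}\to\thT_0^{\to}$ which it then checks is a two-sided inverse to $(\Id,e^{\to})$ in $\Con$. Your construction is in fact very close to this---your $\thU$ and the inclusion $\thT_0^{\to}\hookrightarrow\thU$ give exactly such a pair $(e',\alpha)$---but you would still need to verify the two composite identities in $\Con$ (up to objective equality), which is the substance of the paper's proof. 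The case analysis you carry out is essentially the content of Lemma~\ref{lem:eqExtTwoCell}, which the paper has already packaged and reuses here.
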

\begin{proof}
  The identity on $\thT_1^{\to}$ gives the generic 2-cell between $\thT_1^{\to}$ and $\thT_1$,
  its domain and codomain being $i_0$ and $i_1$.
  Consider its right whiskering (Definition~\ref{def:rtWhisker}) by $(e,\Id)$, giving
  \[
    \xymatrix@1{
      {\thT_1^{\to}} \ar@{->}[r]^{e'}_{\skeqext}
      & {\thU}
      & {\thT_0^{\to}} \ar@{->}[l]_{\alpha}
    }
    \text{.}
  \]
  Then $(e',\alpha)$ is the inverse of $(\Id,e^{\to})$.

  First, $(e',\alpha)(\Id,e^{\to}) = (e', e^{\to}\alpha) = (e',e')$.

  Next, for $(\Id,e^{\to})(e',\alpha)$ consider
  \[
    \xymatrix{
      {}
      & {\thU}
        \ar@{->}[l]_{\varepsilon}
      & {\thT_0^{\to}}
        \ar@{->}[l]_{\alpha}
      \\
      {\thT_0^{\to}}
        \ar@{->}[u]^{e^{\to}(e')}
      & {\thT_1^{\to}}
        \ar@{->}[l]^{e^{\to}}
        \ar@{->}[u]^{e'}
      & {\thT_1^{\to}}
        \ar@{=}[l]{}
        \ar@{->}[u]_{e^{\to}}
    }
  \]
  $(e^{\to}(e'),\alpha \varepsilon)$ is a right whiskering of $(\Id,e^{\to})$ by $(e,\Id)$;
  but then so is $(e^{\to}(e'),e^{\to}(e'))$, and so they are objectively equal,
  and the latter is objectively equal to the identity on $\thT_0^{\to}$.
\end{proof}

\begin{theorem}\label{thm:homfunctor}
  The functor $-^{\to}$ on $\Con_{\skhom}$ gives an endofunctor on $\Con$.
\end{theorem}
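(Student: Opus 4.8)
The strategy is to build $-^{\to}$ on $\Con$ by invoking the universal property of $\Con$ recorded in Theorem~\ref{thm:ConUni}: $\Con$ is the free category on $\Con_{\skhommap}$ in which object equalities become equalities and equivalence extensions become invertible, with universal functor $(\Id,-)\colon\Con_{\skhommap}\to\Con$. Thus it is enough to produce a functor $F\colon\Con_{\skhommap}\to\Con$ having those two properties; it then factors uniquely through $(\Id,-)$ as a functor $F'\colon\Con\to\Con$, which will be the claimed endofunctor.

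For $F$ I would take the composite $\Con_{\skhommap}\xrightarrow{\;-^{\to}\;}\Con_{\skhommap}\xrightarrow{\;(\Id,-)\;}\Con$, using the hom-sketch endofunctor of $\Con_{\skhom}$ (Section~\ref{sec:SkExx}; it restricts to contexts by the example in Section~\ref{sec:ConExx}). Explicitly, $F(\thT)=\thT^{\to}$ and $F$ sends a context homomorphism $f\colon\thT_1\skhom\thT_0$ to the context map $(\Id,f^{\to})$ of Definition~\ref{def:contMorph}; functoriality of $F$ follows at once from functoriality of $-^{\to}$ and of $(\Id,-)$. The two required properties are exactly the two preceding lemmas. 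If $f_0,f_1\colon\thT_1\skhom\thT_0$ carry an object equality, then $f_0^{\to}$ and $f_1^{\to}$ are objectively equal by Lemma~\ref{lem:homfunctori}; reading the witnessing equivalence extension of $\thT_0^{\to}$ as a common refinement of the two trivial extensions, this is precisely that the context maps $(\Id,f_0^{\to})$ and $(\Id,f_1^{\to})$ are objectively equal in the sense of Definition~\ref{def:ConTwoCell}, hence equal in $\Con$, so $F(f_0)=F(f_1)$. If $e$ is an equivalence extension, then $F(e)=(\Id,e^{\to})$ is invertible in $\Con$ by Lemma~\ref{lem:homfunctorii} --- note that $e^{\to}$ need not itself be an equivalence extension, but invertibility in $\Con$ is all the universal property demands.

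Theorem~\ref{thm:ConUni} then hands us the endofunctor $F'\colon\Con\to\Con$, and by the factorization $F'\circ(\Id,-)=F=(\Id,-)\circ(-^{\to})$, so $F'$ genuinely extends the hom-sketch functor: $\thT\mapsto\thT^{\to}$ on objects and $(\Id,f)\mapsto(\Id,f^{\to})$ on the maps coming from homomorphisms. I do not expect a real obstacle: all the substance is already carried by Lemmas~\ref{lem:arrarr}, \ref{lem:homfunctori} and~\ref{lem:homfunctorii} (the first feeding the second through the reflection involution on $(\thT^{\to})^{\to}$). The only point worth writing out with care is the bookkeeping used in the object-equality step --- reconciling ``objectively equal homomorphisms'' (Definition~\ref{def:objEqGeneral}) with ``objectively equal context maps'' (Definition~\ref{def:ConTwoCell}) in the special case where the equivalence-extension legs are identities.
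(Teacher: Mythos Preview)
Your proposal is correct and follows exactly the paper's approach: invoke the universal property of Theorem~\ref{thm:ConUni} and discharge its two hypotheses by Lemmas~\ref{lem:homfunctori} and~\ref{lem:homfunctorii}. The paper's proof is the single sentence ``Theorem~\ref{thm:ConUni} reduces this to Lemmas~\ref{lem:homfunctori} and~\ref{lem:homfunctorii}''; you have simply spelled out the reduction in detail, including the bookkeeping that reconciles the two notions of objective equality, which is straightforward as you note.
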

\begin{proof}
  Theorem~\ref{thm:ConUni} reduces this to Lemmas~\ref{lem:homfunctori} and~\ref{lem:homfunctorii}.
\end{proof}

We now define an internal category in the functor category $[\Con,\Con]$ in which
the object of objects is $\Id$, and the object of morphisms is $-^{\to}$.

The structure operations will be natural transformations.
Note that to prove naturality, it suffices to prove it with respect to maps of
the form $(\Id,f)$,
since the rest follows from invertibility of $(e,\Id)$.

The domain and codomain, natural transformations from $-^{\to}$ to $\Id$,
are given by the maps $\dom = (\Id,i_0)$ and $\cod = (\Id,i_1)$.

The identity $\Id\colon\Id\to -^{\to}$ is given by maps $(e,\gamma)$
where $\gamma\colon\thT^{\to}\skhom\thT'$ takes $\theta$ to the equality homomorphism
on the generic model of $\thT$.
The equivalence extension $e\colon\thT\skeqext\thT'$ uses instances of the unit laws
to provide the necessary commutativities.

Since $i_0$ is an extension, we can reindex along $i_1$, and in fact this gives $\thT^{\to\to}$
as a pullback in $\Con$.
\[
  \xymatrix{
    {\thT^{\to\to}}
      \ar@{->}[r]^{(\Id,\varepsilon)}
      \ar@{->}[d]_{(\Id,i_1(i_0)}
    & {\thT^{\to}}
      \ar@{->}[d]^{\dom = (\Id,i_0)}
    \\
     {\thT^{\to}}
      \ar@{->}[r]_{\cod = (\Id,i_1)}
    & {\thT}
  }
\]
$i_1(i_0)$ maps the ingredients of $\thT^{\to}$ to the 0- and 1-copies in $\thT^{\to\to}$,
and adjoins the 2-copies with the carriers from 1 to 2.

In an equivalence extension of $\thT^{\to\to}$, the two model homomorphisms can be composed,
and this provides composition as a natural transformation from $-^{\to\to}$ to $-^{\to}$.
It is vertical composition of the two 2-cells $\thT^{\to}\skhom\thT^{\to\to}$.

Thus for each $\thT$ we get an internal category $N(\thT)$ in $\Con$,
on objects $\thT$ and morphisms $\thT^{\to}$.

Using the category structure of $N(\thT_1)$, this makes $\Con(\thT_0,\thT_1)$
into a category, with objects and morphisms the 1-cells and 2-cells between
$\thT_0$ and $\thT_1$.

We already have vertical composition of 2-cells.
(We shall compose from top to bottom, so the codomain of the upper 2-cell
must equal the domain of the lower.)

We deal with horizontal composition by whiskering.
Using the functor $-^{\to}$, we can make $\Con(-,-^{\to})$ into
a profunctor from $\Con$ to $\Con$, and this provides whiskering on both sides.
The proof of Lemma~\ref{lem:homfunctorii} shows that this agrees with the whiskering we
already have.

Horizontal composition can now be defined as
\[
  \alpha\beta
    = \frac{\alpha\dom(\beta)}{\cod(\alpha)\beta}
  \text{.}
\]
The interchange law follows from --
\begin{lemma}
  \[
    \frac{\alpha\dom(\beta)}{\cod(\alpha)\beta}
      = \frac{\dom(\alpha)\beta}{\alpha\cod(\beta)}
    \text{.}
  \]
\end{lemma}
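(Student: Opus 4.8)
The plan is to exhibit both sides of the identity as the two edge-paths round a single square of $1$- and $2$-cells that commutes automatically, because it is encoded by one context map into $(\thT_2^{\to})^{\to}$. First I would dispose of the bookkeeping: unwinding Definition~\ref{def:rtWhisker} together with the definitions of $\dom$ and $\cod$ on whiskerings and on vertical composites, one checks that $\frac{\alpha\dom(\beta)}{\cod(\alpha)\beta}$ and $\frac{\dom(\alpha)\beta}{\alpha\cod(\beta)}$ are both $2$-cells from $\thT_0$ to $\thT_2$, each with domain the horizontal composite $\dom(\alpha)\dom(\beta)$ of $1$-cells and codomain $\cod(\alpha)\cod(\beta)$, so it remains only to identify them.

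Write $f_{\lambda}$ ($\lambda=0,1$) for the domain and codomain $1$-cells of $\alpha$, and $g_{\mu}$ for those of $\beta$; regard $\alpha$ as a context map $\thT_0\to\thT_1^{\to}$ and $\beta$ as a context map $\thT_1\to\thT_2^{\to}$, and set
\[
  \Phi \;=\; \alpha;\beta^{\to}\colon\thT_0\longrightarrow(\thT_2^{\to})^{\to}\text{,}
\]
applying the endofunctor $-^{\to}$ of Theorem~\ref{thm:homfunctor} to $\beta$. I would then show that $\Phi$ encodes the square
\[
  \xymatrix{
    {f_0 g_0}
      \ar@{->}[r]^{\alpha\dom(\beta)}
      \ar@{->}[d]_{\dom(\alpha)\beta}
    & {f_1 g_0}
      \ar@{->}[d]^{\cod(\alpha)\beta}
    \\
    {f_0 g_1}
      \ar@{->}[r]_{\alpha\cod(\beta)}
    & {f_1 g_1}
  }
\]
For the vertical edges, naturality of $\dom,\cod\colon-^{\to}\to\Id$ with respect to $\beta$ gives $\Phi;\dom=\alpha;\dom;\beta=\dom(\alpha)\beta$ and $\Phi;\cod=\cod(\alpha)\beta$. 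For the horizontal edges, I would compose $\Phi$ with the transposition involution $(e,\tau)$ of Lemma~\ref{lem:arrarr}, which interchanges the two arrow-structures of $(\thT_2^{\to})^{\to}$; then, using functoriality of $-^{\to}$ to rewrite $\beta^{\to}$ composed with the inner $\dom$ and $\cod$ as $(\dom(\beta))^{\to}$ and $(\cod(\beta))^{\to}$, together with the fact (from the proof of Lemma~\ref{lem:homfunctorii}) that right whiskering of a $2$-cell by a $1$-cell $g$ is postcomposition with $g^{\to}$, one gets $\Phi;\tau;\dom=\alpha;g_0^{\to}=\alpha\dom(\beta)$ and $\Phi;\tau;\cod=\alpha\cod(\beta)$.

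Finally, as noted in the proof of Lemma~\ref{lem:arrarr}, any model of $(\thT_2^{\to})^{\to}$ carries a \emph{commuting} square of $\thT_2$-model homomorphisms; equivalently, in a suitable equivalence extension of $(\thT_2^{\to})^{\to}$ the two diagonal composites agree. Reindexing $\Phi$ along the two extensions that pick out, respectively, the composable pair (top; then right) and the composable pair (left; then bottom), landing in the pullback $\thT_2^{\to\to}$, and applying the composition natural transformation $\comp\colon-^{\to\to}\to-^{\to}$, yields two context maps $\thT_0\to\thT_2^{\to}$ which coincide in $\Con$ (passing to a common refinement absorbs the objective-equality slack, using Proposition~\ref{prop:objEqER}). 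By construction these two maps are exactly $\frac{\alpha\dom(\beta)}{\cod(\alpha)\beta}$ and $\frac{\dom(\alpha)\beta}{\alpha\cod(\beta)}$, which are therefore equal.

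The hard part will be the middle step: verifying rigorously that $\Phi=\alpha;\beta^{\to}$ really does encode that square with those edges. This requires tracking how $-^{\to}$ acts on the $1$-cell $\beta$ — which is itself a $2$-cell, of $\thT_2^{\to}$-shape — reconciling the outer arrow-structure of $(\thT_2^{\to})^{\to}$ coming from the functor with the inner one inherited from $\thT_2^{\to}$, and carrying along the equivalence extensions and objective equalities introduced by reindexing, by Definition~\ref{def:rtWhisker}, and by Lemma~\ref{lem:arrarr}, so that all the displayed identifications hold as equalities of context maps in $\Con$. Once that is in place the commutativity built into $(\thT_2^{\to})^{\to}$ finishes the argument, and the interchange law for horizontal composition follows.
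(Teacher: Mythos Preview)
Your approach is correct and is essentially the paper's own argument: form the composite $\alpha;\beta^{\to}$ into $(\thT_2^{\to})^{\to}$ and read off the two vertical composites as the two routes round the commuting square supplied by Lemma~\ref{lem:arrarr}. The paper's proof is terser in one respect: it first whiskers $(e,\alpha)(e',\Id)$ so as to assume both equivalence extensions are trivial, working thereafter in $\Con_{\skhom}$ with a genuine sketch homomorphism $\alpha\circ\beta^{\to}\colon(\thT_2^{\to})^{\to}\skhom\thT_0$, and then simply asserts that the two sides of the lemma are the images of the two routes round the square. Your version keeps the equivalence extensions in play and compensates by spelling out the edge identifications via $\tau$ and naturality of $\dom,\cod$; this is more work but amounts to the same thing, and your acknowledged ``hard part'' is exactly the step the paper leaves implicit.
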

\begin{proof}
  Suppose we have the following.
  \[
    \xymatrix{
      {\thT_0}
        \xtwocell[rr]{}^{}_{}{\quad(e,\alpha)}
      && {\thT_1}
        \xtwocell[rr]{}^{}_{}{\quad(e',\beta)}
      && {\thT_2}\text{.}
    }
  \]

  By whiskering $(e,\alpha)(e',\Id)$ we might as well assume that $e$ and $e'$
  are both identities.
  In $\Con_{\skhom}$ we now have
  \[
    \alpha\circ\beta^{\to} \colon
    \xymatrix@1{
      {\thT_0}
      & {\thT_1^{\to}}
        \ar@{->}[l]_{\alpha}
      & {(\thT_2^{\to})^{\to}}
        \ar@{->}[l]_{\beta^{\to}}
    }
  \]
  The two vertical composites in the statement are the images in $\thT_0$
  of the two routes round the square of homomorphisms in $(\thT_2^{\to})^{\to}$
  (see Lemma~\ref{lem:arrarr}) and so are equal.
\end{proof}

Putting together the properties proved so far,
we can deduce --

\begin{theorem}\label{thm:ConTwoCat}
  $\Con$ is a 2-category.
\end{theorem}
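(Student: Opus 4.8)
The plan is to assemble the 2-categorical structure out of the pieces already built in this section and then to check the handful of remaining coherence axioms. First I would recall that, for contexts $\thT_0$ and $\thT_1$, the hom-category $\Con(\thT_0,\thT_1)$ has already been obtained from the internal category $N(\thT_1)$ in $\Con$: its objects are the 1-cells (context maps modulo objective equality), its morphisms are the 2-cells, and vertical composition, together with its associativity and unit laws, is inherited directly from the category structure of $N(\thT_1)$. That structure in turn rests on Theorem~\ref{thm:homfunctor} (so that $-^{\to}$ is a genuine endofunctor of $\Con$), on the natural transformations $\dom$, $\cod$ and the identity $\Id\to -^{\to}$, and on the pullback description of $\thT^{\to\to}$ in $\Con$ supplied by Lemma~\ref{lem:ConpbOneCat}, which is what lets us form composition as a natural transformation $-^{\to\to}\to -^{\to}$. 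So the ``hom is a category'' axioms require nothing further.

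Next I would verify that horizontal composition, defined by $\alpha\beta=\frac{\alpha\dom(\beta)}{\cod(\alpha)\beta}$ via whiskerings, is a functor $\Con(\thT_1,\thT_2)\times\Con(\thT_0,\thT_1)\to\Con(\thT_0,\thT_2)$. The whiskerings involved exist and are well defined on objective-equality classes by the whiskering results (Propositions~\ref{prop:rtWhisk} and~\ref{prop:whisker}), so $\alpha\beta$ is well defined. On objects, i.e.\ on 1-cells, horizontal composition reduces to composition of context maps in $\Con$, since whiskering an identity 2-cell by a 1-cell yields an identity 2-cell and the vertical composite of identities is an identity; thus it agrees with the 1-category structure of Proposition~\ref{prop:AUmapscat}, and the same observation gives preservation of vertical identities. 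Compatibility of horizontal with vertical composition -- the only substantial point -- is exactly the interchange identity established in the lemma immediately preceding the theorem, combined with the associativity of whiskering from Proposition~\ref{prop:whisker}(1).

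It then remains to check associativity and unitality of horizontal composition: on 1-cells these are associativity and unitality of composition in $\Con$, and on 2-cells associativity unwinds, via the defining formula for $\alpha\beta$, to associativity of whiskering (Proposition~\ref{prop:whisker}(1)) together with associativity of vertical composition, while the two-sided unit is the identity natural transformation $\Id\to -^{\to}$ coming from $N(\thT)$, whiskering by which is trivial. With the interchange law already in hand, all the 2-category axioms hold, so $\Con$ is a 2-category. I expect the main obstacle to be the bookkeeping around objective equality: each of these equations holds only up to objective equality before one passes to $\Con$, so one must repeatedly invoke that whiskering and composition of context maps respect objective equality and that common refinements of equivalence extensions always exist, ensuring that the arbitrary choices of equivalence extension entering the whiskerings and composites do not affect the resulting arrows of $\Con$. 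Once one trusts that every operation descends cleanly to objective-equality classes, the theorem is a routine assembly of the results already proved.
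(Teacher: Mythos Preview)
Your proposal is correct and follows the same route as the paper, which simply says ``Putting together the properties proved so far, we can deduce'' the result. You have spelled out in detail which ingredients are being assembled (the internal category $N(\thT)$, whiskering via Propositions~\ref{prop:rtWhisk} and~\ref{prop:whisker}, the interchange lemma, etc.), which is exactly the intended argument.
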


\subsection{Limits in $\Con$}\label{sec:ConLimits}
We have two main results here.
The first (Theorem~\ref{thm:PIE}) is that $\Con$ has finite PIE-limits
(products, inserters, equifiers~\cite{PowerRob:PIE}).

This is a large class of finite weighted limits, but a notable lack is equalizers and pullbacks.
Although by universal algebra $\AU_s$ has all pushouts and $\AU_s^{op}$ has all pullbacks,
in general we cannot replicate this in contexts.
For example, suppose we have two context homomorphisms $f_i\colon\thT_0\skhom\thT_i$
where $\thT_0$ has just a single node, and the $f_i$s map it to nodes introduced by two different
kinds of universals.
Then the pushout must specify an equality between those two different nodes,
and that cannot be done with a context.

The second main result (Theorem~\ref{thm:Conpb}) is that, nonetheless,
pullbacks of extension maps do exist, essentially by reindexing.
In fact this has already been addressed in Lemma~\ref{lem:ConpbOneCat}.
All that remains here is to show that they are 2-categorical conical limits
(in other words, they take proper account of 2-cells between fillins).

Note that all our weighted limits are \emph{strict}, with strict cones, as in~\cite{PowerRob:PIE}.
We do not follow the convention in~\cite[p.244]{Elephant1} of interpreting them in a ``pseudo'' sense.

Also note that we do not claim to have constructed the limits in a canonical way,
at least not those -- such as pullbacks, inserters and equifiers -- that depend on maps.
This is because the construction will depend on the representatives $(e,f)$ of the maps.

\subsubsection*{Pullbacks and products}

\begin{lemma}\label{lem:extArrow}
  Consider a context reindexing square~\eqref{eq:reindex}.
  Then the following square becomes a pullback in $\Con$.
  \begin{equation}\label{eq:extArr}
    \xymatrix{
      {\thT'^{\to}_0}
      & {\thT'^{\to}_1}
        \ar@{->}[l]_{\varepsilon^{\to}}
      \\
      {\thT_0^{\to}}
        \ar@{->}[u]^{f(c)^{\to}}
      & {\thT_1^{\to}}
        \ar@{->}[u]_{c^{\to}}
        \ar@{->}[l]^{f^{\to}}
    }
  \end{equation}
\end{lemma}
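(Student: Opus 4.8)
The plan is to recognise the square~\eqref{eq:extArr} as itself a \emph{reindexing square for a context extension}, and then to invoke Lemma~\ref{lem:ConpbOneCat}(1). For this I would establish two facts. (a) If $c\colon\thT_1\skext\thT'_1$ is an extension, then its image $c^{\to}\colon\thT_1^{\to}\skext\thT'^{\to}_1$ under the functor $-^{\to}$ (Section~\ref{sec:SkExx}, Example~4; Theorem~\ref{thm:homfunctor}) is again an extension of sketches. (b) Applying $-^{\to}$ to the reindexing pushout of sketches~\eqref{eq:reindex} again yields a pushout of sketches; equivalently, $\varepsilon^{\to}$ together with $f(c)^{\to}$ exhibits $\thT'^{\to}_0$ as the reindexing of $c^{\to}$ along $f^{\to}$. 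Granting (a) and (b), Lemma~\ref{lem:ConpbOneCat}(1) applies verbatim to~\eqref{eq:extArr} and the statement follows.

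For (a) it suffices, by induction on the number of simple-extension steps making up $c$, to treat a single simple extension and then to go through the cases of Definition~\ref{def:extn}. Adjoining a fresh primitive node $X$ is sent by $-^{\to}$ to adjoining the two copies $X_0,X_1$, their identities, and the new edge $\theta_X\colon X_0\to X_1$ --- i.e.\ to two primitive-node steps followed by one primitive-edge step. Adjoining a fresh primitive edge $u\colon X\to Y$ is sent to adjoining $u_0$, $u_1$, $\theta_u\colon X_0\to Y_1$ and the commutativities $u_0\theta_Y\skdiag\theta_u$ and $\theta_Xu_1\skdiag\theta_u$, i.e.\ to three primitive-edge steps and two commutativity steps. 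Adjoining a commutativity is sent to adjoining its two copies. Adjoining a terminal, initial, pullback, pushout or list universal is sent to adjoining the two copies of that universal --- each formed on the copy-$0$, resp.\ copy-$1$, data, which already lies in $\thT_1^{\to}$ --- then the primitive-edge steps adjoining the $\theta$-edges attached to the fresh nodes and edges of that universal, then the commutativity steps witnessing the triangles $-^{\to}$ demands of those $\theta$-edges. In each case $c^{\to}$ has been presented as a finite composite of simple extensions, the two universal-copy steps coming first so that the data of each universal precedes its introduction.

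For (b), the reindexing~\eqref{eq:reindex} presents $\thT'_0$ and $\thT'_1$ as $\thT_0+\delta$ and $\thT_1+\delta$ for a common ``delta'' $\delta$ (that of the extension $c$) glued, respectively, to $\thT_0$ and $\thT_1$ along the extension data --- which lies in $\thT_1$ and is carried to $\thT_0$ by $f$. The case analysis above shows that $-^{\to}$ sends this to $\thT'^{\to}_0=\thT_0^{\to}+\delta^{\to}$ and $\thT'^{\to}_1=\thT_1^{\to}+\delta^{\to}$, where $\delta^{\to}$ (the fresh nodes, edges and commutativities enumerated above) again depends only on the $-^{\to}$-image of the data; crucially, the $\theta$-edges attached to the fresh nodes of $c$ are adjoined as \emph{primitive} edges of $c^{\to}$, so nothing forces them back into $\thT_1^{\to}$. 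Hence $\thT'^{\to}_0$, with $\varepsilon^{\to}$ and $f(c)^{\to}$, is exactly the sketch pushout computing the reindexing of $c^{\to}$ along $f^{\to}$, so~\eqref{eq:extArr} is a context reindexing square and Lemma~\ref{lem:ConpbOneCat}(1) gives that it is a pullback in $\Con$.

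The main obstacle is the bookkeeping for the universal-adjoining cases, the list universal being the most intricate because of its nested terminal and pullback universals. One must verify precisely which $\theta$-edges and triangles $-^{\to}$ produces for a sketch carrying such a universal --- including those attached to $\theta_{\sknid(-)}$ of the fresh nodes, which reduce to the unit-law commutativities --- and that they can be assembled, in a legal order, out of simple extensions. Everything else is formal: $-^{\to}$ is a functor on $\mathbf{Sk}_{\skhom}$, so it carries the pushout cocone of~\eqref{eq:reindex} to a cocone over the image cospan, and by (b) that cocone is the canonical reindexing cocone, whence~\eqref{eq:extArr} is literally of the form treated by Lemma~\ref{lem:ConpbOneCat}(1).
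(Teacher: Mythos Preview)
Your overall strategy --- reduce to Lemma~\ref{lem:ConpbOneCat}(1) by recognising~\eqref{eq:extArr} as a reindexing square for an extension --- is exactly the paper's. But your claim (a) is not literally true, and the paper is explicit about this: ``If $c^{\to}$ were an extension, then we could apply Lemma~\ref{lem:ConpbOneCat}. In fact it is not, but only for bureaucratic reasons based on the concrete definition of coproduct `+'.''

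The issue is ordering. By the conventions of Section~\ref{sec:ConExx}, $\thT'^{\to}_1$ is built as a context by first adjoining all of copy~$0$ of $\thT'_1$ (that is, copy~$0$ of $\thT_1$ \emph{followed by} copy~$0$ of the delta), then all of copy~$1$, then the $\theta$-edges. So the steps for $\thT_1^{\to}$ are not an initial segment of those for $\thT'^{\to}_1$, and in the concrete coproduct setup of Section~\ref{sec:ConcreteAUT} the homomorphism $c^{\to}$ fails to be a composite of simple extensions as defined.

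What your case analysis actually constructs is a \emph{reordered} extension $c'\colon\thT_1^{\to}\skext\thT''_1$ with $\thT''_1$ isomorphic (not equal) to $\thT'^{\to}_1$. The paper makes this isomorphism explicit, observes that the reordering can be reindexed along $f^{\to}$ to give a reindexing square isomorphic to~\eqref{eq:extArr}, applies Lemma~\ref{lem:ConpbOneCat}(1) to the reindexing square, and concludes that~\eqref{eq:extArr} is a pullback since it is isomorphic to one. Your argument is essentially complete once you insert this step: acknowledge that your per-step decomposition yields a sketch only isomorphic to $\thT'^{\to}_1$, and that this suffices because pullback squares are stable under isomorphism.
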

\begin{proof}
  If $c^{\to}$ were an extension, then we could apply Lemma~\ref{lem:ConpbOneCat}.
  In fact it is not, but only for bureaucratic reasons
  based on the concrete definition of coproduct ``+'' (see Section~\ref{sec:ConcreteAUT}).
  The issue is that the steps constructing $\thT'^{\to}_1$ are applied in an order
  that does not start off with all those for $\thT_1^{\to}$.
  Those steps can be reordered to give an extension $c'\colon\thT_1^{\to}\skext\thT''_1$
  isomorphic to $c^{\to}$,
  and moreover that reordering can be reindexed along $f^{\to}$ to get a reindexing square
  isomorphic to~\eqref{eq:extArr}:
  \[
    \xymatrix{
      {\thT'^{\to}_0}
      &&& {\thT'^{\to}_1}
        \ar@{->}[lll]_{\varepsilon^{\to}}
      \\
      & {\thT''_0}
        \ar@{->}[ul]^{\cong}
      & {\thT''_1}
        \ar@{->}[l]_{\varepsilon'}
        \ar@{->}[ur]_{\cong}
      \\
      {\thT_0^{\to}}
        \ar@{->}[uu]^{f(c)^{\to}}
        \ar@{->}[ur]_{f^{\to}(c')}
      &&& {\thT_1^{\to}}
        \ar@{->}[uu]_{c^{\to}}
        \ar@{->}[ul]^{c'}
        \ar@{->}[lll]^{f^{\to}}
    }
  \]
  By Lemma~\ref{lem:ConpbOneCat} the reindexing square is a pullback in $\Con$,
  and it follows that so too is~\eqref{eq:extArr}.
\end{proof}

\begin{theorem}\label{thm:Conpb}
  $\Con$ has pullbacks of extension maps along any map.
\end{theorem}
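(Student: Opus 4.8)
The plan is to bootstrap from Lemma~\ref{lem:ConpbOneCat} and Lemma~\ref{lem:extArrow}: the former already supplies the pullback at the level of the underlying $1$-category of $\Con$, so the task is only to verify the strict $2$-dimensional universal property. First I would reduce to the generic case. By Theorem~\ref{thm:ConUni} an arbitrary map $g\colon\thU\to\thT_1$ factors as $g=(e,\Id);(\Id,f)$ with $e$ an equivalence extension, and $(e,\Id)$ is invertible in $\Con$; since pullback along an invertible $1$-cell is the trivial renaming and is automatically a strict $2$-pullback, the pasting law for pullbacks reduces us to pulling an extension map $(\Id,c)$ back along a map of the form $(\Id,f)$. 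Here the candidate cone is the reindexing square~\eqref{eq:reindex} with projections $(\Id,f(c))$ and $(\Id,\varepsilon)$, and by Lemma~\ref{lem:ConpbOneCat}(1) this is a pullback in the underlying $1$-category. So, fixing a context $\thU$, the comparison functor
\[
  P_{\thU}\colon \Con(\thU,\thT'_0)\longrightarrow
    \Con(\thU,\thT_0)\times_{\Con(\thU,\thT_1)}\Con(\thU,\thT'_1)
\]
into the strict pullback of categories is already a bijection on objects; what remains is to show it is also a bijection on morphisms and that it respects identities and vertical composition.

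For the morphisms I would use Lemma~\ref{lem:extArrow}. A $2$-cell between two $1$-cells $\thU\to\thT'_0$ is, by definition, a $1$-cell $\thU\to(\thT'_0)^{\to}$, with its domain and codomain recovered by applying $\dom=(\Id,i_0)$ and $\cod=(\Id,i_1)$. By Lemma~\ref{lem:extArrow} the square~\eqref{eq:extArr}, obtained by applying $-^{\to}$ to~\eqref{eq:reindex}, is again a pullback in $\Con$; hence $1$-cells $\thU\to(\thT'_0)^{\to}$ correspond bijectively to pairs consisting of a $1$-cell $\thU\to\thT_0^{\to}$ and a $1$-cell $\thU\to(\thT'_1)^{\to}$ which agree after composition into $\thT_1^{\to}$ -- that is, to a $2$-cell into $\thT_0$ together with a $2$-cell into $\thT'_1$ becoming equal when whiskered into $\thT_1$. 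Since $\dom$ and $\cod$ are natural transformations $-^{\to}\Rightarrow\Id$, they intertwine the projections of~\eqref{eq:extArr} with those of~\eqref{eq:reindex}, so this bijection carries domains to domains and codomains to codomains, and $P_{\thU}$ is therefore bijective on morphisms as well.

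Finally one checks that $P_{\thU}$ preserves identities and vertical composition, after which it is an isomorphism of categories, visibly natural in $\thU$, and $\thT'_0$ with its two projections is the strict $2$-pullback. Identity $2$-cells are images of identity $1$-cells under the natural transformation $\Id\Rightarrow -^{\to}$ built from the unit laws, whose data is reindexing-compatible, so~\eqref{eq:extArr} being a pullback pins them down on all three corners. Vertical composition is computed via the pullback $(\thT'_0)^{\to\to}$ of $\dom$ and $\cod$ together with the composition transformation $-^{\to\to}\Rightarrow -^{\to}$; since $-^{\to\to}$ is obtained by iterating $-^{\to}$ (up to the same reordering isomorphism between $c^{\to}$ and a genuine extension that is already dealt with in the proof of Lemma~\ref{lem:extArrow}), a second application of Lemma~\ref{lem:extArrow} exhibits $(\thT'_0)^{\to\to}$ as the pullback of $\thT_0^{\to\to}$ and $(\thT'_1)^{\to\to}$ over $\thT_1^{\to\to}$, compatibly with the composition transformations; hence $P_{\thU}$ is functorial. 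I expect the genuine obstacle to be exactly this last bookkeeping step -- running the ``$-^{\to}$'' and ``$-^{\to\to}$'' reindexing squares in parallel and confirming that the domain/codomain and composition structure of the internal category $N(\thT'_0)$ really is transported from those of $N(\thT_0)$ and $N(\thT'_1)$ over $N(\thT_1)$ -- but its substance, that these higher arrow contexts differ from honest extensions only by a permutation of simple steps, has already been isolated in Lemma~\ref{lem:extArrow}.
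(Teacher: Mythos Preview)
Your approach is correct and is essentially the paper's own argument spelled out: the paper's proof simply cites Lemma~\ref{lem:ConpbOneCat} for the $1$-categorical pullback and Lemma~\ref{lem:extArrow} for the $2$-cell fillins, and you have unpacked exactly how the latter yields the bijection on $2$-cells via $1$-cells into the arrow contexts.

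One remark: the final step you flag as the ``genuine obstacle'' --- checking that $P_{\thU}$ preserves identities and vertical composition --- is not actually needed. The comparison $P_{\thU}$ is induced by postcomposition with the two projection $1$-cells, and postcomposition in a $2$-category is always a functor on hom-categories; so $P_{\thU}$ is functorial by construction, and once it is bijective on objects (Lemma~\ref{lem:ConpbOneCat}) and on morphisms (Lemma~\ref{lem:extArrow}, as you argue) it is an isomorphism of categories. Your parallel treatment of $-^{\to\to}$ is therefore unnecessary, though not wrong.
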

\begin{proof}
  Lemma~\ref{lem:ConpbOneCat} has already shown the 1-categorical form of this.
  It remains to show that we also have 2-cell fillins,
  and the ability to do this follows from Lemma~\ref{lem:extArrow}.
\end{proof}

\begin{lemma}\label{lem:ConFinProd}
  $\Con$ has all finite products.
\end{lemma}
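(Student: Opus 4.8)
The plan is to show that the empty sketch $\thone$ is terminal in $\Con$ and that, for contexts $\thT_0$ and $\thT_1$, the disjoint-union sketch $\thT_0\times\thT_1$ of Section~\ref{sec:SkExx} --- which is a context by Section~\ref{sec:ConExx} --- is their binary product; arbitrary finite products then follow by an easy associativity argument. The standing technique is that, given two context maps out of a context $\thU$ represented by opspans with equivalence extensions $e_0,e_1$, one reindexes $e_1$ along $e_0$ and composes to get a common refinement $e\colon\thU\skeqext\thU'$; since pushing an opspan $(e_i,f_i)$ forward along a refinement of $e_i$ yields an objectively equal (hence, in $\Con$, equal) map, we may assume both maps have the shape $(e,f)$ with the \emph{same} $e$, after which only the $\mathbf{Sk}_{\skhom}$-parts vary.

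\emph{Terminal object.} Since $\thone$ has no nodes and no edges, there is exactly one sketch homomorphism $\thone\skhom\thU$ for each context $\thU$, which gives a context map $\thU\to\thone$. Any context map $\thU\to\thone$ has the form $(e,f)$ with $f$ the (forced) empty homomorphism, and two such are trivially objectively equal since $\thone$ has no ingredients; thus $\Con(\thU,\thone)$ has a single object. Moreover the hom-sketch construction adds nothing to a sketch with no nodes or edges, so $\thone^{\to}=\thone$ with $i_0=i_1=\Id$; hence that unique map carries a unique $2$-cell, an identity, and $\thone$ is terminal also in the $2$-categorical sense.

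\emph{Binary product.} Take projections $p_i=(\Id,\iota_i)$, where $\iota_0,\iota_1$ are the disjoint-union inclusions $\thT_i\skhom\thT_0\times\thT_1$. Given $g_i\colon\thU\to\thT_i$, arranged as above so that $g_i=(e,f_i)$ with $f_i\colon\thT_i\skhom\thU'$, note that $\thT_0\times\thT_1$ is the coproduct of $\thT_0$ and $\thT_1$ in $\mathbf{Sk}_{\skhom}$: it is a coproduct of structures, and the copairing is unique in $\mathbf{Sk}_{\skhom}$ because every node and edge of $\thT_0\times\thT_1$ lies in the image of one of the $\iota_i$. Let $k\colon\thT_0\times\thT_1\skhom\thU'$ copair $f_0$ and $f_1$, and set $\langle g_0,g_1\rangle=(e,k)$. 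Computing $p_i\circ\langle g_0,g_1\rangle$ by the composition rule for context maps --- reindexing the trivial extension $\Id$ along $k$ being trivial --- gives $(e,\iota_i;k)=(e,f_i)=g_i$. Conversely, if $(e'',h)$ satisfies $p_i\circ(e'',h)=g_i$, then after passing to a common refinement of $e''$ and $e$ the homomorphism $\iota_i;h$ is objectively equal to $f_i$ for $i=0,1$; since $\thT_0\times\thT_1$ is a coproduct, a homomorphism out of it is determined up to objective equality by its restrictions along $\iota_0$ and $\iota_1$, so $(e'',h)$ is objectively equal to $\langle g_0,g_1\rangle$, which gives the $1$-categorical uniqueness.

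\emph{Two-dimensional property, and the one subtle point.} The hom-sketch construction distributes over disjoint union, so $(\thT_0\times\thT_1)^{\to}$ is \emph{isomorphic as a sketch} to $\thT_0^{\to}\times\thT_1^{\to}$, compatibly with $i_0$ and $i_1$ --- here, exactly as in Lemma~\ref{lem:extArrow}, only an isomorphism is available (not literal equality), because the simple-extension steps building it as a context occur in a permuted order; but a context map into a sketch depends on that sketch only up to isomorphism. A $2$-cell $\thU\to\thT_0\times\thT_1$ is a map $\thU\to(\thT_0\times\thT_1)^{\to}\cong\thT_0^{\to}\times\thT_1^{\to}$, hence --- by the $1$-categorical product property just proved, applied to $\thT_0^{\to}$ and $\thT_1^{\to}$ --- the same as a pair of maps $\thU\to\thT_i^{\to}$, i.e.\ a pair of $2$-cells; one then checks that this bijection respects domains and codomains (computed by composition with the $i_\lambda$), which supplies the copairing of $2$-cells and its uniqueness. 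The main thing needing care is precisely this permuted-order isomorphism together with the systematic matching of representatives via common refinements; I expect no genuine obstacle, the product being in essence just the union of presentations.
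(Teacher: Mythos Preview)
Your proof is correct, but it takes a more direct route than the paper's. The paper simply observes that the unique homomorphism $\thone\skhom\thT$ is an extension (by definition of context), so the map $\thT\to\thone$ in $\Con$ is an extension map; binary products are then pullbacks over the terminal object and fall out immediately from Theorem~\ref{thm:Conpb}, which has already established (2-categorical) pullbacks of extension maps along arbitrary maps. Your argument instead reproves this special case from scratch, using the coproduct structure of $\thT_0\times\thT_1$ in $\mathbf{Sk}_{\skhom}$ for the 1-dimensional universal property and the sketch isomorphism $(\thT_0\times\thT_1)^{\to}\cong\thT_0^{\to}\times\thT_1^{\to}$ for the 2-dimensional part. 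Both work; the paper's approach is shorter because it cashes in earlier investment (Lemma~\ref{lem:extArrow} and Theorem~\ref{thm:Conpb}), while yours is more self-contained and makes the product structure explicit --- in particular, your observation that object equalities out of a disjoint union can be assembled componentwise is the concrete content that the paper delegates to the general pullback machinery.
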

\begin{proof}
  The empty theory $\thone$ is initial in $\Con_{\skhom}$.
  After that one easily shows that it is terminal in $\Con$.

  The case for binary products follows from Theorem~\ref{thm:Conpb},
  since the unique homomorphism $\thone\skhom\thT$ is an extension.
\end{proof}

\subsubsection*{Inserters}
\newcommand{\skins}{\mathsf{Ins}}     

First, we work in $\Con_{\skhom}$ (or, dually, in $\Con_{\skhommap}$).
\begin{definition}\label{def:Insert}
  Let $f_{\lambda}\colon \thT_1 \skhom \thT_0$ ($\lambda=0,1$) be two context homomorphisms.
  Then we define an extension $c\colon\thT_0 \skext\skins(f_0,f_1)$ by adjoining:
  \begin{itemize}
  \item
    for every node $Y$ in $\thT_1$, an edge $\theta_Y\colon f_0(Y)\to f_1(Y)$; and
  \item
    for every edge $u\colon Y \to Y'$ in $\thT_1$, an edge $\theta_u$ and two commutativities
    \[
      \xymatrix{
        {f_0(Y)}
          \ar@{->}[r]^{\theta_Y}_{\bullet}
          \ar@{->}[dr]^{\theta_u}
          \ar@{->}[d]_{f_0(u)}
        & {f_1(Y)}
          \ar@{->}[d]^{f_1(u)}
        \\
        {f_0(Y')}
          \ar@{->}[r]_{\theta_{Y'}}^{\bullet}
        & {f_1(Y')}
      }
    \]
  \end{itemize}
\end{definition}
Obviously this generalizes the construction of $\thT^{\to}$ out of $\thT^2$.
We have two strictly commutative squares
\[
  \xymatrix{
    {\skins(f_0,f_1)}
    & {\thT_1^{\to}}
      \ar@{->}[l]_{\overline{f}}
    \\
    {\thT_0}
      \ar@{->}[u]^{c}
    & {\thT_1}
      \ar@{->}[l]^{f_{\lambda}}
      \ar@{->}[u]_{i_{\lambda}}
  }
\]
and in fact $\skins(f_0,f_1)$ is their joint pushout in $\Con_{\skhom}$.

To put this another way, left whiskering induces a bijection between
\begin{enumerate}
\item
  context homomorphisms $g'\colon\skins(f_0,f_1)\skhom\thU$, and
\item
  pairs $(g,\theta)$ where $g\colon\thT_0\skhom\thU$ is a context homomorphism,
  and $\theta\colon f_0 g \to f_1 g$ is a 2-cell.
\end{enumerate}

This very nearly also works at the level of 2-cells.
Consider two sketch homomorphisms $g'_{\mu}\colon\skins(f_0,f_1)\skhom\thU$
($\mu=0,1$),
corresponding to pairs $(g_{\mu}=cg'_{\mu},\theta_{\mu})$ as above,
and suppose we have a 2-cell $\alpha'\colon g'_0 \to g'_1$.
Considering the nodes and edges of $\skins(f_0,f_1)$,
we see that the edge data needed for $\alpha'$ comprises edges of the form
$\alpha'_{cX}$ and $\alpha'_{cu}$, for nodes and edges in $\thT_0$, and
$\alpha'_{\theta_{Y}}$ and $\alpha'_{\theta_{v}}$, for nodes and edges in $\thT_1$.
The first two kinds come along with commutativitites that make the whiskered 2-cell
$g_0 \to g_1$.
The last two kinds have commutativities
\[
  \xymatrix{
    {g'_0 c f_0 Y}
      \ar@{->}[r]^{\alpha'_{c f_0 Y}}_{\bullet}
      \ar@{->}[d]_{g'_0 \theta_{Y}}
      \ar@{->}[dr]^{\alpha'_{\theta_{Y}}}
    & {g'_1 c f_0 Y}
      \ar@{->}[d]^{g'_1 \theta_{Y}}
    \\
    {g'_0 c f_1 Y}
      \ar@{->}[r]_{\alpha'_{c f_1 Y}}^{\bullet}
    & {g'_1 c f_1 Y}
  }
  \quad
  \xymatrix{
    {g'_0 c f_0 Y}
      \ar@{->}[r]^{\alpha'_{c f_0 Y}}_{\bullet}
      \ar@{->}[d]_{g'_0 \theta_{v}}
      \ar@{->}[dr]^{\alpha'_{\theta_{v}}}
    & {g'_1 c f_0 Y}
      \ar@{->}[d]^{g'_1 \theta_{v}}
    \\
    {g'_0 c f_1 Y'}
      \ar@{->}[r]_{\alpha'_{c f_1 Y'}}^{\bullet}
    & {g'_1 c f_1 Y'}
  }
  \text{.}
\]
The first of these expresses that the $\alpha'_{\theta_{Y}}$s give the correct carrier
edges for the horizontal composition of $\alpha'$ and $\theta$.
The second is equivalent to saying that the $\alpha'_{\theta_{v}}$s give the correct
naturality diagonals for this horizontal composition,
in other words
\[
  \xymatrix{
    {g'_0 c f_0 Y}
      \ar@{->}[r]^{\alpha'_{\theta_{Y}}}_{\bullet}
      \ar@{->}[dr]^{\alpha'_{\theta_{v}}}
      \ar@{->}[d]_{g'_0 c f_0 v}
    & {g'_1 c f_1 Y}
      \ar@{->}[d]^{g'_1 c f_1 v}
    \\
    {g'_0 c f_0 Y'}
      \ar@{->}[r]_{\alpha'_{\theta_{Y}}}^{\bullet}
    & {g'_1 c f_1 Y'}
  }
  \text{,}
\]
but only modulo applications of associativity laws.

\begin{lemma}\label{lem:ConInsert}
  $\Con$ has inserters.
\end{lemma}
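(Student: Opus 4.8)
The plan is to take as the inserter object the context $\skins(f_0,f_1)$ of Definition~\ref{def:Insert}, with projection $p=(\Id,c)$ and inserter $2$-cell the context map $\iota=(\Id,\overline{f})\colon\skins(f_0,f_1)\to\thT_1^{\to}$; by the two strictly commuting squares recorded after that definition, $\iota$ has domain $p;(\Id,f_0)$ and codomain $p;(\Id,f_1)$, so it is a $2$-cell of the right shape. First I would reduce to the situation of Definition~\ref{def:Insert}, i.e.\ that the two given $1$-cells $\thT_0\to\thT_1$ in $\Con$ are of the form $(\Id,f_0)$ and $(\Id,f_1)$ for context homomorphisms $f_\lambda\colon\thT_1\skhom\thT_0$ with a common codomain. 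For this, write the $1$-cells as $(e_\lambda,\widetilde f_\lambda)$, pass to a common refinement $e$ of $e_0$ and $e_1$ so that both become $(e,f_\lambda)$ with the $f_\lambda$ sharing a codomain, and note that since $(\Id,e)$ is invertible in $\Con$ (Theorem~\ref{thm:ConUni}), the inserter of the original pair is obtained from that of $(\Id,f_0),(\Id,f_1)$ by composing the projection with $(\Id,e)$.

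For the $1$-dimensional universal property, suppose given a $1$-cell $g\colon\thU\to\thT_0$ and a $2$-cell $\mu$ from $g;(\Id,f_0)$ to $g;(\Id,f_1)$. Representing $g$ and $\mu$ over a common equivalence extension $e\colon\thU\skeqext\thU'$ gives a homomorphism $g_0\colon\thT_0\skhom\thU'$ and a homomorphism $\mu_0\colon\thT_1^{\to}\skhom\thU'$ which, by the conditions on the domain and codomain of $\mu$, is exactly a $2$-cell in $\Con_{\skhom}$ from $f_0;g_0$ to $f_1;g_0$. The bijection recorded just before the Lemma (that $\skins(f_0,f_1)$ is the joint pushout, in $\Con_{\skhom}$, of the two squares built from the $i_\lambda$ and the $f_\lambda$) then produces a unique homomorphism $g_0'\colon\skins(f_0,f_1)\skhom\thU'$ with $c;g_0'=g_0$ and $\overline{f};g_0'=\mu_0$; setting $h=(e,g_0')$ gives $h;p=g$ and, on whiskering $\iota$ by $h$, the required $2$-cell $\mu$. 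Any other fill-in, pulled back to a common refinement, is forced to agree with $g_0'$ by the uniqueness half of that bijection, so $h$ is unique modulo objective equality.

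The $2$-dimensional universal property is the substantive part, and it is exactly what the discussion preceding the Lemma is set up for. Given $h,k\colon\thU\to\skins(f_0,f_1)$ and a $2$-cell $\tau$ from $h;p$ to $k;p$ satisfying the inserter compatibility equation, I would descend to a common equivalence extension naming $h,k,\tau$ by homomorphisms $h_0,k_0\colon\skins(f_0,f_1)\skhom\thU'$ and $\tau_0\colon\thT_0^{\to}\skhom\thU'$. A $2$-cell $h_0\Rightarrow k_0$ in $\Con_{\skhom}$ whose whiskering along $c$ is $\tau$ is a homomorphism out of $\skins(f_0,f_1)^{\to}$; its action on the generators of $\skins(f_0,f_1)^{\to}$ inherited from $\thT_0$ is forced to be that of $\tau_0$, and what remains is the action on the edges $\theta_{\theta_Y}$ and $\theta_{\theta_v}$ adjoined by $c$. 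As worked out in the paragraphs before the Lemma, the two commutativities demanded of each such edge say precisely that a certain pair of composites of $\tau_0$ with $h_0$ and $k_0$ agree, and this pair of composites is exactly the two sides of the inserter compatibility equation for $\tau$; so the needed edge can be introduced as a composite, provided one is allowed to absorb instances of the associativity laws. Thus the construction of $\bar\tau=(e,\bar\tau_0)$ requires an equivalence extension, built in the style of Lemma~\ref{lem:eqExtTwoCell} and Proposition~\ref{prop:eqExtEqLogic} to supply the composites and associativity witnesses; once $\bar\tau$ is in hand, whiskering it by $p$ recovers $\tau$. Uniqueness modulo objective equality is again forced: the node components of $\bar\tau_0$ are pinned down by $h_0$ and $k_0$, and the edge components are determined up to unary commutativity by the uniqueness clauses for fill-ins. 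The genuine obstacle, then, is precisely this last step: matching the inserter's compatibility equation against the commutativities built into the sketch structure of $\skins(f_0,f_1)^{\to}$ and parking the unavoidable associativity bookkeeping inside an equivalence extension. (Alternatively, one may note that $\skins(f_0,f_1)$ is the pullback, by Lemma~\ref{lem:ConpbOneCat}, of the extension map $\langle i_0,i_1\rangle\colon\thT_1^{\to}\to\thT_1^2$ along $\langle f_0,f_1\rangle$, and invoke Theorem~\ref{thm:Conpb} and Lemma~\ref{lem:ConFinProd}; but checking that this pullback really is the inserter comes down to the same $2$-cell analysis.)
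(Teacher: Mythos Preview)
Your proposal is correct and follows essentially the same approach as the paper: reduce the two $1$-cells to homomorphisms $f_0,f_1$ into a common equivalence extension of $\thT_0$, take $\skins(f_0,f_1)$ as the inserter object with projection given by $c$ (composed with the inverse of the equivalence extension), use the joint-pushout description of $\skins(f_0,f_1)$ for the $1$-dimensional property, and handle the $2$-dimensional property via the analysis of $2$-cells out of $\skins(f_0,f_1)$ given just before the lemma, absorbing the associativity bookkeeping into an equivalence extension. You are more explicit than the paper about the inserter compatibility equation and how it matches the commutativities for $\alpha'_{\theta_Y}$ and $\alpha'_{\theta_v}$, but the content is the same.
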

\begin{proof}
  Suppose we have two maps from $\thT_0$ to $\thT_1$.
  We can represent them as homomorphisms into a single equivalence extension $\thT'_0$ of $\thT_0$.
  We shall show that $\thT_0\skeqext\thT'_0\skext\skins(f_0,f_1)$
  provides the inserter in $\Con$.

  In the following diagram we use arrows
  $\xymatrix@1{
    {}\ar@{(-}[r]^{(e,\Id)}
    &{,} \ar@{->}[r]^{(\Id,f)}
    &{,} \ar@{(->}[r]^{(e,f)} & {}
  }$
  for maps of the forms indicated.
  \[
    \xymatrix{
      {\thU'}
        \ar@{->}@/^1pc/[rr]
        \ar@{.>}[r]
      & {\skins(f_0,f_1)}
        \ar@{->}[r]_{c}
      & {\thT'_0}
        \ar@{->}[r]^{f_{\lambda}}
      & {\thT_1}
      \\
      {\thU}
        \ar@{(->}[rr]
        \ar@{(-}[u]
      & & {\thT_0}
        \ar@{(-}[u]_{e}
    }
  \]

  The map from $\skins(f_0,f_1)$ to $\thT_0$ is got by inverting the equivalence extension
  $e\colon\thT_0\skeqext\thT'_0$.

  Suppose we have a map from $\thU$ to $\thT_0$ and a 2-cell between its  composites with the $f_{\lambda}$s.
  By replacing $\thU$ by a suitable equivalence extension $\thU'$,
  we may assume that the 2-cell, between maps from $\thU'$ to $\thT_1$,
  is entirely in $\Con_{\skhommap}$ as in the above diagram,
  and we get a unique factorization $\thU' \to \skins(f_0,f_1)$ in $\Con_{\skhommap}$.
  This then gives us a unique factorization in $\Con$.

  The remarks before the lemma now enable us to extend this to 2-cells in the manner required for
  a weighted limit.
  (Now we need an equivalence extension of $\thU'$ for the associativities needed.)
\end{proof}

\subsubsection*{Equifiers}
\newcommand{\skeqf}{\mathsf{Eq}}     

Again, we start off in $\Con_{\skhommap}$.

\begin{definition}\label{def:Eq}
  Suppose we have two homomorphisms $\alpha,\beta\colon \thT_1^{\to} \skhom \thT_0$
  that, as 2-cells, have the same domain and codomain --
  $f_{\lambda} = i_{\lambda}\alpha = i_{\lambda} \beta$ ($\lambda=0,1$).
  (Equality is in the sense of agreeing on nodes and edges.)
  Then we define an extension $c\colon\thT_0\skext\skeqf(\alpha,\beta)$
  that adjoins unary commutativities $\alpha_Y\skdiag\beta_Y$ and $\alpha_v\skdiag\beta_v$
  for the nodes $Y$ and edges $v$ in $\thT_1$.
\end{definition}

A homomorphism $g'\colon\skeqf(\alpha,\beta)\skhom\thU$ is equivalent to a homomorphism
$g\colon\thT_0\skhom\thU$ such that $\alpha g$ and $\beta g$ are equal in the sense that
there are unary commutativities in $\thU$ equating the images under $g$ of the
$\theta_Y$s and the $\theta_v$s.

We can extend this precisely to 2-cells in $\Con_{\skhommap}$.
If $g'_{\lambda}$ are two homomorphisms from $\skeqf(\alpha,\beta)$ to $\thU$,
then a 2-cell from $g'_0$ to $g'_1$ is equivalent to a 2-cell from $cg'_0$ to $cg'_1$.

\begin{lemma}\label{lem:ConEquif}
  $\Con$ has equifiers.
\end{lemma}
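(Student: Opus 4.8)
The plan is to mirror the inserter case (Lemma~\ref{lem:ConInsert}), replacing the pushout construction $\skins$ by the construction $\skeqf$ of Definition~\ref{def:Eq}. Suppose we are given two parallel 2-cells $\alpha,\beta$ between 1-cells $f,g\colon\thT_0\to\thT_1$ of $\Con$. The first task is a reduction: pass to a single equivalence extension $\thT_0\skeqext\thT'_0$ large enough that $f$ and $g$ are represented by honest homomorphisms $f_0,f_1\colon\thT_1\skhom\thT'_0$ and $\alpha,\beta$ by homomorphisms $\alpha,\beta\colon\thT_1^{\to}\skhom\thT'_0$ with $i_\lambda\alpha=i_\lambda\beta=f_\lambda$ \emph{on the nose} (agreeing on nodes and edges), so that Definition~\ref{def:Eq} literally applies. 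To get there one takes a common refinement of the equivalence extensions carrying $\alpha$ and $\beta$, reindexes, and uses Lemma~\ref{lem:eqExtTwoCell} together with the fact (Proposition~\ref{prop:objEqER}) that the relevant boundary data are only objectively equal a priori; enlarging the equivalence extension further turns those objective equalities into genuine equalities. One then forms $c\colon\thT'_0\skext\skeqf(\alpha,\beta)$, notes that $\thT_0\skeqext\thT'_0\skext\skeqf(\alpha,\beta)$ is an extension, and takes as the structure map $\skeqf(\alpha,\beta)\to\thT_0$ the context map obtained from $(\Id,c)$ by inverting the equivalence extension $e\colon\thT_0\skeqext\thT'_0$. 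By construction $\alpha_Y\skdiag\beta_Y$ and $\alpha_v\skdiag\beta_v$ hold in $\skeqf(\alpha,\beta)$, so whiskering $\alpha$ and $\beta$ by this map yields equal 2-cells.

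For the 1-cell universal property, suppose $(e_\thU,h)\colon\thU\to\thT_0$ is a map whose whiskerings with $\alpha$ and $\beta$ are equal 2-cells into $\thT_1$. By the remark preceding the lemma, a homomorphism out of $\skeqf(\alpha,\beta)$ is the same as a homomorphism out of $\thT'_0$ (equivalently, out of $\thT_0$ up to the invertible $e$) under which the images of the carrier edges $\theta_Y$, $\theta_v$ become equal by unary commutativities. So, replacing $\thU$ by a suitable further equivalence extension to make the equality of the two whiskered 2-cells explicit by honest commutativities, we read off a homomorphism $\skeqf(\alpha,\beta)\skhom(\text{that extension})$, i.e.\ a factorization in $\Con_{\skhommap}$, which then induces one in $\Con$ via Theorem~\ref{thm:ConUni}. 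Uniqueness up to objective equality follows because any two such factorizations give the same data after composing with $c$, while $\skeqf(\alpha,\beta)$ is generated over $\thT'_0$ by commutativities only (no fresh nodes or edges).

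The 2-categorical part — that 2-cells into the cone correspond bijectively to 2-cells into $\thT_0$ whose whiskerings equalize the $\theta$'s — is exactly the ``extends precisely to 2-cells'' remark before the lemma: since $\skeqf(\alpha,\beta)$ adjoins only unary commutativities and no new edges, no associativity bookkeeping is needed (this is where the equifier is cleaner than the inserter). I expect the main obstacle to be the opening reduction rather than the universal property itself: in $\Con_{\skhommap}$ the pair $\alpha,\beta$ need not share a boundary strictly, nor even live over a common equivalence extension, so obtaining the hypotheses of Definition~\ref{def:Eq} requires a careful chase through common refinements, reindexing and Lemma~\ref{lem:eqExtTwoCell}, with the observation that all of these alter the candidate limit only up to (objective) isomorphism, which is harmless for a weighted limit in $\Con$.
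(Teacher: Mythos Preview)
Your overall architecture matches the paper's: reduce to the situation of Definition~\ref{def:Eq}, build $\skeqf(\alpha,\beta)$ over an equivalence extension of $\thT_0$, and then argue the universal property ``as for inserters'' using the remarks preceding the lemma. The 1-cell and 2-cell parts of the universal property are fine, and your observation that no associativity bookkeeping is needed (since $\skeqf$ adjoins only commutativities) is correct.

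The one place where your account does not work as written is the reduction step. You say that ``enlarging the equivalence extension further turns those objective equalities into genuine equalities''. It does not: equivalence extensions never identify distinct nodes, so if $i_{\lambda}\alpha$ and $i_{\lambda}\beta$ send some node $Y$ of $\thT_1$ to two different nodes of $\thT'_0$, no refinement will make them equal on the nose, and Definition~\ref{def:Eq} still fails to apply. Neither Lemma~\ref{lem:eqExtTwoCell} nor Proposition~\ref{prop:objEqER} helps here. The paper's fix is different and is exactly the missing idea: since object equalities are identity 2-cells in $\Con$, one \emph{vertically composes} one of the 2-cells with the object equalities witnessing $i_{\lambda}\alpha =_o i_{\lambda}\beta$, obtaining a new representative $\beta'$ (in a further equivalence extension) with $i_{\lambda}\beta' = i_{\lambda}\alpha$ strictly, while $\beta' = \beta$ as 2-cells in $\Con$. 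After that replacement, Definition~\ref{def:Eq} applies directly. Once you substitute this manoeuvre for your ``enlarge to make equalities genuine'' step, the rest of your argument goes through and coincides with the paper's.
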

\begin{proof}
  Suppose in $\Con$ we have two 2-cells between $\thT_0$ and $\thT_1$
  with equal domain and codomain.
  Then by taking common refinements,
  and vertically composing one of the 2-cells with object equalities,
  we can suppose without loss of generality that our 2-cells are given by
  a suitable equivalence extension $e\colon \thT_0\skeqext\thT'_0$
  and, entirely in $\Con_{\skhommap}$,
  two 2-cells between $\thT'_0$ and $\thT_1$ with equal domain and codomain.
  Then $\skeqf(\alpha,\beta)$, mapped through to $\thT_0$ using $(\Id,e)$,
  provides the equifier we seek.

  \[
    \xymatrix{
      {\skeqf(\alpha,\beta)}
        \ar@{->}[r]^-{c}
      & {\thT'_0}
        \ar@{->}@/^1pc/[rr]^{f_0}
        \ar@{}[rr]|{\Downarrow\alpha\Downarrow\beta}
        \ar@{->}@/_1pc/[rr]_{f_1}
      & & {\thT_1}
      \\
      & {\thT_0}
        \ar@{(-}[u]_{e}
    }
  \]
  The rest is similar to Lemma~\ref{lem:ConInsert}.
\end{proof}

\begin{theorem}\label{thm:PIE}
  $\Con$ has finite pie limits.
\end{theorem}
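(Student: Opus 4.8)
The plan is to assemble Theorem~\ref{thm:PIE} from the three pieces already in place. Recall that the class of \emph{finite pie limits} is generated, as weighted limits, by finite products, inserters, and equifiers (see~\cite{PowerRob:PIE}); indeed a 2-category has all finite pie limits as soon as it has those three. So the proof is essentially a one-line citation once we have collected Lemmas~\ref{lem:ConFinProd}, \ref{lem:ConInsert} and~\ref{lem:ConEquif}.

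First I would recall the characterization of pie limits: a weight is a \emph{pie weight} (product/inserter/equifier) precisely when it is a suitable colimit of representables built from the generating data, and the classical result of Power and Robinson is that a 2-category admitting finite products, inserters and equifiers admits all finite pie limits, constructed by iterating these. Then I would simply invoke the three lemmas: Lemma~\ref{lem:ConFinProd} gives finite products in $\Con$, Lemma~\ref{lem:ConInsert} gives inserters, and Lemma~\ref{lem:ConEquif} gives equifiers. One should be slightly careful to note, as the paper already flags in Section~\ref{sec:ConLimits}, that all these limits are meant in the \emph{strict} sense (strict cones), which is exactly the sense in which pie limits are treated in~\cite{PowerRob:PIE}; so there is no mismatch of conventions to reconcile.

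The only genuinely non-routine point is that the generation of pie limits from products, inserters and equifiers requires those basic limits to exist in a \emph{uniform} way, i.e. one must be able to take iterated limits of diagrams whose shape involves 2-cells. For inserters and equifiers in $\Con$ the constructions in Lemmas~\ref{lem:ConInsert} and~\ref{lem:ConEquif} already do the work of passing to equivalence extensions to realize the universal property at the level of 2-cells, and products are honest (Lemma~\ref{lem:ConFinProd} via Theorem~\ref{thm:Conpb}); since $\Con$ is a genuine 2-category (Theorem~\ref{thm:ConTwoCat}), the iteration goes through with no further fuss. I therefore expect the main obstacle to be purely bookkeeping: checking that the 2-dimensional universal properties of the three basic limits compose correctly, which is exactly what the ``extend to 2-cells'' clauses in the proofs of Lemmas~\ref{lem:ConInsert} and~\ref{lem:ConEquif} were set up to handle.

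\begin{proof}
  A 2-category has all finite pie limits as soon as it has finite products, inserters
  and equifiers, with the general pie limit built by iterating these three
  constructions~\cite{PowerRob:PIE}; here, as throughout Section~\ref{sec:ConLimits},
  the limits are understood strictly.
  By Lemma~\ref{lem:ConFinProd}, $\Con$ has finite products;
  by Lemma~\ref{lem:ConInsert}, it has inserters;
  and by Lemma~\ref{lem:ConEquif}, it has equifiers.
  Since $\Con$ is a 2-category (Theorem~\ref{thm:ConTwoCat}) and each of these three
  limit constructions was shown to satisfy the appropriate 2-dimensional universal
  property, the iterated construction of an arbitrary finite pie limit goes through.
\end{proof}
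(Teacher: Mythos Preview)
Your proposal is correct and follows essentially the same approach as the paper: the paper's proof is simply ``This is the combined content of Lemmas~\ref{lem:ConFinProd},~\ref{lem:ConInsert} and~\ref{lem:ConEquif}.'' Your version is slightly more explicit in invoking the Power--Robinson result that finite products, inserters and equifiers suffice to build all finite pie limits, but this is exactly what the paper intends by ``combined content'' together with its citation of~\cite{PowerRob:PIE} at the start of Section~\ref{sec:ConLimits}.
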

\begin{proof}
  This is the combined content of Lemmas~\ref{lem:ConFinProd},~\ref{lem:ConInsert}
  and~\ref{lem:ConEquif}.
\end{proof}

\section{A concrete construction of $\AUpres{\thT}$}\label{sec:ConcreteAUT}
We can define a 2-functor $\AUpres{-}\colon\Con\to\AU^{op}_s$,
acting on objects as $\thT\mapsto\AUpres{\thT}$.
(At the 1-category level this is immediate from Theorem~\ref{thm:ConUni},
using Proposition~\ref{prop:eqExtAUiso} and Lemma~\ref{lem:objeq}.)

The main result of this section, Theorem~\ref{thm:AUpresByEqExts},
is that this 2-functor is representable,
with $\AUpres{\thT}$ isomorphic to $\Con(\thT,\thob)$.
We also show, Theorem~\ref{thm:AUpres}, that it is full and faithful:
thus \emph{all} strict AU-functors between AUs of the form $\AUpres{\thT}$,
with $\thT$ a context, can be got by the finitary means of constructions in $\Con$.

Finally we shall show how the construction itself can be conducted entirely within
the logic of AUs.
This is in the spirit of the idea that AU constructions should be internalizable within AUs,
the idea that inspired Joyal's original use of them with regard to G\"odel's Theorem.

For the 2-cells, first note that $\AUpres{\thT^{\to}}$ is a tensor
$\mathbf{2}\otimes\AUpres{\thT}$ in $\AU_s$.
This is because a strict AU-functor $\AUpres{\thT^{\to}}\to\catA$
is equivalent to a strict model of $\thT^{\to}$ in $\catA$,
which is equivalent to a strict model of $\thT$ in $\catA\downarrow\catA$,
which is equivalent to a strict AU-functor $\AUpres{\thT} \to \catA\downarrow\catA$,
which is equivalent to a 2-cell between $\AUpres{\thT}$ and $\catA$
with domain and codomain both strict.

Hence $\AUpres{\thT^{\to}}$ is a cotensor $\mathbf{2}\pitchfork\AUpres{\thT}$
in $\AU_s^{op}$.
Thus we find that 2-cells in $\Con$, which are 1-cells to some $\thT^{\to}$,
are mapped to 2-cells in $\AU_s^{op}$,
and this preserves vertical and horizontal composition.

We next investigate the categories $\Con(\thT,\thob)$.
The objects and morphisms of this are the nodes and edges of equivalence extensions
of $\thT$, all modulo objective equality.

\begin{theorem}\label{thm:AUpresByEqExts}
  Let $\thT$ be a context.
  Then $\Con(\thT,\thob)$ is an AU freely presented by $\thT$,
  in other words $\AUpres{\thT} \cong \Con(\thT,\thob)$.
\end{theorem}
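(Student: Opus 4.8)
The plan is to establish the isomorphism by exhibiting $\Con(\thT,\thob)$ as an AU that enjoys the universal property characterizing $\AUpres{\thT}$: namely, for any AU $\catA$, strict AU-functors $\Con(\thT,\thob) \to \catA$ should correspond naturally to strict models of $\thT$ in $\catA$. Since $\AUpres{\thT}$ is \emph{defined} by this property (and exists by the cartesian-theory machinery of \cite{PHLCC}), it suffices to check that $\Con(\thT,\thob)$ carries an AU structure and has the right functor category. First I would verify that $\Con(\thT,\thob)$ is an AU. Its objects are nodes of equivalence extensions of $\thT$ modulo objective equality, its morphisms are edges modulo objective equality; a terminal object is given by the node of a terminal universal adjoined in an equivalence extension, pullbacks by pullback universals, initial object and pushouts dually, and list objects by list universals. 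The key point is that each such universal construction is available \emph{as an equivalence extension} (Definition~\ref{def:equivExt}), so is present up to the canonical isomorphisms of $\Con$, and the fillins and their uniqueness are exactly the ``fillin'' and ``uniqueness of fillin'' simple equivalence extension steps. Lemma~\ref{lem:objeq} and Proposition~\ref{prop:eqExtEqLogic} guarantee that objective equality is a congruence compatible with all this structure, so the quotient is a genuine AU, and indeed the constructions are strict (a pullback universal's projections satisfy the pullback equations on the nose in the extension).

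Next I would identify strict AU-functors out of $\Con(\thT,\thob)$ with strict models of $\thT$. A strict model $M$ of $\thT$ in $\catA$ extends uniquely to each equivalence extension $\thT \skeqext \thT'$ by Proposition~\ref{prop:eeModelExtn}, and since $\AUpres{\skeqext}$ is an isomorphism (Proposition~\ref{prop:eqExtAUiso}) these extensions are coherent; every node and edge of every equivalence extension thus acquires a well-defined interpretation in $\catA$, and objectively equal ones get equal interpretations (again Proposition~\ref{prop:eeModelExtn} together with the uniqueness of fillins underlying objective equality). This gives a functor $\Con(\thT,\thob) \to \catA$, strict because the AU structure on $\Con(\thT,\thob)$ is built from the universals whose interpretations are forced. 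Conversely, a strict AU-functor $F\colon \Con(\thT,\thob)\to\catA$ restricts along the generic model of $\thT$ sitting inside $\Con(\thT,\thob)$ (the nodes and edges of $\thT$ itself) to give a strict model of $\thT$ in $\catA$. These two assignments are mutually inverse and natural in $\catA$: this is essentially because a map $\thT \to \thob$ in $\Con$ is by Definition~\ref{def:contMorph} an opspan $(e,f)$ with $e$ an equivalence extension and $f$ a homomorphism picking out a single node, i.e. exactly a choice of ``derived object'', and $\Con$'s 1-cells into $\thob$ are thereby in bijection with the data of a node-valued construction over $\thT$.

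Finally, to upgrade the bijection of hom-sets to an isomorphism of \emph{categories} $\Con(\thT,\thob) \cong \AUpres{\thT}$, I would use the representability remarks already made in the surrounding text: $\AUpres{-}$ is a 2-functor $\Con \to \AU_s^{op}$, and the Yoneda-style argument says $\AUpres{\thT}$, as the AU classifying $\thT$, is isomorphic to the category whose objects and morphisms are ``generalized elements'' of the generic model — which is precisely $\Con(\thT,\thob)$ with its objectively-equal quotient. Concretely, one checks that an object of $\AUpres{\thT}$ (an equivalence class of AU-terms over the generators of $\thT$) corresponds to an equivalence class of nodes in equivalence extensions of $\thT$ via the fact (Definition~\ref{def:extn} and the proposition following it) that every node of an extension has a canonical AU-expression, and two such are objectively equal iff the AU axioms force their expressions equal; similarly for edges.

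I expect the main obstacle to be the faithful/bijective correspondence on \emph{morphisms} rather than objects: showing that two edges of equivalence extensions that become equal as morphisms in $\AUpres{\thT}$ are already objectively equal. This amounts to arguing that the equational logic of AUs on arrows (congruence closure, fillin uniqueness, the balance/exactness/stability rules) is entirely captured by the simple equivalence extension steps of Definition~\ref{def:equivExt} — which is the content of Proposition~\ref{prop:eqExtEqLogic} for the pure category fragment, but must be extended to cover every AU operator, using that each operator and each defining axiom has a matching equivalence-extension rule. Object equalities on the nodes (needed because AU-terms can denote equal objects built by identical universal constructions from objectively equal data) are handled by Lemma~\ref{lem:objeq}. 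Once both directions of ``equal in $\AUpres{\thT}$ $\Leftrightarrow$ objectively equal'' are in hand, the functor sending an AU-term to the corresponding node/edge class is well-defined, essentially surjective and fully faithful, hence the desired isomorphism.
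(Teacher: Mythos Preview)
Your proposal is correct and follows essentially the same route as the paper: show that $\Con(\thT,\thob)$ is an AU because every AU operation is realized by an equivalence extension (with object equalities and fillin-uniqueness rules ensuring canonicity), and then verify the universal property by extending any strict model $M$ of $\thT$ along equivalence extensions via Proposition~\ref{prop:eeModelExtn}. The paper's proof is just a terse version of your first two paragraphs.

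One remark on your final paragraph: the ``main obstacle'' you anticipate --- showing that two edges equal in $\AUpres{\thT}$ are already objectively equal --- is not actually needed once you go via the universal property. You have already argued that $\Con(\thT,\thob)$ is an AU and that strict models of $\thT$ in $\catA$ correspond to strict AU-functors $\Con(\thT,\thob)\to\catA$; that is precisely the defining universal property of $\AUpres{\thT}$, so the isomorphism follows by uniqueness of the freely presented AU, without ever comparing term syntax directly. The completeness of the equivalence-extension rules for the AU equational logic is thus a \emph{consequence} of the theorem rather than a lemma you must establish beforehand.
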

\begin{proof}
  All the AU constructions can be captured by equivalence extensions,
  and have the necessary properties.
  The rules of object equalities (for nodes) and fillin uniqueness (for edges)
  ensure that the constructions yield equals when applied to equals,
  and so have canonical representatives.
  Thus $\Con(\thT,\thob)$ is an AU.

  If $M$ is a strict model of $\thT$ in $\catA$,
  then any object or morphism in $\Con(\thT,\thob)$ gets a unique interpretation in $\catA$
  by model extension along the equivalence extension used.
  This respects objective equality,
  and so yields a well defined interpretation of the object or morphism.
\end{proof}

\begin{proposition}\label{prop:modelsAsProtosheaves}
  Let $\thT, \thT_0$ be contexts.
  If $(e,f)$ is a context map from $\thT$ to $\thT_0$,
  then the nodes and edges of $\thT_0$, translated along $f$,
  give a strict model of $\thT_0$ in $\Con(\thT,\thob)$.
  This induces a bijection between
  \begin{itemize}
  \item
    context maps from $\thT$ to $\thT_0$ (modulo objective equality), and
  \item
    strict models of $\thT_0$ in $\Con(\thT,\thob)$.
  \end{itemize}
\end{proposition}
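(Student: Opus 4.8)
The plan is to identify the map in the statement with the action of the 2-functor $\AUpres{-}\colon\Con\to\AU_s^{op}$ already available (from Theorem~\ref{thm:ConUni}), transported across the isomorphism $\AUpres{\thT}\cong\Con(\thT,\thob)$ of Theorem~\ref{thm:AUpresByEqExts} and the standard equivalence between strict AU-functors $\AUpres{\thT_0}\to\catA$ and strict models of $\thT_0$ in $\catA$. Concretely, given a context map $(e,f)\colon\thT\to\thT_0$ with $e\colon\thT\skeqext\thT'$ and $f\colon\thT_0\skhom\thT'$, the $f$-reduct (Definition~\ref{def:reduct}) of the generic model of $\thT'$ is a strict model of $\thT_0$ in $\AUpres{\thT'}$, and transporting it along the strict AU-isomorphism $\AUpres{e}^{-1}\colon\AUpres{\thT'}\to\AUpres{\thT}\cong\Con(\thT,\thob)$ (Proposition~\ref{prop:eqExtAUiso}) yields precisely ``the nodes and edges of $\thT_0$, translated along $f$'', now interpreted in $\Con(\thT,\thob)$. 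Since this is the composite of the functor $\AUpres{-}$ on $\Con$ with fixed identifications, well-definedness on objective-equality classes is automatic, and only injectivity and surjectivity remain to be shown.

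For surjectivity I would, given a strict model $M$ of $\thT_0$ in $\Con(\thT,\thob)$, build a representing $(e,f)$ by induction along a presentation of $\thT_0$ as a finite composite of simple extensions $\thone\skext\thU_1\skext\cdots\skext\thU_n=\thT_0$, defining $f$ on the fresh ingredients at each step while enlarging $\thT'$ by equivalence extensions of $\thT$. When a step adjoins a primitive node or edge, use that every object and morphism of $\Con(\thT,\thob)=\AUpres{\thT}$ is, by the concrete construction, represented by a node or edge of some equivalence extension, then correct the domain and codomain of the chosen edge to the already-assigned nodes by means of object equalities (invertible by Lemma~\ref{lem:objeq}(4)). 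When a step adjoins a commutativity, strictness of $M$ forces the corresponding equation in $\AUpres{\thT'}$, so by Proposition~\ref{prop:eqExtEqLogic} it is witnessed by a commutativity in a suitable equivalence extension. When a step adjoins a universal, adjoin the corresponding universal to $\thT'$ on the $f$-image of its data -- itself an equivalence extension by Definition~\ref{def:equivExt} -- and set $f$ on the new subjects equal to the subjects of that universal, which matches $M$ because $M$ is strict. Finiteness of $\thT_0$ lets us amalgamate the finitely many equivalence extensions used (reindex one along another, then compose) into a single $e\colon\thT\skeqext\thT'$, and by construction $(e,f)$ has associated model $M$.

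For injectivity, suppose $(e_\lambda,f_\lambda)$ $(\lambda=0,1)$ have the same associated strict model. Pass to a common refinement $e\colon\thT\skeqext\thT'$ via homomorphisms $\varepsilon_\lambda$, and put $g_\lambda=\varepsilon_\lambda\circ f_\lambda\colon\thT_0\skhom\thT'$; by uniqueness of model extension (Proposition~\ref{prop:eeModelExtn}) both given models are computed in $\AUpres{\thT'}$ as the interpretations of $g_0$ and $g_1$, so these agree on every node and edge of $\thT_0$. Via the concrete description $\AUpres{\thT'}\cong\Con(\thT',\thob)$, equality of the interpretation of a node (resp.\ edge) means $g_0$ and $g_1$ are objectively equal there; amalgamating the witnessing equivalence extensions over the finitely many nodes and edges, and assembling the node-wise object equalities into an object equality of homomorphisms using the edge clause of Definition~\ref{def:objEqGeneral} and Lemma~\ref{lem:objeq}(5) for consistency, we obtain $(e_0,f_0)=_o(e_1,f_1)$ by Definition~\ref{def:ConTwoCell}.

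The hard part will be the surjectivity induction: keeping $f$ a genuine sketch homomorphism at every step -- in particular threading the object-equality corrections of domains and codomains through all later steps, and matching the auxiliary limit/product data attached to list and other universals -- and checking that the amalgamation of equivalence extensions really yields one equivalence extension through which all of them factor compatibly, so that the final $(e,f)$ is a legitimate context map with interpretation exactly $M$.
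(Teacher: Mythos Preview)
Your proposal is correct and follows essentially the same approach as the paper, whose proof is a terse four-sentence sketch: injectivity because objective equality of context maps is determined by objective equality of their nodes and edges (which is precisely equality of the induced models in $\Con(\thT,\thob)$), and surjectivity because each datum of a strict model lives in some equivalence extension of $\thT$, finitely many of which have a common refinement $e$. Your induction along a presentation of $\thT_0$, your explicit domain/codomain corrections via Lemma~\ref{lem:objeq}(4), and your use of Lemma~\ref{lem:objeq}(5) to assemble the node-wise object equalities into a coherent 2-cell spell out exactly the details the paper leaves implicit; the worries in your final paragraph are real but routine, and are precisely what the paper's phrase ``the strict model can be expressed as a context map $(e,f)$'' sweeps under the rug.
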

\begin{proof}
  Objective equality of the context maps is determined solely by objective equalities
  for their nodes and edges, which is equality of the models in $\Con(\thT,\thob)$.
  Hence we have injectivity.

  For surjectivity, each piece of data for a strict model of $\thT_0$ is expressed in
  an equivalence extension of $\thT$.
  There are only finitely many of these, so they have a common refinement $e$, say,
  and then the strict model can be expressed as a context map $(e,f)$.
\end{proof}

\begin{theorem}\label{thm:AUpres}
  The 2-functor $\AUpres{-}$ is full and faithful on 1-cells and 2-cells.
\end{theorem}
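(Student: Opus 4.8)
The plan is to deduce both statements by chasing the natural bijections already in hand, most importantly the representability isomorphism $\AUpres{\thT}\cong\Con(\thT,\thob)$ of Theorem~\ref{thm:AUpresByEqExts} and the correspondence of Proposition~\ref{prop:modelsAsProtosheaves}. Recall that, by construction, $\AUpres{-}$ sends a context map $(e,f)\colon\thT_0\to\thT_1$ to the strict AU-functor $\AUpres{\thT_1}\to\AUpres{\thT_0}$ that, under the universal property of classifying AUs, classifies the strict model of $\thT_1$ in $\AUpres{\thT_0}$ obtained by first extending the generic model along $e$ (Proposition~\ref{prop:eeModelExtn}) and then reducing along $f$.

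First, faithfulness and fullness on $1$-cells. Fix contexts $\thT_0,\thT_1$; we must show the map $\Con(\thT_0,\thT_1)\to\AU_s(\AUpres{\thT_1},\AUpres{\thT_0})$ is a bijection. I would exhibit an inverse as the composite of three known bijections: by universal algebra (strict models of $\thT_1$ correspond to strict AU-functors out of $\AUpres{\thT_1}$), $\AU_s(\AUpres{\thT_1},\AUpres{\thT_0})\cong\Mods{\thT_1}{\AUpres{\thT_0}}$; by Theorem~\ref{thm:AUpresByEqExts} and functoriality of $\Mods{\thT_1}{-}$ under strict AU-functors (an AU-isomorphism induces an isomorphism of strict-model categories), this is $\cong\Mods{\thT_1}{\Con(\thT_0,\thob)}$; and by Proposition~\ref{prop:modelsAsProtosheaves} the latter is $\cong\Con(\thT_0,\thT_1)$. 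It then remains only to check that this composite is inverse to $\AUpres{-}$: unwinding the identification $\AUpres{\thT_0}\cong\Con(\thT_0,\thob)$, the strict model attached by Proposition~\ref{prop:modelsAsProtosheaves} to a context map $(e,f)$ — translate the nodes and edges of $\thT_1$ along $f$ into $\Con(\thT_0,\thob)$ — is exactly the model obtained by extending along $e$ and reducing along $f$, i.e.\ the model that $\AUpres{(e,f)}$ classifies.

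Second, the $2$-cell case. A $2$-cell in $\Con$ between $\thT_0$ and $\thT_1$ is, by Definition~\ref{def:ConTwoCell}, a $1$-cell $\thT_0\to\thT_1^{\to}$, with domain and codomain recovered by post-composition with $(\Id,i_0),(\Id,i_1)$. By the $1$-cell case, such $1$-cells correspond bijectively to strict AU-functors $\AUpres{\thT_1^{\to}}\to\AUpres{\thT_0}$; and since $\AUpres{\thT_1^{\to}}$ is the tensor $\mathbf{2}\otimes\AUpres{\thT_1}$ in $\AU_s$, equivalently the cotensor $\mathbf{2}\pitchfork\AUpres{\thT_1}$ in $\AU_s^{op}$ (as recalled before the statement), these correspond bijectively to $2$-cells in $\AU_s^{op}$ between $\AUpres{\thT_0}$ and $\AUpres{\thT_1}$, compatibly with domains and codomains (computed on the AU side by the two strict AU-functors $\AUpres{\thT_1}\rightrightarrows\AUpres{\thT_1^{\to}}$ induced by $i_0,i_1$). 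Hence $\AUpres{-}$ is bijective on $2$-cells. The only nonroutine point — and the one I expect to require care — is the coherence bookkeeping: verifying that these composite bijections genuinely implement the action of $\AUpres{-}$ rather than merely abstract isomorphisms, and that they respect vertical and horizontal composition of $2$-cells. For the latter it suffices, as already indicated in the text, to observe that $\AUpres{-}$ carries the internal category $N(\thT_1)$ in $\Con$ to the internal category witnessing the cotensor structure in $\AU_s^{op}$; checking the structure maps (identity, domain/codomain, composition) reduces to the $1$-cell statement applied to $\thT_1^{\to}$ and $\thT_1^{\to\to}$ together with the reindexing pullbacks of Lemma~\ref{lem:ConpbOneCat}.
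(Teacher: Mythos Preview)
Your proposal is correct and follows essentially the same route as the paper: the chain
\[
  \AU_s(\AUpres{\thT_1},\AUpres{\thT_0})
  \;\cong\;
  \Mods{\thT_1}{\AUpres{\thT_0}}
  \;\cong\;
  \Mods{\thT_1}{\Con(\thT_0,\thob)}
  \;\cong\;
  \Con(\thT_0,\thT_1)
\]
via the universal property, Theorem~\ref{thm:AUpresByEqExts}, and Proposition~\ref{prop:modelsAsProtosheaves}, followed by the reduction of the 2-cell case to 1-cells into $\thT_1^{\to}$ using the tensor/cotensor description of $\AUpres{\thT_1^{\to}}$. The paper's own proof is a two-sentence version of exactly this; your additional paragraph on checking that the composite bijection really is $\AUpres{-}$ and on the compatibility with vertical and horizontal composition makes explicit the coherence bookkeeping that the paper leaves implicit.
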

\begin{proof}
  Let $\thT_0$ and $\thT_1$ be contexts.
  Strict AU-functors $\AUpres{\thT_1} \to \AUpres{\thT_0}$ are equivalent to
  strict models of $\thT_1$ in $\AUpres{\thT_0} \cong \Con(\thT,\thob)$,
  and these are equivalent to 1-cells in $\Con$.

  The result for 2-cells follows by considering maps to arrow contexts $\thT_1^{\to}$.
\end{proof}

We now look at the concrete construction in AU logic.

Each kind $\sigma$ of simple extension or simple equivalence extension
takes some given \emph{data}, and produces a \emph{delta}.
The possible data are given by a functor $\extdat_{\sigma}$ from sketches to sets.
More carefully, an element of $\extdat_{\sigma}(\thT)$
is some finite tuple of elements of carriers in $\thT$,
subject to some equations.
Hence $\extdat_{\sigma}$ can be understood as an object of the cartesian classifying category
for the unary theory of sketches,
and for any sketch $\thT$ in a cartesian category $\catC$,
$\extdat_{\sigma}(\thT)$ is an object of $\catC$.
If the sketch $\thT$ is in an AU, then, for each element of $\extdat_{\sigma}(\thT)$,
the delta now gives us a proto-extension $\thT\skhom\thT'$.

Since there are only finitely many kinds of simple extension or simple equivalence extension,
in an AU we can sum over them and get
\[ \begin{split}
  \extdatse & \triangleq
    \sum \{ \extdat_{\sigma} \mid \sigma \text{ a kind of simple extension} \} \text{,}\\
  \extdatsee & \triangleq
    \sum \{ \extdat_{\sigma} \mid \sigma \text{ a kind of simple equivalence extension} \text{.} \}
\end{split} \]

Let us now restrict ourselves to strongly finite sketches,
in other words, sketches in the category $\Fin$ whose objects are natural numbers
and whose morphisms are functions between the corresponding finite cardinals.
This can be defined internally in any AU.
We obtain an internal graph $\Sk_{s\skext}$ whose nodes are strongly finite sketches $\thT$,
and whose edges are pairs $(\thT, e\in\extdatse(\thT))$ --
the source is $\thT$, the target is the corresponding simple extension $\thT'$.
Note that we can, and shall, choose the deltas in such a way that, for every carrier,
the corresponding carrier function for the extension is the natural inclusion
for some natural numbers $m\leq n$.
We write $\Sk_{\skext}$ for the path category of $\Sk_{s\skext}$,
its morphisms being the composable tuples of edges.
(Note that two different paths could still give the same extension.)

We can now take the contexts to be the targets of extensions
whose domains are the empty sketch $\thone$.

Next we do the same with equivalence extensions,
to obtain a graph $\Sk_{s\skeqext}$ and its path category $\Sk_{\skeqext}$.

Note that if $f\colon\thT_1 \skhom \thT_2$ then $f$ extends to a function
$\extdatse(\thT_1)\to\extdatse(\thT_2)$,
and so transforms any extension $c$ of $\thT_1$ into one $f(c)$ of $\thT_2$.
This is the reindexing, and it applies similarly to equivalence extensions.

From these ingredients we can now, internally in any AU,
define the 2-category $\Con$ and also, from any internal context $\thT$,
define $\Con(\thT,\thob)$ and hence $\AUpres{\thT}$.

\section{Conclusion}\label{sec:Conc}
The present paper has fulfilled a technical goal,
that of providing a finitary means of dealing with arbitrary strict AU-functors between
certain finitely presented AUs.

Many of the technical details are open to change.
It would be great, for instance, if a simpler characterization of AUs could be found.
Nonetheless, I believe the broad approach of sketches,
with equivalence extensions and object equalities,
has the potential for a more enduring usefulness.
One piece of necessary further work is to clarify the connection with the type theory for AUs
as set out in~\cite{Maietti:JAUviaTT}.

The present construction is surely a necessary technical first step
in pursuing the programme set out in~\cite{TopCat},
with its goal of providing a uniform, base-independent technique for proving
results about toposes as generalized spaces,
and even of providing a satisfactory alternative account of generalized spaces.

Over the years, experience with using geometric logic has shown that much of the argument
can be conducted with the ``arithmetic'' AU constraints.
An immediate direction of investigation is to attempt to express them within the
finitary formalism developed in the present paper.

Another pressing need is for a coherent account of the ``geometricity'' properties
of point-free hyperspaces and related constructions.
Current accounts such as that of~\cite{PPExp} prove that the constructions
are preserved up to isomorphism by pullback of bundles,
but do not express any coherence properties of those isomorphisms.
It is to be hoped that that will become clearer in the arithmetic account
when bundles are understood as extensions.
\section{Acknowledgements}
I am grateful to the organizers of the 5th Workshop on Formal Topology,
held at the Institute Mittag-Leffler, Stockholm, on 8-10 June 2015,
for the opportunity to outline the ideas of this paper there.

\bibliographystyle{amsalpha}
\bibliography{MyBiblio}
\end{document}